\newtheorem {axiom}[theorem]{Axiom}
\newtheorem {conjecture}[theorem]{Conjecture}
\newtheorem {example}[theorem]{Example}
\newtheorem {exercise}[theorem]{Exercise}
\newtheorem {remark}[theorem]{Remark}
\chardef\@x10\chardef\@xv60
\def\tcitime{
\def\@time{%
  \@minute\time\@hour\@minute\divide\@hour\@xv
  \ifnum\@hour<\@x 0\fi\the\@hour:%
  \multiply\@hour\@xv\advance\@minute-\@hour
  \ifnum\@minute<\@x 0\fi\the\@minute
  }}%
\def\QCTOpt[#1]#2{%
  \def\QCTOptB{#1}
  \def\QCTOptA{#2}
}
\def\QCTNOpt#1{%
  \def\QCTOptA{#1}
  \let\QCTOptB\empty
}
\def\Qct{%
  \@ifnextchar[{%
    \QCTOpt}{\QCTNOpt}
}
\def\QCBOpt[#1]#2{%
  \def\QCBOptB{#1}
  \def\QCBOptA{#2}
}
\def\QCBNOpt#1{%
  \def\QCBOptA{#1}
  \let\QCBOptB\empty
}
\def\Qcb{%
  \@ifnextchar[{%
    \QCBOpt}{\QCBNOpt}
}
\def\PrepCapArgs{%
  \ifx\QCBOptA\empty
    \ifx\QCTOptA\empty
      {}%
    \else
      \ifx\QCTOptB\empty
        {\QCTOptA}%
      \else
        [\QCTOptB]{\QCTOptA}%
      \fi
    \fi
  \else
    \ifx\QCBOptA\empty
      {}%
    \else
      \ifx\QCBOptB\empty
        {\QCBOptA}%
      \else
        [\QCBOptB]{\QCBOptA}%
      \fi
    \fi
  \fi
}
\def\GRAPHICSPS#1{%
 \ifcase\GRAPHICSTYPE
   \special{ps: #1}%
 \or
   \special{language "PS", include "#1"}%
 \fi
}%
\def\graffile#1#2#3#4{%
    \leavevmode
    \raise -#4 \BOXTHEFRAME{%
        \hbox to #2{\raise #3\hbox to #2{\null #1\hfil}}}%
}%
\def\draftbox#1#2#3#4{%
 \leavevmode\raise -#4 \hbox{%
  \frame{\rlap{\protect\tiny #1}\hbox to #2%
   {\vrule height#3 width\z@ depth\z@\hfil}%
  }%
 }%
}%
\newif\ifwasdraft
\def\GRAPHIC#1#2#3#4#5{%
 \ifnum\draft=\@ne\draftbox{#2}{#3}{#4}{#5}%
  \else\graffile{#1}{#3}{#4}{#5}%
  \fi
 }%
\def\addtoLaTeXparams#1{%
    \edef\LaTeXparams{\LaTeXparams #1}}%
\newif\ifBoxFrame \BoxFramefalse
\newif\ifOverFrame \OverFramefalse
\newif\ifUnderFrame \UnderFramefalse
\def\BOXTHEFRAME#1{%
   \hbox{%
      \ifBoxFrame
         \frame{#1}%
      \else
         {#1}%
      \fi
   }%
}
\def\doFRAMEparams#1{\BoxFramefalse\OverFramefalse\UnderFramefalse\readFRAMEparams#1\end}%
\def\readFRAMEparams#1{%
 \ifx#1\end%
  \let\next=\relax
  \else
  \ifx#1i\dispkind=\z@\fi
  \ifx#1d\dispkind=\@ne\fi
  \ifx#1f\dispkind=\tw@\fi
  \ifx#1t\addtoLaTeXparams{t}\fi
  \ifx#1b\addtoLaTeXparams{b}\fi
  \ifx#1p\addtoLaTeXparams{p}\fi
  \ifx#1h\addtoLaTeXparams{h}\fi
  \ifx#1X\BoxFrametrue\fi
  \ifx#1O\OverFrametrue\fi
  \ifx#1U\UnderFrametrue\fi
  \ifx#1w
    \ifnum\draft=1\wasdrafttrue\else\wasdraftfalse\fi
    \draft=\@ne
  \fi
  \let\next=\readFRAMEparams
  \fi
 \next
 }%
\def\IFRAME#1#2#3#4#5#6{%
      \bgroup
      \let\QCTOptA\empty
      \let\QCTOptB\empty
      \let\QCBOptA\empty
      \let\QCBOptB\empty
      #6%
      \parindent=0pt%
      \leftskip=0pt
      \rightskip=0pt
      \setbox0 = \hbox{\QCBOptA}%
      \@tempdima = #1\relax
      \ifOverFrame
          \typeout{This is not implemented yet}%
          \show\HELP
      \else
         \ifdim\wd0>\@tempdima
            \advance\@tempdima by \@tempdima
            \ifdim\wd0 >\@tempdima
               \textwidth=\@tempdima
               \setbox1 =\vbox{%
                  \noindent\hbox to \@tempdima{\hfill\GRAPHIC{#5}{#4}{#1}{#2}{#3}\hfill}\\%
                  \noindent\hbox to \@tempdima{\parbox[b]{\@tempdima}{\QCBOptA}}%
               }%
               \wd1=\@tempdima
            \else
               \textwidth=\wd0
               \setbox1 =\vbox{%
                 \noindent\hbox to \wd0{\hfill\GRAPHIC{#5}{#4}{#1}{#2}{#3}\hfill}\\%
                 \noindent\hbox{\QCBOptA}%
               }%
               \wd1=\wd0
            \fi
         \else
            \ifdim\wd0>0pt
              \hsize=\@tempdima
              \setbox1 =\vbox{%
                \unskip\GRAPHIC{#5}{#4}{#1}{#2}{0pt}%
                \break
                \unskip\hbox to \@tempdima{\hfill \QCBOptA\hfill}%
              }%
              \wd1=\@tempdima
           \else
              \hsize=\@tempdima
              \setbox1 =\vbox{%
                \unskip\GRAPHIC{#5}{#4}{#1}{#2}{0pt}%
              }%
              \wd1=\@tempdima
           \fi
         \fi
         \@tempdimb=\ht1
         \advance\@tempdimb by \dp1
         \advance\@tempdimb by -#2%
         \advance\@tempdimb by #3%
         \leavevmode
         \raise -\@tempdimb \hbox{\box1}%
      \fi
      \egroup%
}%
\def\DFRAME#1#2#3#4#5{%
 \begin{center}
     \let\QCTOptA\empty
     \let\QCTOptB\empty
     \let\QCBOptA\empty
     \let\QCBOptB\empty
     \ifOverFrame 
        #5\QCTOptA\par
     \fi
     \GRAPHIC{#4}{#3}{#1}{#2}{\z@}
     \ifUnderFrame 
        \nobreak\par #5\QCBOptA
     \fi
 \end{center}%
 }%
\def\FFRAME#1#2#3#4#5#6#7{%
 \begin{figure}[#1]%
  \let\QCTOptA\empty
  \let\QCTOptB\empty
  \let\QCBOptA\empty
  \let\QCBOptB\empty
  \ifOverFrame
    #4
    \ifx\QCTOptA\empty
    \else
      \ifx\QCTOptB\empty
        \caption{\QCTOptA}%
      \else
        \caption[\QCTOptB]{\QCTOptA}%
      \fi
    \fi
    \ifUnderFrame\else
      \label{#5}%
    \fi
  \else
    \UnderFrametrue%
  \fi
  \begin{center}\GRAPHIC{#7}{#6}{#2}{#3}{\z@}\end{center}%
  \ifUnderFrame
    #4
    \ifx\QCBOptA\empty
      \caption{}%
    \else
      \ifx\QCBOptB\empty
        \caption{\QCBOptA}%
      \else
        \caption[\QCBOptB]{\QCBOptA}%
      \fi
    \fi
    \label{#5}%
  \fi
  \end{figure}%
 }%
\def\makeactives{
  \catcode`\"=\active
  \catcode`\;=\active
  \catcode`\:=\active
  \catcode`\'=\active
  \catcode`\~=\active
}
   \gdef\activesoff{%
      \def"{\string"}
      \def;{\string;}
      \def:{\string:}
      \def'{\string'}
      \def~{\string~}
    }
\def\FRAME#1#2#3#4#5#6#7#8{%
 \bgroup
 \@ifundefined{bbl@deactivate}{}{\activesoff}
 \ifnum\draft=\@ne
   \wasdrafttrue
 \else
   \wasdraftfalse%
 \fi
 \def\LaTeXparams{}%
 \dispkind=\z@
 \def\LaTeXparams{}%
 \doFRAMEparams{#1}%
 \ifnum\dispkind=\z@\IFRAME{#2}{#3}{#4}{#7}{#8}{#5}\else
  \ifnum\dispkind=\@ne\DFRAME{#2}{#3}{#7}{#8}{#5}\else
   \ifnum\dispkind=\tw@
    \edef\@tempa{\noexpand\FFRAME{\LaTeXparams}}%
    \@tempa{#2}{#3}{#5}{#6}{#7}{#8}%
    \fi
   \fi
  \fi
  \ifwasdraft\draft=1\else\draft=0\fi{}%
  \egroup
 }%
\def\TEXUX#1{"texux"}
\long\def\QQQ#1#2{%
     \long\expandafter\def\csname#1\endcsname{#2}}%
\long\def\QQA#1#2{}%
\def\QTR#1#2{{\csname#1\endcsname #2}}
\def\EXPAND#1[#2]#3{}%
\def\NOEXPAND#1[#2]#3{}%
\def\LaTeXparent#1{}%
\def\ChildStyles#1{}%
\def\ChildDefaults#1{}%
\def\QTagDef#1#2#3{}%
\def\QQfnmark#1{\footnotemark}
\def\makeatletter\input gnuindex.sty\makeatother\makeindex{\makeatletter\input gnuindex.sty\makeatother\makeindex}%
\def\initial#1{\bigbreak{\raggedright\large\bf #1}\kern 2\p@\penalty3000}}%
 \def\abstract{%
  \if@twocolumn
   \section*{Abstract (Not appropriate in this style!)}%
   \else \small 
   \begin{center}{\bf Abstract\vspace{-.5em}\vspace{\z@}}\end{center}%
   \quotation 
   \fi
  }%
   \def\registered{\relax\ifmmode{}\r@gistered
                    \else$\m@th\r@gistered$\fi}%
 \def\r@gistered{^{\ooalign
  {\hfil\raise.07ex\hbox{$\scriptstyle\rm\text{R}$}\hfil\crcr
  \mathhexbox20D}}}}{}%
\newdimen\theight
\def\Column{%
 \vadjust{\setbox\z@=\hbox{\scriptsize\quad\quad tcol}%
  \theight=\ht\z@\advance\theight by \dp\z@\advance\theight by \lineskip
  \kern -\theight \vbox to \theight{%
   \rightline{\rlap{\box\z@}}%
   \vss
   }%
  }%
 }%
\def\qed{%
 \ifhmode\unskip\nobreak\fi\ifmmode\ifinner\else\hskip5\p@\fi\fi
 \hbox{\hskip5\p@\vrule width4\p@ height6\p@ depth1.5\p@\hskip\p@}%
 }%
\def\miss{\hbox{\vrule height2\p@ width 2\p@ depth\z@}}%
\def\tcol#1{{\baselineskip=6\p@ \vcenter{#1}} \Column}  %
\def\newfmtname{LaTeX2e}
\def\chkcompat{%
   \if@compatibility
   \else
     \usepackage{latexsym}
   \fi
}
  \DeclareOldFontCommand{\rm}{\normalfont\rmfamily}{\mathrm}
  \DeclareOldFontCommand{\sf}{\normalfont\sffamily}{\mathsf}
  \DeclareOldFontCommand{\tt}{\normalfont\ttfamily}{\mathtt}
  \DeclareOldFontCommand{\bf}{\normalfont\bfseries}{\mathbf}
  \DeclareOldFontCommand{\it}{\normalfont\itshape}{\mathit}
  \DeclareOldFontCommand{\sl}{\normalfont\slshape}{\@nomath\sl}
  \DeclareOldFontCommand{\sc}{\normalfont\scshape}{\@nomath\sc}
\def\alpha{\Greekmath 010B }%
\def\beta{\Greekmath 010C }%
\def\gamma{\Greekmath 010D }%
\def\delta{\Greekmath 010E }%
\def\epsilon{\Greekmath 010F }%
\def\zeta{\Greekmath 0110 }%
\def\eta{\Greekmath 0111 }%
\def\theta{\Greekmath 0112 }%
\def\iota{\Greekmath 0113 }%
\def\kappa{\Greekmath 0114 }%
\def\lambda{\Greekmath 0115 }%
\def\mu{\Greekmath 0116 }%
\def\nu{\Greekmath 0117 }%
\def\xi{\Greekmath 0118 }%
\def\pi{\Greekmath 0119 }%
\def\rho{\Greekmath 011A }%
\def\sigma{\Greekmath 011B }%
\def\tau{\Greekmath 011C }%
\def\upsilon{\Greekmath 011D }%
\def\phi{\Greekmath 011E }%
\def\chi{\Greekmath 011F }%
\def\psi{\Greekmath 0120 }%
\def\omega{\Greekmath 0121 }%
\def\varepsilon{\Greekmath 0122 }%
\def\vartheta{\Greekmath 0123 }%
\def\varpi{\Greekmath 0124 }%
\def\varrho{\Greekmath 0125 }%
\def\varsigma{\Greekmath 0126 }%
\def\varphi{\Greekmath 0127 }%
\def\nabla{\Greekmath 0272 }
\def\FindBoldGroup{%
   {\setbox0=\hbox{$\mathbf{x\global\edef\theboldgroup{\the\mathgroup}}$}}%
}
\def\Greekmath#1#2#3#4{%
    \if@compatibility
        \ifnum\mathgroup=\symbold
           \mathchoice{\mbox{\boldmath$\displaystyle\mathchar"#1#2#3#4$}}%
                      {\mbox{\boldmath$\textstyle\mathchar"#1#2#3#4$}}%
                      {\mbox{\boldmath$\scriptstyle\mathchar"#1#2#3#4$}}%
                      {\mbox{\boldmath$\scriptscriptstyle\mathchar"#1#2#3#4$}}%
        \else
           \mathchar"#1#2#3#4%
        \fi 
    \else 
        \FindBoldGroup
        \ifnum\mathgroup=\theboldgroup 
           \mathchoice{\mbox{\boldmath$\displaystyle\mathchar"#1#2#3#4$}}%
                      {\mbox{\boldmath$\textstyle\mathchar"#1#2#3#4$}}%
                      {\mbox{\boldmath$\scriptstyle\mathchar"#1#2#3#4$}}%
                      {\mbox{\boldmath$\scriptscriptstyle\mathchar"#1#2#3#4$}}%
        \else
           \mathchar"#1#2#3#4%
        \fi     	    
	  \fi}
\newif\ifGreekBold  \GreekBoldfalse
\let\SAVEPBF=\pbf
\def\pbf{\GreekBoldtrue\SAVEPBF}%
  \newcounter{equationnumber}  
  \def\mathletters{%
     \addtocounter{equation}{1}
     \edef\@currentlabel{\theequation}%
     \setcounter{equationnumber}{\c@equation}
     \setcounter{equation}{0}%
     \edef\theequation{\@currentlabel\noexpand\alph{equation}}%
  }
    \def\BibTeX{{\rm B\kern-.05em{\sc i\kern-.025em b}\kern-.08em
                 T\kern-.1667em\lower.7ex\hbox{E}\kern-.125emX}}}{}%
\def\AmS{{\protect\usefont{OMS}{cmsy}{m}{n}%
                A\kern-.1667em\lower.5ex\hbox{M}\kern-.125emS}}}{}%
\let\DOTSI\relax
\def\RIfM@{\relax\ifmmode}%
\def\FN@{\futurelet\next}%
\def\iint{\DOTSI\intno@\tw@\FN@\ints@}%
\def\iiint{\DOTSI\intno@\thr@@\FN@\ints@}%
\def\iiiint{\DOTSI\intno@4 \FN@\ints@}%
\def\idotsint{\DOTSI\intno@\z@\FN@\ints@}%
\def\ints@{\findlimits@\ints@@}%
\newif\iflimtoken@
\newif\iflimits@
\def\findlimits@{\limtoken@true\ifx\next\limits\limits@true
 \else\ifx\next\nolimits\limits@false\else
 \limtoken@false\ifx\ilimits@\nolimits\limits@false\else
 \ifinner\limits@false\else\limits@true\fi\fi\fi\fi}%
\def\multint@{\int\ifnum\intno@=\z@\intdots@                          
 \else\intkern@\fi                                                    
 \ifnum\intno@>\tw@\int\intkern@\fi                                   
 \ifnum\intno@>\thr@@\int\intkern@\fi                                 
 \int}
\def\multintlimits@{\intop\ifnum\intno@=\z@\intdots@\else\intkern@\fi
 \ifnum\intno@>\tw@\intop\intkern@\fi
 \ifnum\intno@>\thr@@\intop\intkern@\fi\intop}%
\def\intic@{%
    \mathchoice{\hskip.5em}{\hskip.4em}{\hskip.4em}{\hskip.4em}}%
\def\negintic@{\mathchoice
 {\hskip-.5em}{\hskip-.4em}{\hskip-.4em}{\hskip-.4em}}%
\def\ints@@{\iflimtoken@                                              
 \def\ints@@@{\iflimits@\negintic@
   \mathop{\intic@\multintlimits@}\limits                             
  \else\multint@\nolimits\fi                                          
  \eat@}
 \else                                                                
 \def\ints@@@{\iflimits@\negintic@
  \mathop{\intic@\multintlimits@}\limits\else
  \multint@\nolimits\fi}\fi\ints@@@}%
\def\intkern@{\mathchoice{\!\!\!}{\!\!}{\!\!}{\!\!}}%
\def\plaincdots@{\mathinner{\cdotp\cdotp\cdotp}}%
\def\intdots@{\mathchoice{\plaincdots@}%
 {{\cdotp}\mkern1.5mu{\cdotp}\mkern1.5mu{\cdotp}}%
 {{\cdotp}\mkern1mu{\cdotp}\mkern1mu{\cdotp}}%
 {{\cdotp}\mkern1mu{\cdotp}\mkern1mu{\cdotp}}}%
\def\RIfM@{\relax\protect\ifmmode}
\def\text{\RIfM@\expandafter\text@\else\expandafter\mbox\fi}
\let\nfss@text\text
\def\text@#1{\mathchoice
   {\textdef@\displaystyle\f@size{#1}}%
   {\textdef@\textstyle\tf@size{\firstchoice@false #1}}%
   {\textdef@\textstyle\sf@size{\firstchoice@false #1}}%
   {\textdef@\textstyle \ssf@size{\firstchoice@false #1}}%
   \glb@settings}
\def\textdef@#1#2#3{\hbox{{%
                    \everymath{#1}%
                    \let\f@size#2\selectfont
                    #3}}}
\newif\iffirstchoice@
\def\Let@{\relax\iffalse{\fi\let\\=\cr\iffalse}\fi}%
\def\vspace@{\def\vspace##1{\crcr\noalign{\vskip##1\relax}}}%
\def\multilimits@{\bgroup\vspace@\Let@
 \baselineskip\fontdimen10 \scriptfont\tw@
 \advance\baselineskip\fontdimen12 \scriptfont\tw@
 \lineskip\thr@@\fontdimen8 \scriptfont\thr@@
 \lineskiplimit\lineskip
 \vbox\bgroup\ialign\bgroup\hfil$\m@th\scriptstyle{##}$\hfil\crcr}%
\def\Sb{_\multilimits@}%
\def\endSb{\crcr\egroup\egroup\egroup}%
\def\Sp{^\multilimits@}%
\newdimen\ex@
\def\rightarrowfill@#1{$#1\m@th\mathord-\mkern-6mu\cleaders
 \hbox{$#1\mkern-2mu\mathord-\mkern-2mu$}\hfill
 \mkern-6mu\mathord\rightarrow$}%
\def\leftarrowfill@#1{$#1\m@th\mathord\leftarrow\mkern-6mu\cleaders
 \hbox{$#1\mkern-2mu\mathord-\mkern-2mu$}\hfill\mkern-6mu\mathord-$}%
\def\leftrightarrowfill@#1{$#1\m@th\mathord\leftarrow
\mkern-6mu\cleaders
 \hbox{$#1\mkern-2mu\mathord-\mkern-2mu$}\hfill
 \mkern-6mu\mathord\rightarrow$}%
\def\overrightarrow{\mathpalette\overrightarrow@}%
\def\overrightarrow@#1#2{\vbox{\ialign{##\crcr\rightarrowfill@#1\crcr
 \noalign{\kern-\ex@\nointerlineskip}$\m@th\hfil#1#2\hfil$\crcr}}}%
\def\overleftarrow{\mathpalette\overleftarrow@}%
\def\overleftarrow@#1#2{\vbox{\ialign{##\crcr\leftarrowfill@#1\crcr
 \noalign{\kern-\ex@\nointerlineskip}$\m@th\hfil#1#2\hfil$\crcr}}}%
\def\overleftrightarrow{\mathpalette\overleftrightarrow@}%
\def\overleftrightarrow@#1#2{\vbox{\ialign{##\crcr
   \leftrightarrowfill@#1\crcr
 \noalign{\kern-\ex@\nointerlineskip}$\m@th\hfil#1#2\hfil$\crcr}}}%
\def\underrightarrow{\mathpalette\underrightarrow@}%
\def\underrightarrow@#1#2{\vtop{\ialign{##\crcr$\m@th\hfil#1#2\hfil
  $\crcr\noalign{\nointerlineskip}\rightarrowfill@#1\crcr}}}%
\def\underleftarrow{\mathpalette\underleftarrow@}%
\def\underleftarrow@#1#2{\vtop{\ialign{##\crcr$\m@th\hfil#1#2\hfil
  $\crcr\noalign{\nointerlineskip}\leftarrowfill@#1\crcr}}}%
\def\underleftrightarrow{\mathpalette\underleftrightarrow@}%
\def\underleftrightarrow@#1#2{\vtop{\ialign{##\crcr$\m@th
  \hfil#1#2\hfil$\crcr
 \noalign{\nointerlineskip}\leftrightarrowfill@#1\crcr}}}%
\def\qopnamewl@#1{\mathop{\operator@font#1}\nlimits@}
\let\nlimits@\displaylimits
\def\setboxz@h{\setbox\z@\hbox}
\def\varlim@#1#2{\mathop{\vtop{\ialign{##\crcr
 \hfil$#1\m@th\operator@font lim$\hfil\crcr
 \noalign{\nointerlineskip}#2#1\crcr
 \noalign{\nointerlineskip\kern-\ex@}\crcr}}}}
 \def\rightarrowfill@#1{\m@th\setboxz@h{$#1-$}\ht\z@\z@
  $#1\copy\z@\mkern-6mu\cleaders
  \hbox{$#1\mkern-2mu\box\z@\mkern-2mu$}\hfill
  \mkern-6mu\mathord\rightarrow$}
\def\leftarrowfill@#1{\m@th\setboxz@h{$#1-$}\ht\z@\z@
  $#1\mathord\leftarrow\mkern-6mu\cleaders
  \hbox{$#1\mkern-2mu\copy\z@\mkern-2mu$}\hfill
  \mkern-6mu\box\z@$}
\def\projlim{\qopnamewl@{proj\,lim}}
\def\injlim{\qopnamewl@{inj\,lim}}
\def\varinjlim{\mathpalette\varlim@\rightarrowfill@}
\def\varprojlim{\mathpalette\varlim@\leftarrowfill@}
\def\varliminf{\mathpalette\varliminf@{}}
\def\varliminf@#1{\mathop{\underline{\vrule\@depth.2\ex@\@width\z@
   \hbox{$#1\m@th\operator@font lim$}}}}
\def\varlimsup{\mathpalette\varlimsup@{}}
\def\varlimsup@#1{\mathop{\overline
  {\hbox{$#1\m@th\operator@font lim$}}}}
\def\align{\@verbatim \frenchspacing\@vobeyspaces \@alignverbatim
You are using the "align" environment in a style in which it is not defined.}
\let\csname endalign*\endcsname =\endtrivlist
\def\alignat{\@verbatim \frenchspacing\@vobeyspaces \@alignatverbatim
You are using the "alignat" environment in a style in which it is not defined.}
\let\csname endalignat*\endcsname =\endtrivlist
\def\xalignat{\@verbatim \frenchspacing\@vobeyspaces \@xalignatverbatim
You are using the "xalignat" environment in a style in which it is not defined.}
\let\csname endxalignat*\endcsname =\endtrivlist
\def\gather{\@verbatim \frenchspacing\@vobeyspaces \@gatherverbatim
You are using the "gather" environment in a style in which it is not defined.}
\let\csname endgather*\endcsname =\endtrivlist
\def\multiline{\@verbatim \frenchspacing\@vobeyspaces \@multilineverbatim
You are using the "multiline" environment in a style in which it is not defined.}
\let\csname endmultiline*\endcsname =\endtrivlist
\def\arrax{\@verbatim \frenchspacing\@vobeyspaces \@arraxverbatim
You are using a type of "array" construct that is only allowed in AmS-LaTeX.}
\def\tabulax{\@verbatim \frenchspacing\@vobeyspaces \@tabulaxverbatim
You are using a type of "tabular" construct that is only allowed in AmS-LaTeX.}
\let\csname endarrax*\endcsname =\endtrivlist
\let\csname endtabulax*\endcsname =\endtrivlist
\def\@@eqncr{\let\@tempa\relax
    \ifcase\@eqcnt \def\@tempa{& & &}\or \def\@tempa{& &}%
      \else \def\@tempa{&}\fi
     \@tempa
     \if@eqnsw
        \iftag@
           \@taggnum
        \else
           \@eqnnum\stepcounter{equation}%
        \fi
     \fi
     \global\tag@false
     \global\@eqnswtrue
     \global\@eqcnt\z@\cr}
 \def\endequation{%
     \ifmmode\ifinner 
      \iftag@
        \addtocounter{equation}{-1} 
        $\hfil
           \displaywidth\linewidth\@taggnum\egroup \endtrivlist
        \global\tag@false
        \global\@ignoretrue   
      \else
        $\hfil
           \displaywidth\linewidth\@eqnnum\egroup \endtrivlist
        \global\tag@false
        \global\@ignoretrue 
      \fi
     \else   
      \iftag@
        \addtocounter{equation}{-1} 
        \eqno \hbox{\@taggnum}
        \global\tag@false%
        $$\global\@ignoretrue
      \else
        \eqno \hbox{\@eqnnum}
        $$\global\@ignoretrue
      \fi
     \fi\fi
 } 
 \newif\iftag@ \tag@false
 \def\tag{\@ifnextchar*{\@tagstar}{\@tag}}
 \def\@tag#1{%
     \global\tag@true
     \global\def\@taggnum{(#1)}}
 \def\@tagstar*#1{%
     \global\tag@true
     \global\def\@taggnum{#1}%
}
\begin{document}

\title{Analysis of variable-step/non-autonomous artificial compression
methods}
\author{Robin Ming Chen \thanks{%
Department of Mathematics, University of Pittsburgh, Pittsburgh, PA 15260,
USA, mingchen@pitt.edu; partially supported by NSF Grant DMS 1613375.} \and William Layton \thanks{%
Department of Mathematics, University of Pittsburgh, Pittsburgh, PA 15260,
USA, wjl@pitt.edu; The research herein was partially supported by NSF\
grants DMS1522267, 1817542 and CBET 1609120.} \and Michael Mclaughlin 
\thanks{%
Department of Mathematics, University of Pittsburgh, Pittsburgh, PA 15260,
USA, mem266@pitt.edu; partially supported by NSF\ grants DMS1522267 and
1817542.}}
\date{11/23/2017}
\maketitle

\begin{abstract}
A standard artificial compression (AC) method for incompressible flow is 
\begin{gather*}
\frac{u_{n+1}^{\varepsilon }-u_{n}^{\varepsilon }}{k}+u_{n+1}^{\varepsilon
}\cdot \nabla u_{n+1}^{\varepsilon }+{\frac{1}{2}}u_{n+1}^{\varepsilon
}\nabla \cdot u_{n+1}^{\varepsilon }+\nabla p_{n+1}^{\varepsilon }-\nu
\Delta u_{n+1}^{\varepsilon }=f\text{ ,} \\
{\text{\ }}\varepsilon \frac{p_{n+1}^{\varepsilon }-p_{n}^{\varepsilon }}{k}%
+\nabla \cdot u_{n+1}^{\varepsilon }=0
\end{gather*}%
for, typically, $\varepsilon =k$ (timestep). It is fast, efficient and
stable with accuracy $O(\varepsilon +k)$. For adaptive (and thus variable)
timestep $k_{n}$ (and thus $\varepsilon =\varepsilon _{n}$) its long time
stability is unknown. For variable $k,\varepsilon $ this report shows how to
adapt a standard AC method to recover a provably stable method. For the
associated continuum AC model, we prove convergence of the $\varepsilon
=\varepsilon (t)$\ artificial compression model to a weak solution of the
incompressible Navier-Stokes equations as $\varepsilon =\varepsilon
(t)\rightarrow 0$. The analysis is based on space-time Strichartz estimates
for a non-autonomous acoustic equation. Variable $\varepsilon ,k$ numerical
tests in $2d$ and $3d$ are given for the new AC method.
\end{abstract}

\section{Introduction}

Of the many methods for predicting incompressible flow, artificial
compression (AC) methods, based on replacing $\nabla \cdot u=0$ by $%
\varepsilon p_{t}+\nabla \cdot u=0$ ($0<\varepsilon $ small) and advancing
the pressure explicitly in time, are among the most efficient. These methods
also have a reputation for low time accuracy. Herein we study one source of
low accuracy, propose a resolution, give analytical support for the
corrected method and show some numerical comparisons of a common AC method
and its proposed correction. Consider the incompressible Navier-Stokes
equations in a $3d$ domain $\Omega ,$ here either a bounded open set or ${{%
\mathbb{R}}}^{3}$, 
\begin{equation}
\begin{cases}
& u_{t}+(u\cdot \nabla )u+\nabla p-\nu \Delta u=f(t,x) \\ 
& \nabla \cdot u=0,%
\end{cases}
\label{NS}
\end{equation}%
where $(t,x)\in \lbrack 0,T]\times \Omega $, $u\in {{\mathbb{R}}}^{3}$ is
the velocity, $p\in {{\mathbb{R}}}$ the pressure, $\nu $ the kinematic
viscosity, and $f\in {{\mathbb{R}}}^{3}$ the external force.

AC methods, e.g., \cite{GMS06}, \cite{P97}, \cite{DLM17}, are based on
approximating the solution of the slightly compressible equations 
\begin{equation}
\begin{cases}
& u_{t}^{\varepsilon }+(u^{\varepsilon }\cdot \nabla )u^{\varepsilon }+{%
\frac{1}{2}}(\nabla \cdot u^{\varepsilon })u^{\varepsilon }+\nabla
p^{\varepsilon }-\nu \Delta u^{\varepsilon }=f \\ 
& \varepsilon p_{t}^{\varepsilon }+\nabla \cdot u^{\varepsilon }=0,\text{
where }0<\varepsilon \text{ is small.}%
\end{cases}
\label{AC}
\end{equation}%
Here $u^{\varepsilon }$ is the approximate velocity, $p^{\varepsilon }$ is
the approximate pressure and the nonlinearity has been explicitly
skew-symmetrized. (This is a common formulation but not the only one,
Section 1.1). Time accuracy is obtained\footnote{%
The separate issue of pressure initialization, not addressed here, also
exists. We do note that for internal flows pressure data is often more
reliable than velocity data.} by either using explicit time discretization
methods and small time steps for short time simulations, using high order
methods with moderate timesteps for longer time simulations or by adding
time adaptivity to a low or high order implicit method. The first is not
considered herein. The second leads to highly ill-conditioned linear systems%
\footnote{%
For a $4$th order time discretization, $\varepsilon =O(k^{4})$ is necessary
to retain accuracy. This leads to a viscous term \ $-\nu \Delta
u_{n+1}^{\varepsilon }-k^{-3}\nabla \nabla \cdot u_{n+1}^{\varepsilon }$ and
a linear system to be solved at each timestep with condition number $%
\mathcal{O}(timestep^{-2}\times spacemesh^{-2})$.}. (Shen \cite{S96} also
suggests that an accuracy barrier exists in AC methods.) The third, considered
herein, has the possibility to both increase efficiency and provide time
accuracy. To our knowledge, the defect correction based scheme of Guermond
and Minev \cite{GM18} is the only previous work in this direction.

To fix ideas, suppress the space discretization and consider a commonly used
fully-implicit time discretization 
\begin{gather*}
\frac{u_{n+1}^{\varepsilon }-u_{n}^{\varepsilon }}{k}+(u_{n+1}^{\varepsilon
}\cdot \nabla )u_{n+1}^{\varepsilon }+{\frac{1}{2}}(\nabla \cdot
u_{n+1}^{\varepsilon })u_{n+1}^{\varepsilon }+\nabla p_{n+1}^{\varepsilon
}-\nu \Delta u_{n+1}^{\varepsilon }=f(t_{n+1}), \\
{\text{ \ \ \ }}\varepsilon \frac{p_{n+1}^{\varepsilon }-p_{n}^{\varepsilon }%
}{k}+\nabla \cdot u_{n+1}^{\varepsilon }=0.
\end{gather*}%
For other time discretizations see, e.g., \cite{K86}, \cite{OA10}, \cite%
{DLM17}, \cite{YBC16}. Here $k$ is the timestep, $t_{n}=nk,$ $%
u_{n}^{\varepsilon }$, $p_{n}^{\varepsilon }$ are approximations to the
velocity and pressure at $t=t_{n}$. Since $\nabla p_{n+1}^{\varepsilon }=$ $%
\nabla p_{n}^{\varepsilon }-(k/\varepsilon )\nabla \nabla \cdot
u_{n+1}^{\varepsilon }$, this uncouples into%
\begin{gather}
\frac{u_{n+1}^{\varepsilon }-u_{n}^{\varepsilon }}{k}+(u_{n+1}^{\varepsilon
}\cdot \nabla )u_{n+1}^{\varepsilon }+{\frac{1}{2}}(\nabla \cdot
u_{n+1}^{\varepsilon })u_{n+1}^{\varepsilon }+\nabla p_{n}^{\varepsilon }-%
\frac{k}{\varepsilon }\nabla \nabla \cdot u_{n+1}^{\varepsilon }  \notag \\
-\nu \Delta u_{n+1}^{\varepsilon }=-\nabla p_{n}^{\varepsilon }+f_{n+1},
\label{eq:StandardACmethod} \\
{\text{ {then given }}u_{n+1}^{\varepsilon }\text{{: \ }}}%
p_{n+1}^{\varepsilon }=p_{n}^{\varepsilon }-(k/\varepsilon )\nabla \cdot
u_{n+1}^{\varepsilon }.  \notag
\end{gather}%
This method is unconditionally, nonlinearly, long time stable, e.g., \cite%
{GMS06}, \cite{GM15}. It has consistency error $\mathcal{O}(k+\varepsilon )$
and thus determines $\varepsilon $ balancing errors by $\varepsilon =k$.
Time adaptivity means decreasing or increasing the time step according to
solution activity, \cite{GS00}. Given the $\mathcal{O}(k+\varepsilon )$
consistency error, this means varying both $k=k_{n}$ and $\varepsilon
=\varepsilon _{n}$. To our knowledge, no long time stability analysis of
this method with variable $k=k_{n}$ and $\varepsilon =\varepsilon _{n}$\ is
known or even possible at present, Section 2. Peculiar solution behavior
seen in an adaptive simulation thus cannot be ascribed to either a flow
phenomenon or to an anomaly created by the numerical method. This is the
problem we address herein for the time discretized AC method (with $%
\varepsilon =\varepsilon _{n}$) and for the associated continuum AC model
(with $\varepsilon =\varepsilon (t)$).

In Section 2\ we first show that the standard AC method, (\ref%
{eq:StandardACmethod}) above, is 0-stable for variable $\varepsilon ,k$
provided $\varepsilon ,k$ are slowly varying. 0-stability allows
non-catastrophic exponential growth. Thus, (\ref{eq:StandardACmethod})
suffices for short time simulations with nearly constant timesteps. The long
time stability of (\ref{eq:StandardACmethod}) with variable-$\varepsilon ,k$
is analyzed in Section 2 as well. Some preliminary conclusions are presented
but then complete resolution of instability or stability is an open problem
for the standard method.

Section 2 presents a stable extension of AC methods to variable $\varepsilon
,k$, one central contribution of this report. The proposed method is%
\begin{gather}
\frac{u_{n+1}^{\varepsilon }-u_{n}^{\varepsilon }}{k_{n+1}}+\nabla
p_{n+1}^{\varepsilon }-\nu \Delta u_{n+1}^{\varepsilon
}+(u_{n+1}^{\varepsilon }\cdot \nabla )u_{n+1}^{\varepsilon }+{\frac{1}{2}}%
(\nabla \cdot u_{n+1}^{\varepsilon })u_{n+1}^{\varepsilon }=f_{n+1},  \notag
\\
\frac{1}{2}\frac{\varepsilon _{n+1}p_{n+1}^{\varepsilon }-\varepsilon
_{n}p_{n}^{\varepsilon }}{k_{n+1}}+\frac{\varepsilon _{n}}{2}\frac{%
p_{n+1}^{\varepsilon }-p_{n}^{\varepsilon }}{k_{n+1}}+\nabla \cdot
u_{n+1}^{\varepsilon }=0.  \label{eq:variableStepAC}
\end{gather}%
This method reduces to the standard AC method (\ref{eq:StandardACmethod})
for constant $\varepsilon ,k$. Section 2 shows that the new method (\ref%
{eq:variableStepAC}) is unconditionally, nonlinearly, long time stable
without assumptions on $\varepsilon _{n},k_{n}$, Theorem 2.3. In numerical
tests of (\ref{eq:variableStepAC}) in Section 5, the new method works well
(as expected) when $k_{n+1}=\varepsilon _{n+1}$ is picked self adaptively to
ensure $||\nabla \cdot u||$ is below a present tolerance. It also performs
well in tests where $k_{n}=\varepsilon _{n}$ is pre-chosen to try to break
the method's stability or physical fidelity by increasing or fluctuating $%
\varepsilon ,k$.

In support, we give an analysis of the physical fidelity of the
non-autonomous continuum model associated with (\ref{eq:variableStepAC}): 
\begin{equation}
\begin{array}{c}
\partial _{t}u^{\varepsilon }+(u^{\varepsilon }\cdot \nabla )u^{\varepsilon
}+{\frac{1}{2}}(\nabla \cdot u^{\varepsilon })u^{\varepsilon }+\nabla
p^{\varepsilon }-\nu \Delta u^{\varepsilon }=f, \\ 
\partial _{t}(\varepsilon (t)p^{\varepsilon })-{\frac{1}{2}}\varepsilon
_{t}(t)p^{\varepsilon }+\nabla \cdot u^{\varepsilon }=0.%
\end{array}
\label{ACm}
\end{equation}%
Sections 3 and 4 address the question:\textit{\ Under what conditions on }$%
\varepsilon (t)$\textit{\ do solutions to the new AC model (1.3) converge to
weak solutions of the incompressible NSE as }$\varepsilon \rightarrow 0$%
\textit{?} Convergence (modulo a subsequence) is proven under the assumption
on the fluctuation $\varepsilon _{t}(t)$ and the second variation $%
\varepsilon _{tt}(t)$ that 
\begin{equation}
\varepsilon (t)\leq C\epsilon \rightarrow 0,\quad \lim_{\epsilon \rightarrow
0}{\frac{\varepsilon _{t}(t)}{\varepsilon (t)}}=0,\quad \lim_{\epsilon
\rightarrow 0}{\frac{\varepsilon _{tt}(t)}{\varepsilon (t)}}=0,\quad {{\text{
for }}}t\in \lbrack 0,T].  \label{condonepsilon}
\end{equation}%
This extension of model convergence to the non-autonomous system is a second
central contribution herein. In self-adaptive simulations based on (\ref%
{eq:variableStepAC}), this condition requires smooth adjustment of timesteps
and precludes a common strategy of timestep halving or doubling. A similar
smoothness condition on $\varepsilon _{t}(t)$ recently arose in stability
analysis of other variable timestep methods in \cite{SFR18}. Weakening the
condition (\ref{condonepsilon}) on $\varepsilon (t)$\ (which we conjecture
is possible) is an important open problem.

\subsection{Related work}

Artificial compression (AC) methods were introduced by Chorin \cite%
{chorin1968numerical,chorin1969convergence}, Oskolkov \cite%
{oskolkov1971aquasi} and Temam \cite%
{temam1969approximation1,temam1969approximation2}. For constant (not time
variable) $\varepsilon ,$ convergence of the AC approximation \eqref {AC} to
a weak solution of the NSE \eqref {NS} as $\varepsilon \rightarrow 0$ has
been proven for bounded 2d domains Temam \cite%
{temam1969approximation1,temam1969approximation2,temam2001book} (using the
method of fractional derivatives of Lions \cite{jllions1959}).
Donatelli-Marcati \cite{donatelli2006whole,donatelli2010exterior} extended $%
\varepsilon \rightarrow 0$\ convergence to the case of the 3d whole space
and exterior domains and in \cite{donatelli2006whole} by using the
dispersive structure of the acoustic pressure equation. There is also a
growing literature establishing convergence of discretizations of AC models
to NSE solutions including \cite{GM15}, \cite{GM17}, \cite{JL04}, \cite{K02}%
, \cite{S92a}.

\subsection{Analytical difficulties of the $\protect\varepsilon %
(t)\rightarrow 0$ limit}

For $\Omega ={{\mathbb{R}}}^{3}$, one difficulty in establishing convergence
is the estimate for acoustic pressure waves. From the acoustic pressure wave
equation \eqref {PWm} for the new model (\ref{ACm}), the pressure wave speed
is $\mathcal{O}(1/\varepsilon )$, suggesting only weak convergence of the
velocity $u^{\varepsilon }$. Strong convergence of $u^{\varepsilon }$ thus
hinges upon the dispersive behavior of these waves at infinity. In the case
when $\varepsilon $ is constant, the classical Strichartz type estimates 
\cite{GV95,KT98,S77} together with a refined bilinear estimate \cite%
{KM93,S95} of the three-dimensional inhomogeneous wave equations can be
directly applied to infer sufficient control of the pressure waves. However,
when $\varepsilon =\varepsilon (t)$, the resulting acoustic equation is 
\textit{non-autonomous}. There are still results on the space-time
Strichartz estimates for variable-coefficient wave equations at our
disposal, cf. Theorem \ref{thm_strichartz}. However the refined bilinear
estimates do not seem to be available, since these estimates are based on
the explicit structure of the Kirchhoff's formula for the classical wave
operator. To overcome this difficulty, we further introduce a scale change
in the time variable so that the principal part of the resulting pressure
wave equation becomes the classical wave operator. The lower order terms
have coefficients depending on the fluctuation and second variation of $%
\varepsilon $, and can be thought of as the forcing terms. That is the
motivation of the assumption \eqref {condonepsilon} under which the
coefficients of the lower order terms can be made small, $o(1)$, and hence
can be absorbed into the left-hand side of the Strichartz estimates. This
allows us to obtain the refined bilinear estimates, and therefore establish
the desired dispersive estimates for the pressure. Please refer to Section %
\ref{subsec_pressure} for more details.

\subsection{Other AC formulations}

Generally AC methods skew-symmetrize the nonlinearity and include a term $%
\varepsilon p_{t}$ that uncouples pressure and velocity and lets the
pressure be explicitly advanced in time. There are several choices for
the first and several for the second. A few alternate possibilities are
described next and combinations of these are certainly possible.

Motivated by the equations of hyposonic flow \cite{Z06}, the material
derivative can be used for the artificial compression term, e.g., \cite%
{oskolkov1971aquasi}, 
\begin{equation}
\begin{cases}
& \partial _{t}u^{\varepsilon }+(u^{\varepsilon }\cdot \nabla
)u^{\varepsilon }+{\frac{1}{2}}(\nabla \cdot u^{\varepsilon })u^{\varepsilon
}+\nabla p^{\varepsilon }-\nu \Delta u^{\varepsilon }=f, \\ 
& \varepsilon \left( \partial _{t}p^{\varepsilon }+u^{\varepsilon }\cdot
\nabla p^{\varepsilon }\right) +\nabla \cdot u^{\varepsilon }=0.%
\end{cases}%
\end{equation}%
Numerical dissipation can be incorporated into the pressure equation, e.g. 
\cite{K02}, as in 
\begin{equation}
\begin{cases}
& \partial _{t}u^{\varepsilon }+(u^{\varepsilon }\cdot \nabla
)u^{\varepsilon }+{\frac{1}{2}}(\nabla \cdot u^{\varepsilon })u^{\varepsilon
}+\nabla p^{\varepsilon }-\nu \Delta u^{\varepsilon }=f \\ 
& \varepsilon \left( \partial _{t}p^{\varepsilon }+p^{\varepsilon }\right)
+\nabla \cdot u^{\varepsilon }=0.%
\end{cases}%
\end{equation}%
A dispersive regularization has been included in the momentum equation in 
\cite{DLM17}, 
\begin{equation}
\begin{cases}
& \partial _{t}\left( u^{\varepsilon }-\frac{1}{\varepsilon }\nabla \nabla
\cdot u^{\varepsilon }\right) +(u^{\varepsilon }\cdot \nabla )u^{\varepsilon
}+{\frac{1}{2}}(\nabla \cdot u^{\varepsilon })u^{\varepsilon }+\nabla
p^{\varepsilon }-\nu \Delta u^{\varepsilon }=f, \\ 
& \varepsilon \partial _{t}p^{\varepsilon }+\nabla \cdot u^{\varepsilon }=0.%
\end{cases}%
\end{equation}%
The nonlinearity can be skew symmetrized in various ways, replacing $u\cdot
\nabla u$\ in the NSE by one of the following 
\begin{equation*}
\begin{array}{ccc}
\text{{standard skew-symmetrization}} & : & (u^{\varepsilon }\cdot \nabla
)u^{\varepsilon }+{\frac{1}{2}}(\nabla \cdot u^{\varepsilon })u^{\varepsilon
} \\ 
{\text{Rotational form}} & : & \left( \nabla \times u^{\varepsilon }\right)
\times u^{\varepsilon } \\ 
{\text{EMA form \cite{CHOR17}}} & : & (\nabla u^{\varepsilon }+\left( \nabla
u^{\varepsilon }\right) ^{T})u^{\varepsilon }+(\nabla \cdot u^{\varepsilon
})u^{\varepsilon }%
\end{array}%
\end{equation*}%
The penalty model (not studied herein) where $\nabla \cdot u=0$ is replaced
by $\nabla \cdot u^{\varepsilon }=-\varepsilon p^{\varepsilon }$, is
sometimes also viewed as an artificial compression model, \cite{P97}.

\section{Stability of variable-$\protect\varepsilon $ AC methods}

We begin by considering variable $\varepsilon $ stability of the standard
method%
\begin{gather}
\frac{u_{n+1}^{\varepsilon }-u_{n}^{\varepsilon }}{k_{n+1}}+\nabla
p_{n+1}^{\varepsilon }-\nu \Delta u_{n+1}^{\varepsilon
}+(u_{n+1}^{\varepsilon }\cdot \nabla )u_{n+1}^{\varepsilon }+{\frac{1}{2}}%
(\nabla \cdot u_{n+1}^{\varepsilon })u_{n+1}^{\varepsilon }=f_{n+1},  \notag
\\
{\text{ \ \ \ }}\varepsilon _{n+1}\frac{p_{n+1}^{\varepsilon
}-p_{n}^{\varepsilon }}{k_{n+1}}+\nabla \cdot u_{n+1}^{\varepsilon }=0,\text{
}  \label{eq:StandardMethod} \\
\text{subject to initial and boundary conditions:}  \notag \\
u_{0}^{\varepsilon }(x)=u_{0}(x), \quad p_{0}^{\varepsilon }(x)=p_{0}(x),\text{ in }%
\Omega \text{ ,}  \notag \\
u_{n}^{\varepsilon }=0\text{ on }\partial \Omega \text{ for }t>0\text{.} 
\notag
\end{gather}%
We first prove 0-stability, namely that $u_{n}^{\varepsilon }$ can grow no
faster than exponential, when $\varepsilon _{n}$ is slowly varying. The case
when $f\equiv 0$ is clearest since then any energy growth is then incorrect.

\begin{theorem}
For the standard method (\ref{eq:StandardMethod}), let%
\begin{equation*}
f_{n}=0\text{ for all }n
\end{equation*}%
and suppose%
\begin{equation*}
\left\vert \frac{\varepsilon _{n+1}-\varepsilon _{n}}{k_{n}}\right\vert \leq
\beta \varepsilon _{n}\text{ for some }\beta \text{ for all }n\text{.}
\end{equation*}%
Then%
\begin{eqnarray*}
\frac{1}{2}\int \left[ |u_{n}^{\varepsilon }|^{2}+\varepsilon _{n}\left(
p_{n}^{\varepsilon }\right) ^{2}\right] dx &\leq &\left( \Pi
_{j=1}^{n-1}(1+k_{j}\beta )\right) \frac{1}{2}\int \left[ |u_{0}|^{2}+%
\varepsilon _{0}\left( p_{0}\right) ^{2}\right] dx \\
&\leq &e^{\beta t_{n}}\frac{1}{2}\int \left[ |u_{0}|^{2}+\varepsilon
_{0}\left( p_{0}\right) ^{2}\right] dx.
\end{eqnarray*}
\end{theorem}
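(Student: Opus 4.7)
The approach is a standard discrete energy estimate, adapted to track the dependence on the varying $\varepsilon_n$. The natural energy to monitor is
\[
E_n \;=\; \tfrac{1}{2}\int_{\Omega}\!\bigl[\,|u_n^{\varepsilon}|^2 + \varepsilon_n (p_n^{\varepsilon})^2\,\bigr]\,dx,
\]
and the plan is to derive a one-step bound $E_{n+1} \leq (1+\beta k_n) E_n$ and then iterate.

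First I would test the discrete momentum equation against $u_{n+1}^{\varepsilon}$ and the discrete pressure equation against $p_{n+1}^{\varepsilon}$, integrate over $\Omega$, and add. The explicitly skew-symmetrized nonlinearity cancels under this test, since $\int_{\Omega}\!\bigl[(u_{n+1}^{\varepsilon}\!\cdot\!\nabla)u_{n+1}^{\varepsilon} + \tfrac{1}{2}(\nabla\!\cdot\! u_{n+1}^{\varepsilon})u_{n+1}^{\varepsilon}\bigr]\cdot u_{n+1}^{\varepsilon}\,dx = 0$. Integration by parts against the homogeneous boundary condition cancels $\int \nabla p_{n+1}^{\varepsilon}\!\cdot\! u_{n+1}^{\varepsilon}\,dx$ against $\int p_{n+1}^{\varepsilon}\nabla\!\cdot\! u_{n+1}^{\varepsilon}\,dx$, so the velocity--pressure coupling disappears entirely.

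Next I would apply the polarization identity $(a-b)\,a = \tfrac{1}{2}(a^2 - b^2 + (a-b)^2)$ to both finite-difference terms. Multiplying through by $k_{n+1}$ and discarding the nonnegative quantities $\tfrac{1}{2}\|u_{n+1}^{\varepsilon} - u_n^{\varepsilon}\|^2$, $k_{n+1}\nu\|\nabla u_{n+1}^{\varepsilon}\|^2$, and $\tfrac{\varepsilon_{n+1}}{2}\|p_{n+1}^{\varepsilon} - p_n^{\varepsilon}\|^2$ yields
\[
\tfrac{1}{2}\|u_{n+1}^{\varepsilon}\|^2 + \tfrac{\varepsilon_{n+1}}{2}\|p_{n+1}^{\varepsilon}\|^2 \;\leq\; \tfrac{1}{2}\|u_n^{\varepsilon}\|^2 + \tfrac{\varepsilon_{n+1}}{2}\|p_n^{\varepsilon}\|^2.
\]
The mismatch between the coefficient $\varepsilon_{n+1}$ on the right and the $\varepsilon_n$ that appears in the definition of $E_n$ is the crux. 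Rewriting $\tfrac{\varepsilon_{n+1}}{2}\|p_n^{\varepsilon}\|^2 = \tfrac{\varepsilon_n}{2}\|p_n^{\varepsilon}\|^2 + \tfrac{\varepsilon_{n+1}-\varepsilon_n}{2}\|p_n^{\varepsilon}\|^2$ and invoking the slow-variation hypothesis to bound the defect by $\tfrac{1}{2}\beta k_n \varepsilon_n \|p_n^{\varepsilon}\|^2 \leq \beta k_n E_n$ gives the desired $E_{n+1} \leq (1+\beta k_n) E_n$.

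Iterating this one-step bound produces the product estimate, and the elementary inequality $1+x \leq e^x$ together with $\sum_j k_j \leq t_n$ upgrades this to the exponential form. The theorem is not analytically deep; the main obstacle is purely a matter of careful bookkeeping. In particular, one must test with the \emph{new} time level $u_{n+1}^{\varepsilon}, p_{n+1}^{\varepsilon}$ so that the polarization identity leaves behind nonnegative numerical-dissipation terms (testing with $u_n^{\varepsilon}, p_n^{\varepsilon}$, or with an average, destroys this), and the hypothesis on $\varepsilon_{n+1}-\varepsilon_n$ enters in exactly one place --- the $\varepsilon$-shift step --- so it is indeed the minimal condition one can impose to close the loop.
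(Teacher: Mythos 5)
Your proposal is correct and follows essentially the same route as the paper: test with the new time level, use skew-symmetry and integration by parts to kill the nonlinear and coupling terms, apply the polarization identity, isolate the defect $\tfrac{1}{2}(\varepsilon_{n+1}-\varepsilon_n)\|p_n^\varepsilon\|^2$ created by the non-telescoping pressure energy, bound it by $\beta k_n E_n$ via the slow-variation hypothesis, and iterate with $1+x\le e^x$. Your closing remarks on why one must test at level $n+1$ and where the hypothesis enters are accurate and match the paper's own discussion.
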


\begin{proof}
Take an inner product of the first equation with $k_{n+1}u_{n+1}^{%
\varepsilon }$, the second with $k_{n+1}p_{n+1}^{\varepsilon }$, integrate
over $\Omega $, integrate by parts, use skew symmetry and add. This yields,
by the polarization identity\footnote{%
Alternately, the algebraic identity $vw=\frac{1}{2}v^{2}+\frac{1}{2}w^{2}-%
\frac{1}{2}(v-w)^{2}.$}, 
\begin{gather*}
\frac{1}{2}\int |u_{n+1}^{\varepsilon }|^{2}-|u_{n}^{\varepsilon
}|^{2}+|u_{n+1}^{\varepsilon }-u_{n}^{\varepsilon }|^{2}dx+ \\
\frac{1}{2}\int \varepsilon _{n+1}\left( p_{n+1}^{\varepsilon }\right)
^{2}-\varepsilon _{n+1}\left( p_{n}^{\varepsilon }\right) ^{2}+\frac{%
\varepsilon _{n+1}}{2}|p_{n+1}^{\varepsilon }-p_{n}^{\varepsilon
}|^{2}dx+k_{n+1}\int \nu |\nabla u_{n+1}^{\varepsilon }|^{2}dx=0.
\end{gather*}%
The pressure terms do not collapse into a telescoping sum upon adding due to
the variability of $\varepsilon $. Thus we correct for this effect,
rearrange and adjust appropriately to yield%
\begin{gather*}
\frac{1}{2}\int |u_{n+1}^{\varepsilon }|^{2}+\varepsilon _{n+1}\left(
p_{n+1}^{\varepsilon }\right) ^{2}dx-\frac{1}{2}\int |u_{n}^{\varepsilon
}|^{2}dx+\varepsilon _{n}\left( p_{n}^{\varepsilon }\right) ^{2}dx+ \\
+\int \frac{1}{2}|u_{n+1}^{\varepsilon }-u_{n}^{\varepsilon }|^{2}+\frac{%
\varepsilon _{n+1}}{2}|p_{n+1}^{\varepsilon }-p_{n}^{\varepsilon }|^{2}dx+ \\
+k_{n+1}\int \nu |\nabla u_{n+1}^{\varepsilon }|^{2}dx=\frac{1}{2}\int
\left( \varepsilon _{n+1}-\varepsilon _{n}\right) \left( p_{n}^{\varepsilon
}\right) ^{2}dx.
\end{gather*}%
Note that $\int \left( \varepsilon _{n+1}-\varepsilon _{n}\right) \left(
p_{n}^{\varepsilon }\right) ^{2}dx\leq k_{n}\beta \int \varepsilon
_{n}\left( p_{n}^{\varepsilon }\right) ^{2}dx$. Dropping the (non-negative)
dissipation terms we have%
\begin{equation*}
y_{n+1}-y_{n}\leq k_{n}\beta y_{n}\text{ where }y_{n}:=\frac{1}{2}\int
|u_{n}^{\varepsilon }|^{2}dx+\varepsilon _{n}\left( p_{n}^{\varepsilon
}\right) ^{2}dx,
\end{equation*}%
from which the first result follows immediately. For the second inequality,
note that since $1+k_{j}\beta \leq e^{k_{j}\beta }$ we have%
\begin{equation*}
\Pi _{j=1}^{n-1}(1+k_{j}\beta )\leq \Pi _{j=1}^{n-1}e^{k_{j}\beta }=e^{\beta 
\left[ \sum_{j=1}^{n-1}k_{j}\right] }=e^{\beta t_{n}}
\end{equation*}
\end{proof}

Since $\beta $ (by assumption) is independent of the timestep $k$, this
implies 0-stability. For short time simulations, 0-stability suffices, but
is insufficient for simulations over longer time intervals. The assumption
that \ $\varepsilon $ (and thus also the timestep) is slowly varying:%
\begin{equation*}
\left\vert \frac{\varepsilon _{n+1}-\varepsilon _{n}}{k_{n}}\right\vert \leq
\beta \varepsilon _{n}
\end{equation*}%
precludes the common adaptive strategy of timestep halving and doubling. For
example, suppose%
\begin{eqnarray*}
\varepsilon _{n+1} &=&2\varepsilon _{n}\text{ and }k_{n+1}=2k_{n}\text{ then}
\\
\frac{\varepsilon _{n+1}-\varepsilon _{n}}{k_{n}\varepsilon _{n}} &=&\frac{%
2\varepsilon _{n}-\varepsilon _{n}}{k_{n}\varepsilon _{n}}=\frac{1}{k_{n}}%
\rightarrow \infty \text{ as }k\rightarrow 0.
\end{eqnarray*}

\subsection{The corrected, variable-$\protect\varepsilon $ AC method}

The above proof indicates that the problem arises from the fact that the
discrete $\varepsilon p_{t}$\ term is not a time difference when multiplied
by $p$. In all cases, the standard method obeys the discrete energy law%
\begin{gather}
\frac{1}{2}\int |u_{n+1}^{\varepsilon }|^{2}+\varepsilon _{n+1}\left(
p_{n+1}^{\varepsilon }\right) ^{2}dx-\frac{1}{2}\int |u_{n}^{\varepsilon
}|^{2}dx+\varepsilon _{n}\left( p_{n}^{\varepsilon }\right) ^{2}dx+
\label{eq:KElawStandardAC} \\
+\int \frac{1}{2}|u_{n+1}^{\varepsilon }-u_{n}^{\varepsilon }|^{2}+\frac{%
\varepsilon _{n+1}}{2}|p_{n+1}^{\varepsilon }-p_{n}^{\varepsilon
}|^{2}+k_{n+1}\nu |\nabla u_{n+1}^{\varepsilon }|^{2}dx=  \notag \\
=\frac{1}{2}\int \left( \varepsilon _{n+1}-\varepsilon _{n}\right) \left(
p_{n}^{\varepsilon }\right) ^{2}dx.
\end{gather}%
Since the variable-$\varepsilon $ term $(\varepsilon _{n}-\varepsilon
_{n+1})(p_{n})^{2}$ has two signs, depending only on whether the timestep is
increasing or decreasing, it can either dissipate energy or input energy.
The sign of the RHS shows that if:

\begin{itemize}
\item $k_{n}$ is decreasing the effect of changing the timestep is \textit{%
dissipative}, while if
\end{itemize}

\begin{itemize}
\item $k_{n}$ is increasing the effect of changing the timestep \textit{%
inputs energy} into the approximate solution.
\end{itemize}

In the second case, if the term $(\varepsilon _{n}-\varepsilon
_{n+1})|p_{n}|^{2}$ dominates in the aggregate the other dissipative terms
non-physical energy growth may be possible. However, we stress that we have
neither a proof of long time stability of the variable-$\varepsilon $
standard method nor a convincing example of instability. Resolving this is
an open problem discussed in the next sub-section.

The \ practical question is how to adapt the AC method to variable-$%
\varepsilon $ so as to ensure long time stability. After testing a few
natural alternatives we propose the new AC method 
\begin{eqnarray}
\frac{u_{n+1}^{\varepsilon }-u_{n}^{\varepsilon }}{k_{n+1}}%
+(u_{n+1}^{\varepsilon }\cdot \nabla )u_{n+1}^{\varepsilon }+{\frac{1}{2}}%
(\nabla \cdot u_{n+1}^{\varepsilon })u_{n+1}^{\varepsilon }-\nu \Delta
u_{n+1}^{\varepsilon }+\nabla p_{n+1}^{\varepsilon } &=&f_{n+1},  \notag \\
\frac{1}{2}\frac{\varepsilon _{n+1}p_{n+1}^{\varepsilon }-\varepsilon
_{n}p_{n}^{\varepsilon }}{k_{n+1}}+\frac{\varepsilon _{n}}{2}\frac{%
p_{n+1}^{\varepsilon }-p_{n}^{\varepsilon }}{k_{n+1}}+\nabla \cdot
u_{n+1}^{\varepsilon } &=&0.  \label{eq:NewACMethod}
\end{eqnarray}%
When $\varepsilon $ is constant the new method (\ref{eq:NewACMethod})
reduces to the standard method (\ref{eq:StandardACmethod}).

\begin{remark}[Higher Order Methods]
If a higher order time discretization such as BDF2 is desired, the
modification required is to use the higher order discretization for the
momentum equation, the same modification of the continuity equation and
select $\varepsilon _{n}=k_{n}^{\text{method order}}$ to preserve higher
order consistency error. For example, for variable step BDF2, let $\tau
=k_{n+1}/k_{n}.$ Then we have 
\begin{gather*}
\frac{\frac{2\tau +1}{\tau +1}u_{n+1}^{\varepsilon }-(\tau
+1)u_{n}^{\varepsilon }+\frac{\tau ^{2}}{\tau +1}u_{n-1}^{\varepsilon }}{%
k_{n+1}}+(u_{n+1}^{\varepsilon }\cdot \nabla )u_{n+1}^{\varepsilon }+{\frac{1%
}{2}}(\nabla \cdot u_{n+1}^{\varepsilon })u_{n+1}^{\varepsilon }\\
+\nabla
p_{n+1}^{\varepsilon }-\nu \Delta u_{n+1}^{\varepsilon }=f(t_{n+1}), \\
{\text{ \ }}\frac{1}{2}\frac{\varepsilon _{n+1}p_{n+1}^{\varepsilon
}-\varepsilon _{n}p_{n}^{\varepsilon }}{k_{n+1}}+\frac{\varepsilon _{n}}{2}%
\frac{p_{n+1}^{\varepsilon }-p_{n}^{\varepsilon }}{k_{n+1}}+\nabla \cdot
u_{n+1}^{\varepsilon }=0\text{ with }\varepsilon _{n+1}=k_{n+1}^{2}\text{.}
\end{gather*}%
This is easily proven $A-$stable for constant timesteps. Since BDF2 is not $%
A-$stable for increasing timesteps, the above would also not be expected to
be more than $0-$stable for increasing timesteps.
\end{remark}

\begin{theorem}
The variable-$\varepsilon ,k$ method (\ref{eq:NewACMethod}) is
unconditionally, long time stable. For any $N>0$ the energy equality holds: 
\begin{gather*}
\frac{1}{2}\int |u_{N}^{\varepsilon }|^{2}+\varepsilon
_{N}|p_{N}^{\varepsilon }|^{2}dx+ \\
+\sum_{n=0}^{N-1}\left[ \int \frac{1}{2}|u_{n+1}^{\varepsilon
}-u_{n}^{\varepsilon }|^{2}+\frac{\varepsilon _{n}}{2}(p_{n+1}^{\varepsilon
}-p_{n}^{\varepsilon })^{2}+k_{n+1}\nu |\nabla u_{n+1}^{\varepsilon }|^{2}dx%
\right] = \\
=\frac{1}{2}\int |u_{0}|^{2}+\varepsilon
_{0}|p_{0}|^{2}dx+\sum_{n=0}^{N-1}k_{n+1}\int u_{n+1}^{\varepsilon }\cdot
f_{n+1}dx
\end{gather*}%
and the stability bound holds:%
\begin{gather*}
\frac{1}{2}\int |u_{N}^{\varepsilon }|^{2}+\varepsilon
_{N}|p_{N}^{\varepsilon }|^{2}dx+ \\
+\sum_{n=0}^{N-1}\left[ \int \frac{1}{2}|u_{n+1}^{\varepsilon
}-u_{n}^{\varepsilon }|^{2}+\frac{\varepsilon _{n}}{2}(p_{n+1}^{\varepsilon
}-p_{n}^{\varepsilon })^{2}+k_{n+1}\frac{\nu }{2}|\nabla
u_{n+1}^{\varepsilon }|^{2}dx\right]  \\
=\frac{1}{2}\int |u_{0}|^{2}+\varepsilon
_{0}|p_{0}|^{2}dx+\sum_{n=0}^{N-1}k_{n+1}\frac{1}{2\nu }||f_{n+1}||_{-1}^{2}
\end{gather*}
\end{theorem}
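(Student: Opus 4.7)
The plan is to test the two equations of the scheme against natural multipliers, exploit the specific algebraic form of the modified continuity equation (which was designed precisely to produce a telescoping structure), and sum.

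First, I would take the $L^2$ inner product of the momentum equation of \eqref{eq:NewACMethod} with $k_{n+1}u_{n+1}^{\varepsilon}$ and integrate over $\Omega$. The skew-symmetrized nonlinear terms $(u_{n+1}^\varepsilon\cdot\nabla)u_{n+1}^\varepsilon + \tfrac{1}{2}(\nabla\cdot u_{n+1}^\varepsilon)u_{n+1}^\varepsilon$ vanish upon testing against $u_{n+1}^\varepsilon$ (standard identity using integration by parts). The polarization identity $2(a-b)\cdot a = |a|^2 - |b|^2 + |a-b|^2$ converts the time difference into $\tfrac{1}{2}(|u_{n+1}^\varepsilon|^2 - |u_n^\varepsilon|^2 + |u_{n+1}^\varepsilon - u_n^\varepsilon|^2)$, the viscous term yields $k_{n+1}\nu\|\nabla u_{n+1}^\varepsilon\|^2$ after integration by parts (with Dirichlet boundary data), and the pressure term becomes $-k_{n+1}\int p_{n+1}^\varepsilon\,\nabla\cdot u_{n+1}^\varepsilon\,dx$.

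Next, I would test the modified continuity equation with $k_{n+1}p_{n+1}^{\varepsilon}$. The divergence term produces exactly $k_{n+1}\int p_{n+1}^\varepsilon\,\nabla\cdot u_{n+1}^\varepsilon\,dx$, which cancels the pressure-velocity coupling from the momentum equation. The key algebraic step, and really the heart of the proof, is the identity
\begin{equation*}
\Bigl[\tfrac{1}{2}(\varepsilon_{n+1}p_{n+1}^\varepsilon - \varepsilon_n p_n^\varepsilon) + \tfrac{\varepsilon_n}{2}(p_{n+1}^\varepsilon - p_n^\varepsilon)\Bigr]\,p_{n+1}^\varepsilon = \tfrac{\varepsilon_{n+1}}{2}(p_{n+1}^\varepsilon)^2 - \tfrac{\varepsilon_n}{2}(p_n^\varepsilon)^2 + \tfrac{\varepsilon_n}{2}(p_{n+1}^\varepsilon - p_n^\varepsilon)^2,
\end{equation*}
which one verifies by direct expansion. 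This is precisely why the coefficients $\tfrac{1}{2}$ and $\tfrac{\varepsilon_n}{2}$ are chosen as they are in \eqref{eq:NewACMethod}: they split the mass-conservation-like time derivative into an exact telescoping piece $\tfrac{1}{2}(\varepsilon_{n+1}p_{n+1}^2 - \varepsilon_n p_n^2)$ plus a nonnegative numerical dissipation term $\tfrac{\varepsilon_n}{2}(p_{n+1}-p_n)^2$, with no residual $(\varepsilon_{n+1}-\varepsilon_n)p_n^2$ term like the one that obstructed stability of the standard method in the energy law \eqref{eq:KElawStandardAC}.

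Adding the two tested equations gives the single-step energy identity
\begin{equation*}
\tfrac{1}{2}\|u_{n+1}^\varepsilon\|^2 + \tfrac{\varepsilon_{n+1}}{2}\|p_{n+1}^\varepsilon\|^2 + \tfrac{1}{2}\|u_{n+1}^\varepsilon - u_n^\varepsilon\|^2 + \tfrac{\varepsilon_n}{2}\|p_{n+1}^\varepsilon - p_n^\varepsilon\|^2 + k_{n+1}\nu\|\nabla u_{n+1}^\varepsilon\|^2 = \tfrac{1}{2}\|u_n^\varepsilon\|^2 + \tfrac{\varepsilon_n}{2}\|p_n^\varepsilon\|^2 + k_{n+1}\!\int u_{n+1}^\varepsilon\cdot f_{n+1}\,dx.
\end{equation*}
Summing this telescoping relation from $n=0$ to $N-1$ yields the claimed energy equality directly. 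For the stability bound, I would bound the forcing term by duality and Young's inequality,
\begin{equation*}
k_{n+1}\int u_{n+1}^\varepsilon\cdot f_{n+1}\,dx \le k_{n+1}\|f_{n+1}\|_{-1}\|\nabla u_{n+1}^\varepsilon\| \le \tfrac{k_{n+1}}{2\nu}\|f_{n+1}\|_{-1}^2 + \tfrac{k_{n+1}\nu}{2}\|\nabla u_{n+1}^\varepsilon\|^2,
\end{equation*}
and absorb the second term into the viscous dissipation on the left. Summing produces the stability estimate.

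The only genuinely nontrivial step is the algebraic identity for the pressure multiplier; once it is in hand, everything else is bookkeeping with the polarization identity and Young's inequality. There is no assumption needed on how $\varepsilon_n$ or $k_n$ vary, because the design of \eqref{eq:NewACMethod} absorbs the variable-$\varepsilon$ effect into the telescoping sum rather than leaving it as a sign-indefinite residual, which is the structural reason unconditional long-time stability holds.
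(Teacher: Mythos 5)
Your proposal is correct and follows essentially the same route as the paper's proof: test with $k_{n+1}u_{n+1}^{\varepsilon}$ and $k_{n+1}p_{n+1}^{\varepsilon}$, use skew symmetry, exploit the exact telescoping identity for the modified pressure time difference (the paper writes it as the polarization identity plus $2\varepsilon _{n}p_{n}^{\varepsilon }p_{n+1}^{\varepsilon }=\varepsilon _{n}p_{n+1}^{\varepsilon 2}+\varepsilon _{n}p_{n}^{\varepsilon 2}-\varepsilon _{n}(p_{n+1}^{\varepsilon }-p_{n}^{\varepsilon })^{2}$, which is your consolidated identity), sum, and close the stability bound with Cauchy--Schwarz--Young. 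Your version of the pressure dissipation term, $\tfrac{\varepsilon _{n}}{2}(p_{n+1}^{\varepsilon }-p_{n}^{\varepsilon })^{2}$, in fact matches the theorem statement more faithfully than the paper's intermediate display, which carries an apparent typo ($\varepsilon _{n+1}$ in place of $\varepsilon _{n}$).
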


\begin{proof}
We follow the stability analysis in the last proof. Take an inner product of
the first equation with $k_{n+1}u_{n+1}^{\varepsilon }$, the second with $%
k_{n+1}p_{n+1}^{\varepsilon }$, integrate over the flow domain, integrate by
parts, use skew symmetry, use the polarization identity twice and add. This
yields 
\begin{gather*}
\frac{1}{2}\int |u_{n+1}^{\varepsilon }|^{2}-|u_{n}^{\varepsilon
}|^{2}+|u_{n+1}^{\varepsilon }-u_{n}^{\varepsilon }|^{2}dx+\int k_{n+1}\nu
|\nabla u_{n+1}^{\varepsilon }|^{2}dx \\
\frac{1}{2}\int \varepsilon _{n+1}p_{n+1}^{\varepsilon 2}-2\varepsilon
_{n}p_{n}^{\varepsilon }p_{n+1}^{\varepsilon }+\varepsilon
_{n}p_{n+1}^{\varepsilon 2}dx=k_{n+1}\int u_{n+1}^{\varepsilon }\cdot
f_{n+1}dx.
\end{gather*}%
From the identity 
\begin{equation*}
2\varepsilon _{n}p_{n}^{\varepsilon }p_{n+1}^{\varepsilon }=\varepsilon
_{n}p_{n+1}^{\varepsilon 2}+\varepsilon _{n}p_{n}^{\varepsilon
2}-\varepsilon _{n}(p_{n+1}^{\varepsilon }-p_{n}^{\varepsilon })^{2},
\end{equation*}%
the energy equality becomes 
\begin{gather*}
\int \frac{1}{2}|u_{n+1}^{\varepsilon }|^{2}+\frac{\varepsilon _{n+1}}{2}%
|p_{n+1}^{\varepsilon }|^{2}dx-\int \frac{1}{2}|u_{n}^{\varepsilon }|^{2}+%
\frac{\varepsilon _{n}}{2}|p_{n}^{\varepsilon }|^{2}dx+\int k_{n+1}\nu
|\nabla u_{n+1}^{\varepsilon }|^{2}dx \\
\int \frac{1}{2}(u_{n+1}^{\varepsilon }-u_{n}^{\varepsilon })^{2}+\frac{%
\varepsilon _{n+1}}{2}(p_{n+1}^{\varepsilon }-p_{n}^{\varepsilon
})^{2}dx=k_{n+1}\int u_{n+1}^{\varepsilon }\cdot f_{n+1}dx.
\end{gather*}%
Upon summation the first two terms telescope, completing the proof of the
energy equality. The stability estimate follows from the energy equality and
the Cauchy-Schwarz-Young inequality.
\end{proof}

\subsection{Insight into a possible variable-$\protect\varepsilon $
instability}

The difficulty in ensuring long time stability when simply solving (\ref{AC}%
) for variable $\varepsilon$ can be understood at the level of the continuum
model. When $f=0$ the NSE kinetic energy is monotonically decreasing so any
growth in model energy represents an instability. Dropping the superscript $%
\varepsilon $\ for this sub-section, consider the kinetic energy evolution
of 
\begin{equation}
\begin{cases}
& \partial _{t}u+\nabla p=\nu \Delta u-(u\cdot \nabla )u-{\frac{1}{2}}%
(\nabla \cdot u)u, \\ 
& \varepsilon (t)\partial _{t}p+\nabla \cdot u=0,%
\end{cases}
\label{eq:VariableEpsilonModel}
\end{equation}%
subject to periodic or no slip boundary conditions. Computing the model's
kinetic energy by taking the inner product with, respectively, $u$ and $p$,
integrating then adding gives the continuum equivalent of the kinetic energy
law of the standard AC method (\ref{eq:KElawStandardAC}) above: 
\begin{equation}
\frac{d}{dt}\frac{1}{2}\int |u(t)|^{2}+\varepsilon (t)p(t)^{2}dx+\int \nu
|\nabla u|^{2}dx=\mathbf{+}\int \varepsilon ^{\prime }(t)p(t)^{2}dx.
\label{eq:VariableEnergyEq1}
\end{equation}%
The RHS suggests the following:

\textit{Decreasing }$\varepsilon $\textit{\ (}$\varepsilon ^{\prime }(t)<0$%
\textit{) acts to decrease the }$L^{2}$\textit{\ norm of }$u$\textit{\ and }$%
p$\textit{\ while increasing \ }$\varepsilon $\textit{\ (}$\varepsilon
^{\prime }(t)>0$\textit{) acts to increase the }$\mathit{L}^{2}$\textit{\
norm of }$u$\textit{\ and }$p$\textit{.}

Thus it seems like an example of instability would be simple to generate by
taking a solution with large pressure, small velocity, small $\nu $ and $%
\varepsilon ^{\prime }(t)>>\varepsilon (t)$. However, consider next the
equation for pressure fluctuations about a rest state. Beginning with 
\begin{equation}
\partial _{t}u+\nabla p=0{\text{ and }}\varepsilon (t)\partial _{t}p+\nabla
\cdot u=0,
\end{equation}%
eliminate the velocity in the standard manner for deriving the acoustic
equation. This yields the following induced equation for acoustic pressure
oscillations 
\begin{equation*}
\left( \varepsilon (t)p_{t}\right) _{t}-\triangle p=0.
\end{equation*}%
Oddly, $\varepsilon (t)=t$ (increasing) occurs in \cite{L84}. Multiplying by 
$p_{t}$ and integrating yields 
\begin{equation}
\frac{d}{dt}\int \varepsilon (t)(p_{t})^{2}+|\nabla p|^{2}dx=\mathbf{-}\int
\varepsilon _{t}(t)(p_{t})^{2}dx.  \label{eq:EnergyEq2}
\end{equation}%
The RHS of (\ref{eq:EnergyEq2}) yields the nearly opposite prediction that

\textit{Decreasing }$\varepsilon $\textit{\ (}$\varepsilon ^{\prime }(t)<0$%
\textit{) acts to increase the }$L^{2}$\textit{\ norm of }$p_{t}$\textit{\
and }$\nabla p$\textit{\ while increasing \ }$\varepsilon $\textit{\ (}$%
\varepsilon ^{\prime }(t)>0$\textit{) acts to increase the }$L^{2}$\textit{\
norm of }$p_{t}$\textit{\ and }$\nabla p$\textit{.}

The analytical conclusion is that long time stability of the standard AC
method with variable-$\varepsilon ,k$ is a murky open problem.

\section{Analysis of the variable-$\protect\varepsilon$ continuum AC Model}

The last subsection suggests that insight into the new model may be obtained
through analysis of its continuum analog without the assumption of small
fluctuations about a rest state. Accordingly, this section considers the
pure Cauchy problem, $\Omega ={{\mathbb{R}}}^{3}$, for%
\begin{equation}
\begin{cases}
& \partial _{t}u^{\varepsilon }+\nabla p^{\varepsilon }=\nu \Delta
u^{\varepsilon }-(u^{\varepsilon }\cdot \nabla )u^{\varepsilon }-{\frac{1}{2}%
}(\nabla \cdot u^{\varepsilon })u^{\varepsilon }+f^{\varepsilon } \\ 
& \partial _{t}(\varepsilon (t)p^{\varepsilon })-{\frac{1}{2}}\varepsilon
_{t}(t)p^{\varepsilon }+\nabla \cdot u^{\varepsilon }=0.%
\end{cases}%
\end{equation}%
To explain the change of the pressure term in the continuity equation from $%
\varepsilon p_{t}^{\varepsilon }$ to $\partial _{t}(\varepsilon
(t)p^{\varepsilon })-{\frac{1}{2}}\varepsilon _{t}(t)p^{\varepsilon }$, note
that 
\begin{equation*}
\left( {\frac{1}{2}}\varepsilon (p^{\varepsilon })^{2}\right)
_{t}=p^{\varepsilon }\left[ (\varepsilon p^{\varepsilon })_{t}-\frac{1}{2}%
\varepsilon _{t}p^{\varepsilon }\right] =p^{\varepsilon }\left[ \varepsilon
p_{t}^{\varepsilon }+{\frac{1}{2}}\varepsilon _{t}p^{\varepsilon }\right] .
\end{equation*}%
This can equivalently be formulated as $\frac{1}{2}(\varepsilon
p^{\varepsilon })_{t}+\frac{1}{2}\varepsilon p_{t}^{\varepsilon }$\ since 
\begin{equation*}
\left( {\frac{1}{2}}\varepsilon (p^{\varepsilon })^{2}\right)
_{t}=p^{\varepsilon }\frac{1}{2}\left[ (\varepsilon p^{\varepsilon
})_{t}+\varepsilon p_{t}^{\varepsilon }\right] .
\end{equation*}

We will first recall the notion of Leray weak solution of the NS equation,
and then derive the basic energy estimate for the new AC system (1.3), which
will lead to the appropriate assumptions on initial conditions. We then
introduce the assumption on the variable $\varepsilon (t)$, based on which
we perform a dispersive approach to obtain the Strichartz estimate for the
pressure.

\subsection{Leray weak solution for NSE}

We analyze the $\varepsilon \rightarrow 0$ limit of the continuum AC model %
\eqref {ACm}. Since we will be focused on the convergence of the
approximated system to a weak solution of the NSE, from now on we will for
simplicity take $\nu =1$ and $f=0$. The inclusion of a body force and a
different value of the kinematic viscosity adds no technical difficulty to
the analysis.

Let us recall the notion of a Leray weak solution (see, for e.g. Lions \cite%
{lions1996mathematical} and Temam \cite{temam2001book}) of the NSE.

\begin{definition}
We say that $u\in L^{\infty }([0,T];L^{2}({{\mathbb{R}}}^{3}))\cap
L^{2}([0,T];\dot{H}^{1}({{\mathbb{R}}}^{3}))$ is a Leray weak solution of
the NS equation if it satisfies \eqref {NS} in the sense of distribution for
all test functions $\varphi \in C_{0}^{\infty }([0,T]\times {{\mathbb{R}}}%
^{3})$ with $\nabla \cdot \varphi =0$ and moreover the following energy
inequality holds for every $t\in \lbrack 0,T]$ 
\begin{equation}
\begin{split}
{\frac{1}{2}}& \int_{{{\mathbb{R}}}^{3}}|u(t,x)|^{2}\,dx+\nu
\int_{0}^{t}\int_{{{\mathbb{R}}}^{3}}|\nabla u(s,x)|^{2}\,dxds \\
& \leq {\frac{1}{2}}\int_{{{\mathbb{R}}}^{3}}|u(0,x)|^{2}\,dx.
\end{split}
\label{energyinequ}
\end{equation}
\end{definition}

\subsection{Energy estimates}

We can easily verify that system \eqref {ACm} obeys the classical energy
type estimate.

\begin{theorem}
\label{thm_energy_m} Let $(u^{\varepsilon },p^{\varepsilon })$ be a strong solution
to \eqref {ACm} on $[0,T]$. Then it follows that for all $t\in [0, T]$
\begin{equation}  \label{energyestm}
E(t)+\int _0^t\int _{{{\mathbb{R}}}^3}|\nabla u^{\varepsilon
}(s,x)|^2\,dxds=E(0),
\end{equation}
where 
\begin{equation}  \label{energyfunc}
E(t)={\frac{1}{2}}\int _{{{\mathbb{R}}}^3}\left (|u^{\varepsilon
}(t,x)|^2+\varepsilon (t)|p^{\varepsilon }(t,x)|^2\right )\,dx.
\end{equation}
\end{theorem}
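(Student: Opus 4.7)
The plan is to test the two equations of \eqref{ACm} against $u^{\varepsilon}$ and $p^{\varepsilon}$ respectively, add the resulting identities, and integrate in time. Since $(u^{\varepsilon},p^{\varepsilon})$ is a strong solution on $[0,T]$, all manipulations below are justified and boundary terms at infinity vanish by the decay assumed of strong solutions.

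First I would multiply the momentum equation by $u^{\varepsilon}$ and integrate over ${\mathbb R}^3$. The viscous term yields $\int|\nabla u^{\varepsilon}|^2\,dx$ after one integration by parts; the pressure term becomes $-\int p^{\varepsilon}\,\nabla\cdot u^{\varepsilon}\,dx$; and the time derivative becomes $\tfrac{1}{2}\tfrac{d}{dt}\int |u^{\varepsilon}|^2\,dx$. The whole point of the skew-symmetrized convective term is that it vanishes when tested against $u^{\varepsilon}$: indeed
\begin{equation*}
\int_{{\mathbb R}^3} u^{\varepsilon}\cdot(u^{\varepsilon}\cdot\nabla)u^{\varepsilon}\,dx
= \tfrac{1}{2}\int_{{\mathbb R}^3}u^{\varepsilon}\cdot\nabla|u^{\varepsilon}|^2\,dx
= -\tfrac{1}{2}\int_{{\mathbb R}^3}(\nabla\cdot u^{\varepsilon})|u^{\varepsilon}|^2\,dx,
\end{equation*}
which exactly cancels $\int \tfrac{1}{2}(\nabla\cdot u^{\varepsilon})|u^{\varepsilon}|^2\,dx$. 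This leaves the intermediate identity
\begin{equation*}
\tfrac{1}{2}\tfrac{d}{dt}\int_{{\mathbb R}^3}|u^{\varepsilon}|^2\,dx
+\int_{{\mathbb R}^3}|\nabla u^{\varepsilon}|^2\,dx
-\int_{{\mathbb R}^3}p^{\varepsilon}\,\nabla\cdot u^{\varepsilon}\,dx = 0.
\end{equation*}

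Next I would multiply the modified continuity equation by $p^{\varepsilon}$ and integrate. The key algebraic identity motivating the new formulation, already noted in the excerpt, is
\begin{equation*}
p^{\varepsilon}\!\left[\partial_t(\varepsilon p^{\varepsilon})-\tfrac{1}{2}\varepsilon_t p^{\varepsilon}\right]
= p^{\varepsilon}\!\left[\varepsilon p^{\varepsilon}_t+\tfrac{1}{2}\varepsilon_t p^{\varepsilon}\right]
= \tfrac{d}{dt}\!\left(\tfrac{1}{2}\varepsilon(t)(p^{\varepsilon})^2\right).
\end{equation*}
Consequently
\begin{equation*}
\tfrac{1}{2}\tfrac{d}{dt}\int_{{\mathbb R}^3}\varepsilon(t)(p^{\varepsilon})^2\,dx
+\int_{{\mathbb R}^3}p^{\varepsilon}\,\nabla\cdot u^{\varepsilon}\,dx = 0.
\end{equation*}

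Adding the two intermediate identities cancels the cross term $\int p^{\varepsilon}\nabla\cdot u^{\varepsilon}\,dx$ and yields
\begin{equation*}
\tfrac{d}{dt}E(t)+\int_{{\mathbb R}^3}|\nabla u^{\varepsilon}|^2\,dx = 0,
\end{equation*}
with $E(t)$ as in \eqref{energyfunc}. Integrating this differential equality from $0$ to $t$ gives \eqref{energyestm}, completing the proof.

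There is no real obstacle here beyond bookkeeping: the entire argument hinges on (i) the skew-symmetry cancellation for the convective term, which is built into the problem, and (ii) the algebraic identity that was precisely the design criterion for replacing $\varepsilon p^{\varepsilon}_t$ by $\partial_t(\varepsilon p^{\varepsilon})-\tfrac{1}{2}\varepsilon_t p^{\varepsilon}$. The latter is exactly what makes the time-derivative term integrate into a pure variation of the energy $\tfrac{1}{2}\int\varepsilon(t)(p^{\varepsilon})^2\,dx$, removing the sign-indefinite $\varepsilon_t$ contribution that obstructed long-time stability in Section~2.
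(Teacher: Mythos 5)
Your proof is correct and is exactly the argument the paper has in mind: the paper omits the proof as "easily verified," but it displays precisely your key identity $\left({\frac{1}{2}}\varepsilon (p^{\varepsilon })^{2}\right)_{t}=p^{\varepsilon }\left[ (\varepsilon p^{\varepsilon })_{t}-\frac{1}{2}\varepsilon _{t}p^{\varepsilon }\right]$ immediately before the theorem as the motivation for the modified continuity equation. Your bookkeeping (skew-symmetry cancellation, cancellation of the $\int p^{\varepsilon}\nabla\cdot u^{\varepsilon}\,dx$ cross terms, integration in time) is the standard and intended completion.
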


Since we expect the approximated solution $(u^{\varepsilon}, p^{\varepsilon})$ to converge to the Leray solution, we require the finite energy constraint to be satisfied by $(u^{\varepsilon}, p^{\varepsilon})$. So following \cite{donatelli2006whole} we further restrict the initial condition to system \eqref {ACm}
(or \eqref {AC}) to satisfy 
\begin{equation}  \label{initialcond}
\begin{cases}
& u_0^{\varepsilon }:=u^{\varepsilon }(0,\cdot )\to u_0{{\text { strongly in 
}}}L^2({{\mathbb{R}}}^3){{\text { as }}}\varepsilon \to 0, \\ 
& \sqrt{\varepsilon }p_0^{\varepsilon }:=\sqrt{\varepsilon }p^{\varepsilon
}(0,\cdot )\to 0{{\text { strongly in }}}L^2({{\mathbb{R}}}^3){{\text { as }}%
}\varepsilon \to 0.%
\end{cases}%
\end{equation}

This way we can obtain the following uniform estimates which are similar to
those in \cite[Corollary 4.2]{donatelli2006whole}, and hence we omit the
proof.
\begin{corollary}
\label{cor_unif est} Under the assumptions of Theorem \ref{thm_energy_m},
together with \eqref {initialcond}, it follows that 
\begin{align}
&\sqrt{\varepsilon }p^{\varepsilon }\quad & {{\text {is bounded in }}}%
L^{\infty }([0,T];L^2({{\mathbb{R}}}^3)),  \label{pressure} \\
&\varepsilon p_t^{\varepsilon } & {{\text {is relatively compact in }}}%
H^{-1}([0,T]\times {{\mathbb{R}}}^3),  \label{pressure_t} \\
&\nabla u^{\varepsilon } & {{\text {is bounded in }}}L^2([0,T]\times {{%
\mathbb{R}}}^3),  \label{gradu} \\
&u^{\varepsilon } & {{\text { is bounded in }}}L^{\infty }([0,T];L^2({{%
\mathbb{R}}}^3))\cap L^2([0,T];L^6({{\mathbb{R}}}^3)),  \label{u} \\
&(u^{\varepsilon }\cdot \nabla )u^{\varepsilon } & {{\text {is bounded in }}}%
L^2([0,T];L^1({{\mathbb{R}}}^3))\cap L^1([0,T];L^{3/2}({{\mathbb{R}}}^3)),
\label{convective} \\
&(\nabla \cdot u^{\varepsilon })u^{\varepsilon } & {{\text {is bounded in }}}%
L^2([0,T];L^1({{\mathbb{R}}}^3))\cap L^1([0,T];L^{3/2}({{\mathbb{R}}}^3)).
\label{correction}
\end{align}
\end{corollary}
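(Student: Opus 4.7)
The plan is to extract all six bounds from the energy identity \eqref{energyestm}, the initial data hypothesis \eqref{initialcond}, standard Sobolev embedding and H\"older inequalities, and the modified continuity equation in \eqref{ACm}. Since system \eqref{ACm} differs from the constant-$\varepsilon$ AC model \eqref{AC} only by the additional lower-order term $-\tfrac12\varepsilon_t(t)p^\varepsilon$, the argument closely parallels that of Donatelli--Marcati \cite{donatelli2006whole}; the only genuinely new point is that this fluctuation term must be tamed, and this is where condition \eqref{condonepsilon} enters.

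First I would note that \eqref{initialcond} implies $E(0)\le C$ uniformly in $\varepsilon$. Plugging this into the energy equality \eqref{energyestm} delivers at once the $L^\infty([0,T];L^2)$ bound on $u^\varepsilon$, the $L^\infty([0,T];L^2)$ bound on $\sqrt{\varepsilon}\,p^\varepsilon$ (which is \eqref{pressure}), and the $L^2([0,T]\times{{\mathbb R}}^3)$ bound on $\nabla u^\varepsilon$ (which is \eqref{gradu}). The Sobolev embedding $\dot H^1({{\mathbb R}}^3)\hookrightarrow L^6({{\mathbb R}}^3)$ applied to the gradient bound then yields $u^\varepsilon\in L^2([0,T];L^6)$, completing \eqref{u}.

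For the nonlinear estimates \eqref{convective} and \eqref{correction}, I would apply H\"older in two different ways. For the $L^2([0,T];L^1)$ bound,
\[
\|(u^\varepsilon\!\cdot\!\nabla)u^\varepsilon\|_{L^1_x}\le \|u^\varepsilon\|_{L^2_x}\|\nabla u^\varepsilon\|_{L^2_x},
\]
and squaring, integrating in time, and using $u^\varepsilon\in L^\infty_tL^2_x$ and $\nabla u^\varepsilon\in L^2_tL^2_x$ closes the estimate. For the $L^1([0,T];L^{3/2})$ bound,
\[
\|(u^\varepsilon\!\cdot\!\nabla)u^\varepsilon\|_{L^{3/2}_x}\le \|u^\varepsilon\|_{L^6_x}\|\nabla u^\varepsilon\|_{L^2_x},
\]
and Cauchy--Schwarz in time together with $u^\varepsilon\in L^2_tL^6_x$ finishes. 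The argument for $(\nabla\!\cdot\!u^\varepsilon)u^\varepsilon$ is identical.

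Finally, for \eqref{pressure_t} I would rewrite the continuity equation of \eqref{ACm} as
\[
\varepsilon p_t^\varepsilon = -\nabla\cdot u^\varepsilon \;-\; \tfrac12\,\varepsilon_t(t)\,p^\varepsilon .
\]
The first term is the spatial divergence of an $L^2([0,T]\times{{\mathbb R}}^3)$-bounded function, hence lies in a bounded set of $L^2([0,T];H^{-1})$ and is relatively compact in $H^{-1}_{\mathrm{loc}}$ by Rellich--Kondrachov. For the remaining term I would factor
\[
\tfrac12\,\varepsilon_t\,p^\varepsilon \;=\; \tfrac12\,\frac{\varepsilon_t}{\sqrt{\varepsilon}}\,\bigl(\sqrt{\varepsilon}\,p^\varepsilon\bigr),
\]
observe that \eqref{condonepsilon} gives $\varepsilon_t/\sqrt{\varepsilon}=(\varepsilon_t/\varepsilon)\sqrt{\varepsilon}\to 0$ uniformly on $[0,T]$, and combine with \eqref{pressure} to conclude that this term tends to zero strongly in $L^\infty([0,T];L^2)$, hence is relatively compact in $H^{-1}$. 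The main obstacle, as in \cite{donatelli2006whole}, is not any individual bound but rather ensuring that the non-autonomous lower-order term does not destroy the compactness of $\varepsilon p_t^\varepsilon$; this is precisely what condition \eqref{condonepsilon} was engineered to guarantee.
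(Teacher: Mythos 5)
Your proposal is correct and follows exactly the standard route that the paper itself defers to (it omits the proof, citing the analogous Corollary 4.2 of Donatelli--Marcati): the energy identity \eqref{energyestm} plus \eqref{initialcond} gives the linear bounds, Sobolev embedding and H\"older give the nonlinear ones, and the continuity equation of \eqref{ACm} gives \eqref{pressure_t}. Your one genuinely useful observation is that the fluctuation term $\tfrac12\varepsilon_t p^{\varepsilon}$ forces an appeal to \eqref{condonepsilon} for the compactness of $\varepsilon p_t^{\varepsilon}$, a hypothesis the corollary's formal statement does not list but which is clearly intended.
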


\subsection{Acoustic pressure wave and Strichartz estimates}

\label{subsec_pressure}

Note that we can derive from system \eqref {ACm} that the pressure $%
p^{\varepsilon }$ satisfies the following wave equations 
\begin{equation}  \label{PWm}
(\varepsilon p^{\varepsilon })_{tt}-\left ({\frac{1}{2}}\varepsilon
_tp^{\varepsilon }\right )_t-\Delta p^{\varepsilon }=-\Delta (\nabla \cdot
u^{\varepsilon })+\nabla \cdot \left [(u^{\varepsilon }\cdot \nabla
)u^{\varepsilon }+{\frac{1}{2}}(\nabla \cdot u^{\varepsilon })u^{\varepsilon
}\right ].
\end{equation}
Our assumption on the relaxation parameter $\varepsilon (t)$ is 
\begin{equation}
\varepsilon (t)\in C^2([0,T]),\quad 0<c\epsilon \le \varepsilon (t)\leq
C\epsilon ,\quad \varepsilon _{t}(t),\varepsilon _{tt}(t)\sim o(\varepsilon
(t)),  \label{assumponepsilon}
\end{equation}
for $t\in [0,T]$, and $\epsilon $, $c$ and $C$ are some positive constants.
The last condition in the above is understood as 
\begin{equation*}
\lim _{\epsilon \to 0}{\frac{\varepsilon _t(t)}{\varepsilon (t)}}=0,\quad
\lim _{\epsilon \to 0}{\frac{\varepsilon _{tt}(t)}{\varepsilon (t)}}=0.
\end{equation*}

From the assumption \eqref {assumponepsilon} we may write 
\begin{equation}  \label{rewriteepsilon}
\varepsilon (t)=\epsilon A(t)
\end{equation}
for some function $A(t)$ satisfying 
\begin{equation}  \label{assumponA}
A\in C^2([0,T]),\quad c\le A(t)\le C,\quad A^{\prime }(t),A^{\prime \prime
}(t)\sim o(A(t)).
\end{equation}
Performing the following rescaling 
\begin{equation}  \label{rescale}
\tau ={\frac{t}{\sqrt{\epsilon }}},\ \ \tilde {p}(\tau ,x)=p^{\varepsilon }(%
\sqrt{\epsilon }\tau ,x),\ \ \tilde {u}(\tau ,x)=u^{\varepsilon }(\sqrt{%
\epsilon }\tau ,x),\ \ \tilde {A}(\tau )=A(\sqrt{\epsilon }\tau ),
\end{equation}
and plugging into \eqref {PWm} we obtain 
\begin{equation}  \label{PWnewm}
(\tilde {A}\tilde {p})_{\tau \tau }-\Delta \tilde {p}-\left ({\frac{1}{2}}%
\tilde {A}_{\tau }\tilde {p}\right )_{\tau }=-\Delta (\nabla \cdot \tilde {u}%
)+\nabla \cdot \left [(\tilde {u}\cdot \nabla )\tilde {u}+{\frac{1}{2}}%
(\nabla \cdot \tilde {u})\tilde {u}\right ].
\end{equation}

Note that here the wave operator contains time-dependent coefficients. The
space-time Strichartz estimates involving variable coefficients were
established by Mockenhaupt et al \cite{Mockenhaupt1993} when the
coefficients are smooth. Operators with $C^{1,1}$ coefficients were first
considered by Smith \cite{Smith1998} using wave packets. An alternative
method based on the FBI transform was later employed by Tataru \cite%
{tataru2000,tataru2001,tataru2002} to prove the full range of Strichartz
estimates under weaker assumptions. Here we briefly state the estimates we
need for \eqref {PWnewm}.

\begin{theorem}
\textbf{The results of Tataru \cite{tataru2002}}. \label{thm_strichartz}
Let $w$ be a (weak) solution of the following wave equations in $[0,T]\times 
{{\mathbb{R}}}^{n}$ 
\begin{equation}
\begin{cases}
& (Aw)_{tt}-\left( {\frac{1}{2}}A_{t}w\right) _{t}-\Delta w=F(t,x), \\ 
& w(0,\cdot )=w_{0},\quad w_{t}(0,\cdot )=w_{1}.%
\end{cases}
\label{wavem}
\end{equation}%
Assume that $A(t)>0$ and $\nabla _{t,x}^{2}A\in L^{1}([0,T];L^{\infty }({{%
\mathbb{R}}}^{n}))$. Then the following Strichartz estimates hold 
\begin{equation}
\Vert w\Vert _{L_{t}^{q}L_{x}^{r}}+\Vert w_{t}\Vert
_{L_{t}^{q}W_{x}^{-1,r}}\lesssim \Vert w_{0}\Vert _{\dot{H}_{x}^{\gamma
}}+\Vert w_{1}\Vert _{\dot{H}_{x}^{\gamma -1}}+\Vert F\Vert _{L_{t}^{\tilde{q%
}^{\prime }}L_{x}^{\tilde{r}^{\prime }}},  \label{strichartz_gen}
\end{equation}%
where $(q,r,\gamma )$ and $(\tilde{q}^{\prime },\tilde{r}^{\prime })$
satisfy 
\begin{equation}
\begin{cases}
& 2\leq q,r\leq \infty , \\ 
&  \\ 
& (q,r,\gamma ),(\tilde{q}^{\prime },\tilde{r}^{\prime },\gamma )\neq
(2,\infty ,1),{{\text{ when }}}n=3, \\ 
&  \\ 
& \displaystyle{\frac{1}{q}}+{\frac{n}{r}}={\frac{n}{2}}-\gamma ={\frac{1}{%
\tilde{q}^{\prime }}}+{\frac{n}{\tilde{r}^{\prime }}}-2, \\ 
&  \\ 
& \displaystyle{\frac{2}{q}}+{\frac{n-1}{r}}\leq {\frac{n-1}{2}},\quad {%
\frac{2}{\tilde{q}}}+{\frac{n-1}{\tilde{r}}}\leq {\frac{n-1}{2}}.%
\end{cases}
\label{admissible}
\end{equation}
\end{theorem}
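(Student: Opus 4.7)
The plan is to reduce the problem to the setting of the Keel--Tao abstract $TT^{*}$ Strichartz machinery, which derives mixed-norm space-time estimates from two ingredients: an $L^{2}$ energy bound and a fixed-time dispersive estimate on frequency-localized solutions. The first preparatory step is to rewrite the equation in a cleaner form: expanding $(Aw)_{tt}-\tfrac{1}{2}(A_{t}w)_{t}=A w_{tt}+\tfrac{3}{2}A_{t}w_{t}+\tfrac{1}{2}A_{tt}w$ and dividing by the positive function $A(t)$, the equation becomes
\[
w_{tt}-A(t)^{-1}\Delta w+b(t)\,w_{t}+c(t)\,w=A(t)^{-1}F,
\]
with $b=3A_{t}/(2A)$ and $c=A_{tt}/(2A)$. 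The hypothesis $\nabla_{t,x}^{2}A\in L^{1}([0,T];L^{\infty})$ supplies $L^{1}_{t}$ bounds on $b$ and $c$, so these zeroth- and first-order terms can be absorbed via a Duhamel/Gronwall perturbative argument, reducing matters to the principal operator $\partial_{t}^{2}-A(t)^{-1}\Delta$. A Liouville-type time change $\tau=\int_{0}^{t}A(s)^{-1/2}\,ds$ may further be applied to normalize the leading coefficient, at the cost of introducing additional lower-order terms which are handled by the same perturbative absorption.

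The heart of the argument is to produce, at each dyadic frequency scale $2^{k}$, a parametrix for the half-wave propagator $e^{it\sqrt{-A^{-1}\Delta}}P_{k}$ with spatial Fourier support in $|\xi|\sim 2^{k}$. With only $C^{1,1}$ regularity of the coefficients, one cannot build a global FIO parametrix by classical symbol calculus; instead one decomposes the frequency-localized datum into wave packets of spatial width $2^{-k/2}$ and angular resolution $2^{-k/2}$, in the spirit of Smith \cite{Smith1998} and of Tataru \cite{tataru2000,tataru2001,tataru2002}, whose FBI transform approach I would follow. Each wave packet is transported approximately along the bicharacteristic flow of the principal symbol $\tau^{2}-A(t)^{-1}|\xi|^{2}$; the $C^{1,1}$ hypothesis is precisely what guarantees that this Hamiltonian flow remains a bi-Lipschitz canonical transformation over the time scale $2^{-k}$ of validity of the packet description. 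Summing the packet contributions and estimating by stationary phase yields the dispersive bound
\[
\bigl\|e^{it\sqrt{-A^{-1}\Delta}}P_{k}f\bigr\|_{L^{\infty}_{x}}\lesssim |t|^{-(n-1)/2}\,2^{k(n+1)/2}\,\|f\|_{L^{1}_{x}},
\]
initially on short time intervals which are then patched to cover $[0,T]$ using the uniform lower bound on $A$ and the $L^{1}_{t}L^{\infty}_{x}$ control of $\nabla^{2}_{t,x}A$.

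With the dispersive bound and the (essentially trivial) $L^{2}$ energy conservation in hand, the Keel--Tao abstract theorem produces the homogeneous Strichartz estimate \eqref{strichartz_gen} for all admissible $(q,r,\gamma)$ satisfying \eqref{admissible}, modulo the excluded endpoint $(2,\infty,1)$ in $n=3$; the inhomogeneous version with non-dual exponents $(\tilde{q}',\tilde{r}')$ then follows from the homogeneous estimates by $TT^{*}$ duality combined with the Christ--Kiselev lemma off the diagonal. The main obstacle, and indeed the technical core of Tataru's work, is the parametrix construction under merely $C^{1,1}$ regularity of $A$: one cannot iterate symbolic calculus indefinitely, and one must show, via carefully chosen FBI phase-space localizations and dyadic time steps, that a single parametrix carries $L^{2}$-bounded error on each time piece. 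In the present application $A\in C^{2}$ is strictly more regular than the minimal assumption, which would in principle allow a somewhat simpler presentation, but the conceptual difficulty is unchanged: no Kirchhoff-type explicit representation is available when $A$ depends on time, so the refined bilinear estimates familiar from the constant-coefficient case must be recovered indirectly through phase-space methods.
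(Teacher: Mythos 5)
The first thing to note is that the paper does not prove this statement at all: Theorem \ref{thm_strichartz} is stated explicitly as ``the results of Tataru \cite{tataru2002}'' and is imported as a black box (the surrounding text says only ``we briefly state the estimates we need''). So there is no in-paper proof to match your argument against; the relevant comparison is with the literature you are summarizing. At that level your outline is broadly faithful: the expansion $(Aw)_{tt}-\tfrac12(A_tw)_t = Aw_{tt}+\tfrac32 A_t w_t+\tfrac12 A_{tt}w$ is correct, the reduction to the principal operator, the wave-packet/FBI parametrix of Smith \cite{Smith1998} and Tataru \cite{tataru2000,tataru2001,tataru2002}, the frequency-localized dispersive bound with the $|t|^{-(n-1)/2}2^{k(n+1)/2}$ scaling, and the passage through the Keel--Tao machinery \cite{KT98} plus the Christ--Kiselev lemma for non-dual inhomogeneous exponents are exactly the standard route to \eqref{strichartz_gen}--\eqref{admissible}.

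As a \emph{proof}, however, the proposal has a genuine gap, and you acknowledge it yourself: the entire technical content of the theorem --- the construction of a parametrix with $L^2$-bounded error under $C^{1,1}$ (here $\nabla^2_{t,x}A\in L^1_tL^\infty_x$) regularity, and the verification that the packet superposition actually obeys the stated dispersive estimate uniformly on $[0,T]$ --- is invoked rather than supplied. Two smaller points also need care. First, ``absorbing'' the terms $b(t)w_t+c(t)w$ by ``Duhamel/Gronwall'' is not immediate in mixed norms $L^q_tL^r_x$ with $q<\infty$; one needs either smallness of $\Vert b\Vert_{L^1}$, $\Vert c\Vert_{L^1}$ on subintervals together with a partition of $[0,T]$ (this is essentially what the paper itself does later, in the proof of Theorem \ref{thm_est pressure}, where the analogous coefficients $M,N$ are made $o(1)$ via assumption \eqref{assumponepsilon}), or one must build the lower-order terms into the parametrix. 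Second, your closing remark that the refined bilinear estimates ``must be recovered indirectly through phase-space methods'' is at odds with the paper's own discussion in Section 1.2, which states that such bilinear refinements do \emph{not} seem to be available for the non-autonomous operator --- that unavailability is precisely the motivation for the additional time rescaling the authors perform. Since the paper treats the theorem as a citation, presenting your argument as a proof sketch attributed to \cite{tataru2002} (rather than as a self-contained proof) would be the appropriate posture.
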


For our purpose, $n=3$, and we will take $(q,r)=(4,4)$, $(\tilde{q}^{\prime
},\tilde{r}^{\prime })=(1,3/2)$, and $\gamma =1/2$. This way the above
Strichartz estimate becomes 
\begin{equation}
\Vert w||_{L_{t,x}^{4}}+\Vert w_{t}\Vert _{L_{t}^{4}W_{x}^{-1,4}}\lesssim
\Vert w_{0}\Vert _{\dot{H}_{x}^{\frac{1}{2}}}+\Vert w_{1}\Vert _{\dot{H}%
_{x}^{-{\frac{1}{2}}}}+\Vert F\Vert _{L_{t}^{1}L_{x}^{\frac{3}{2}}}.
\label{strichartz}
\end{equation}

Following \cite{donatelli2006whole}, we decompose the pressure as $\tilde {p}%
=\tilde {p}_1+\tilde {p}_2$ where 
\begin{equation}  \label{eqnp_1}
\begin{cases}
& (\tilde {A}\tilde {p}_1)_{\tau \tau }-\left ({\frac{1}{2}}\tilde {A}_{\tau
}\tilde {p}_1\right )_{\tau }-\Delta \tilde {p}_1=\nabla \cdot \left [(%
\tilde {u}\cdot \nabla )\tilde {u}+{\frac{1}{2}}(\nabla \cdot \tilde {u})%
\tilde {u}\right ]=:\nabla \cdot \tilde {F}, \\ 
& \tilde {p}_1(x,0)=\tilde {p}(x,0),\quad \partial _{\tau }\tilde {p}%
_1(x,0)=\partial _{\tau }\tilde {p}(x,0),%
\end{cases}%
\end{equation}

\begin{equation}
\begin{cases}
& (\tilde{A}\tilde{p}_{2})_{\tau \tau }-\left( {\frac{1}{2}}\tilde{A}_{\tau }%
\tilde{p}_{2}\right) _{\tau }-\Delta \tilde{p}_{2}=-\Delta (\nabla \cdot 
\tilde{u}), \\ 
& \tilde{p}_{2}(x,0)=\partial _{\tau }\tilde{p}_{2}(x,0)=0.%
\end{cases}
\label{eqnp_2}
\end{equation}%
Applying Theorem \ref{thm_strichartz} to the above two systems and
unraveling the change-of-\newline
variables \eqref {rescale} we obtain the following estimates.

\begin{theorem}
\label{thm_est pressure} Let $(u^{\varepsilon },p^{\varepsilon })$ be a strong
solution of the Cauchy problem on $[0,T]$ to system \eqref {ACm} with initial data $%
(u_{0}^{\varepsilon },p_{0}^{\varepsilon })$ satisfying \eqref {initialcond}%
. Assume also that $\varepsilon (t)$ satisfies (\ref{condonepsilon}). Then
for $\epsilon $ small enough the following estimate holds. 
\begin{equation}
\begin{split}
\epsilon ^{{\frac{3}{8}}}\Vert p^{\varepsilon }\Vert _{L_{t}^{4}W_{x}^{-{2}%
,4}}+\epsilon ^{{\frac{7}{8}}}\Vert p_{t}^{\varepsilon }\Vert
_{L_{t}^{4}W_{x}^{-{3},4}}\lesssim & \ \sqrt{\epsilon }\Vert
p_{0}^{\varepsilon }\Vert _{L_{x}^{2}}+\Vert \nabla \cdot u_{0}^{\varepsilon
}\Vert _{H_{x}^{-1}}+\sqrt{T}\Vert \nabla \cdot u^{\varepsilon }\Vert
_{L_{t,x}^{2}} \\
& +\left\Vert (u^{\varepsilon }\cdot \nabla )u^{\varepsilon }+{\frac{1}{2}}%
(\nabla \cdot u^{\varepsilon })u^{\varepsilon }\right\Vert _{L_{t}^{1}L_{x}^{%
{\frac{3}{2}}}}.
\end{split}
\label{strichartzforp}
\end{equation}
\end{theorem}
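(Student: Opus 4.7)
The approach adapts the dispersive strategy of \cite{donatelli2006whole} to the non-autonomous setting. The first step is the time rescaling \eqref{rescale}, which turns \eqref{PWm} into \eqref{PWnewm}: a wave equation whose principal part resembles $\partial_{\tau\tau}-\Delta$ perturbed by the lower-order term $(\tfrac{1}{2}\tilde{A}_\tau \tilde{p})_\tau$ and by the slowly-varying coefficient $\tilde{A}(\tau)=A(\sqrt{\epsilon}\,\tau)$. Under the assumption \eqref{condonepsilon} one has $c\le \tilde{A}\le C$, together with $\tilde{A}_\tau = \sqrt{\epsilon}\,A'(\sqrt{\epsilon}\,\tau)=o(\sqrt{\epsilon})$ and $\tilde{A}_{\tau\tau}=\epsilon A''(\sqrt{\epsilon}\,\tau)=o(\epsilon)$; hence $\|\tilde{A}_{\tau\tau}\|_{L^1_\tau L^\infty_x}$ on the rescaled interval $[0,T/\sqrt{\epsilon}]$ is $o(\sqrt{\epsilon})$. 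This smallness verifies the hypothesis of Theorem \ref{thm_strichartz} uniformly in $\epsilon$ and permits the lower-order term to be moved to the right-hand side as a small perturbation absorbable by the Strichartz estimate.

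\textbf{Piecewise Strichartz.} Next I would apply Theorem \ref{thm_strichartz} with admissible exponents $(q,r,\gamma)=(4,4,1/2)$ and $(\tilde{q}',\tilde{r}')=(1,3/2)$ to the splitting $\tilde{p}=\tilde{p}_1+\tilde{p}_2$ from \eqref{eqnp_1}--\eqref{eqnp_2}, after preconditioning each piece by suitable negative powers of $-\Delta$ so that the derivative structure of the forcings matches the admissibility conditions. For $\tilde{p}_1$: the divergence in the forcing $\nabla\cdot\tilde{F}$ is absorbed via $(-\Delta)^{-1}$, using the $L^{3/2}$-boundedness of Riesz-type operators, producing control of $\|\tilde{p}_1\|_{L^4_\tau W^{-2,4}_x}+\|\partial_\tau\tilde{p}_1\|_{L^4_\tau W^{-3,4}_x}$ by the initial data and $\|\tilde{F}\|_{L^1_\tau L^{3/2}_x}$. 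The continuity equation in \eqref{ACm} evaluated at $\tau=0$ supplies $\partial_\tau\tilde{p}(0)=-\epsilon^{-1/2}\tilde{A}(0)^{-1}\nabla\cdot u_0^\varepsilon+O(\sqrt{\epsilon})$, which together with $\tilde{p}(0)=p_0^\varepsilon$ feeds the initial-data contributions $\|p_0^\varepsilon\|_{L^2}$ and $\epsilon^{-1/2}\|\nabla\cdot u_0^\varepsilon\|_{H^{-1}}$. For $\tilde{p}_2$: applying $(-\Delta)^{-1}$ converts the forcing $-\Delta(\nabla\cdot\tilde{u})$ into $-\nabla\cdot\tilde{u}$, which lies in $L^2_{\tau,x}$ by Corollary \ref{cor_unif est}; H\"older in time over the interval of length $T/\sqrt{\epsilon}$ yields $\|\nabla\cdot\tilde{u}\|_{L^1_\tau L^2_x}\le (T/\sqrt{\epsilon})^{1/2}\|\nabla\cdot\tilde{u}\|_{L^2_{\tau,x}}$, and Strichartz then delivers the $(T/\sqrt{\epsilon})^{1/2}\|\nabla\cdot\tilde{u}\|_{L^2_{\tau,x}}$ contribution.

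\textbf{Unraveling and main obstacle.} Finally, I would unwind \eqref{rescale} to translate back to $(t,x)$ variables: the identity $\|p^\varepsilon\|_{L^4_tW^{-2,4}_x}=\epsilon^{1/8}\|\tilde{p}\|_{L^4_\tau W^{-2,4}_x}$ (from $dt=\sqrt{\epsilon}\,d\tau$) together with $\partial_t=\epsilon^{-1/2}\partial_\tau$ produce the weights $\epsilon^{3/8}$ and $\epsilon^{7/8}$ on the left-hand-side terms of \eqref{strichartzforp}; matching bookkeeping on the right-hand side yields the $\sqrt{\epsilon}$ multiplier on $\|p_0^\varepsilon\|_{L^2}$ and the $\sqrt{T}$ multiplier on $\|\nabla\cdot u^\varepsilon\|_{L^2_{t,x}}$. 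The main technical obstacle is the non-autonomous nature of the wave operator: the Kirchhoff-based proofs of Strichartz estimates do not apply, and we must invoke Tataru's variable-coefficient extension \cite{tataru2002} (Theorem \ref{thm_strichartz}). This in turn requires that the variable coefficient be treated as a small perturbation of the classical wave operator, and is exactly where \eqref{condonepsilon} enters: it forces $\tilde{A}_\tau,\tilde{A}_{\tau\tau}=o(1)$, so the lower-order contributions can be absorbed into the forcing. Relaxing \eqref{condonepsilon} would let these perturbations dominate and destroy the Strichartz bound, which is precisely why this smoothness hypothesis on $\varepsilon(t)$ is central.
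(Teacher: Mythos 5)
Your overall strategy --- rescale time by $\sqrt{\epsilon}$ as in \eqref{rescale}, split $\tilde{p}=\tilde{p}_1+\tilde{p}_2$ as in \eqref{eqnp_1}--\eqref{eqnp_2}, estimate $\tilde{p}_1$ via the variable-coefficient Strichartz estimate with $(\tilde{q}',\tilde{r}')=(1,3/2)$, and unravel the scaling to produce the weights $\epsilon^{3/8}$, $\epsilon^{7/8}$, $\sqrt{\epsilon}$ and $\sqrt{T}$ --- matches the paper, and your bookkeeping of the powers of $\epsilon$ is correct. The genuine gap is in the treatment of $\tilde{p}_2$. You propose to feed the preconditioned forcing $-\nabla\cdot\tilde{u}$ into Theorem \ref{thm_strichartz} through the norm $L^1_\tau L^2_x$, but the pair $(\tilde{q}',\tilde{r}')=(1,2)$ is \emph{not} admissible for the output $(q,r,\gamma)=(4,4,1/2)$: the scaling relation in \eqref{admissible} forces $\tfrac{1}{\tilde{q}'}+\tfrac{3}{\tilde{r}'}=3$, i.e.\ $(1,3/2)$, whereas $(1,2)$ gives $5/2$. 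The estimate you actually need --- forcing in $L^1_tL^2_x$, output in $L^4_{t,x}$ --- is the non-dual-pair bound \eqref{Sogge1}--\eqref{Sogge2}, which is available (via Sogge) only for the \emph{classical} wave operator, not for the non-autonomous operator in \eqref{PWnewm}; this is precisely the obstruction the paper describes in Section 1.2.

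The paper's resolution, which your proposal omits entirely, is a second change of time variable $\tau=\beta(s)$ with $\beta'(s)=\sqrt{a(s)}$, converting \eqref{eqnp_2} into the constant-coefficient problem \eqref{eqnnewp_2}, $\bar{p}_{ss}-\Delta\bar{p}=-\Delta(\nabla\cdot\bar{u})-M\bar{p}_s-N\bar{p}$. Only then do \eqref{Sogge1}--\eqref{Sogge2} apply, and the lower-order terms are absorbed not ``by the Strichartz estimate'' in the $L^4$ norms (the norms do not match) but through the energy-type bound on $\Vert\bar{p}_s\Vert_{L^\infty_sL^2_x}$ from \eqref{Sogge2}, combined with $\Vert M\Vert_{L^1_s}=o(1)$, the device $\bar{p}(s)=\int_0^s\bar{p}_s\,ds$, and the integrability of $s|N(s)|$ coming from $N=O(\epsilon)$ --- this is where \eqref{condonepsilon} does its real work. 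Relatedly, your claim that the term $(\tfrac{1}{2}\tilde{A}_\tau\tilde{p})_\tau$ must be moved to the right-hand side and absorbed is unnecessary for $\tilde{p}_1$: Theorem \ref{thm_strichartz} is stated for the full operator $(Aw)_{tt}-(\tfrac{1}{2}A_tw)_t-\Delta w$, so no absorption is needed there. To repair your argument you would have to either prove the $L^1_tL^2_x\to L^4_{t,x}$ inhomogeneous estimate directly for the variable-coefficient operator (which the paper states is not available) or insert the second rescaling.
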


\begin{proof}
We first apply \eqref {strichartz} with $w=\Delta ^{-1/2}\tilde{p}_{2}$ to
obtain 
\begin{equation}
\Vert \tilde{p}_{1}\Vert _{L_{\tau }^{4}W_{x}^{-1,4}}+\Vert \partial _{\tau }%
\tilde{p}_{1}\Vert _{L_{\tau }^{4}W_{x}^{-2,4}}\lesssim \Vert \tilde{p}%
(x,0)\Vert _{\dot{H}_{x}^{-{\frac{1}{2}}}}+\Vert \partial _{\tau }\tilde{p}%
(x,0)\Vert _{\dot{H}_{x}^{-{\frac{3}{2}}}}+\Vert \tilde{F}\Vert _{L_{\tau
}^{1}L_{x}^{{\frac{3}{2}}}}.  \label{tildep_1}
\end{equation}%
The estimate for $\tilde{p}_{2}$ requires some more effort since from the
energy we only have $L^{2}$-control of $\nabla \cdot u^{\varepsilon }$. For
that, we further introduce a time-scale change 
\begin{equation*}
\tau =\beta (s),\quad \bar{p}(s,x)=\tilde{p}_{2}(\beta (s),x),\quad \bar{u}%
(s,x)=\tilde{u}(\beta (s),x),\quad a(s)=\tilde{A}(\beta (s)).
\end{equation*}%
By choosing $\beta $ such that 
\begin{equation}
\beta ^{\prime }(s)=\sqrt{a(s)}>0,\quad \beta (0)=0,  \label{beta}
\end{equation}%
we can rewrite \eqref {eqnp_2} as 
\begin{equation}
\begin{cases}
& \bar{p}_{ss}-\Delta \bar{p}=-\Delta (\nabla \cdot \bar{u})-M\bar{p}_{s}-N%
\bar{p}, \\ 
& \bar{p}(x,0)=\bar{p}_{s}(x,0)=0,%
\end{cases}
\label{eqnnewp_2}
\end{equation}%
where 
\begin{equation}
M(s):={\frac{3a^{\prime }}{2a}}-{\frac{a^{\prime }}{2\sqrt{a}}},\quad N(s):={%
\frac{1}{2}}\left( {\frac{a^{\prime \prime }}{a}}-{\frac{(a^{\prime 2}}{%
2a^{3/2}}}\right) .  \label{MN}
\end{equation}%
The advantage of this transformation is that one can treat the transformed
problem as the classical inhomogeneous wave equation where $\bar{p}$ and $%
\bar{p}_{s}$ on the right-hand side are considered as forcing terms. A
direction computation yields that 
\begin{equation*}
M(s)={\frac{3-\sqrt{A(t)}}{2\sqrt{A(t)}}}A^{\prime }(t)\sqrt{\epsilon }%
,\qquad N(s)={\frac{\left[ 2A^{\prime \prime }(t)\sqrt{A(t)}+(1-\sqrt{A(t)}%
)(A^{\prime 2}\right] }{4A(t)}}\epsilon .
\end{equation*}%
Here $t=\sqrt{\epsilon }\beta (s)$. 

Applying the Strichartz estimates for the classical wave operator in three
spatial dimensions we have (see, for instance \cite{S95}) 
\begin{align}
\Vert \bar{p}\Vert _{L_{s,x}^{4}}+\Vert \bar{p}_{s}\Vert
_{L_{s}^{4}W_{x}^{-1,4}}& \leq C_{1}\left( \Vert \Delta (\nabla \cdot \bar{u}%
)+M\bar{p}_{s}+N\bar{p}\Vert _{L_{s}^{1}L_{x}^{2}}\right) ,  \label{Sogge1}
\\
\Vert \bar{p}\Vert _{L_{s}^{\infty }L_{x}^{6}}+\Vert \bar{p}\Vert
_{L_{x}^{\infty }\dot{H}_{x}^{1}}+\Vert \bar{p}_{s}\Vert _{L_{s}^{\infty
}L_{x}^{2}}& \leq C_{2}\left( \Vert \Delta (\nabla \cdot \bar{u})+M\bar{p}%
_{s}+N\bar{p}\Vert _{L_{s}^{1}L_{x}^{2}}\right) ,  \label{Sogge2}
\end{align}%
where $C_{1}$ and $C_{2}$ are universal constants. It's easily seen that the
following bound holds 
\begin{equation}
\Vert M\bar{p}_{s}\Vert _{L_{s}^{1}L_{x}^{2}}\leq \Vert M\Vert
_{L_{s}^{1}}\Vert \bar{p}_{s}\Vert _{L_{s}^{\infty }L_{x}^{2}}=\Vert \bar{p}%
_{s}\Vert _{L_{s}^{\infty }L_{x}^{2}}\int_{0}^{T}\left\vert {\frac{3-\sqrt{A}%
}{2\sqrt{A}}}A^{\prime }\right\vert (t)\,dt.  \label{estM}
\end{equation}%
To bound $N\bar{p}$, we use that $\displaystyle\bar{p}(s)=\int_{0}^{s}\bar{p}%
_{s}\,ds$, and so 
\begin{equation*}
|\bar{p}(s)|\leq \sqrt{s}\left( \int_{0}^{s}|\bar{p}_{s}|^{2}\,ds\right)
^{1/2}.
\end{equation*}%
Thus from the above and the Fubini theorem it follows that 
\begin{align*}
\Vert \bar{p}_{s}(s,\cdot )\Vert _{L_{x}^{2}}^{2}& \leq s\int_{{\mathbb{R}}%
^{3}}\int_{0}^{s}|\bar{p}_{s}|^{2}\,dsdx=s\int_{0}^{s}\int_{{\mathbb{R}}%
^{3}}|\bar{p}_{s}|^{2}\,dsdx \\
& \leq s\int_{0}^{s}\Vert \bar{p}_{s}\Vert _{L_{s}^{\infty
}L_{x}^{2}}^{2}\,ds=s^{2}\Vert \bar{p}_{s}\Vert _{L_{s}^{\infty
}L_{x}^{2}}^{2}.
\end{align*}%
Therefore 
\begin{equation}
\begin{split}
\Vert N\bar{p}\Vert _{L_{s}^{1}L_{x}^{2}}& \leq \Vert \bar{p}_{s}\Vert
_{L_{s}^{\infty }L_{x}^{2}}\int_{0}^{\beta ^{-1}(T/\sqrt{\epsilon }%
)}s|N(s)|\,ds \\
& \leq {\frac{\sqrt{\epsilon }}{2}}\beta ^{-1}\left( {\frac{T}{\sqrt{%
\epsilon }}}\right) \Vert \bar{p}_{s}\Vert _{L_{s}^{\infty
}L_{x}^{2}}\int_{0}^{T}\left\vert {\frac{\left[ 2A^{\prime \prime }\sqrt{A}%
+(1-\sqrt{A})(A^{\prime })^{2}\right] }{4A}}\right\vert (t)\,dt \\
& \leq {\frac{T}{2\sqrt{c}}}\Vert \bar{p}_{s}\Vert _{L_{s}^{\infty
}L_{x}^{2}}\int_{0}^{T}\left\vert {\frac{\left[ 2A^{\prime \prime }\sqrt{A}%
+(1-\sqrt{A})(A^{\prime })^{2}\right] }{4A}}\right\vert (t)\,dt,
\end{split}
\label{estN}
\end{equation}%
where the last inequality is due to the following estimate 
\begin{equation*}
(\beta ^{-1})^{\prime }={\frac{1}{\sqrt{a}}}\leq {\frac{1}{\sqrt{c}}}.
\end{equation*}%
Recall from (\ref{assumponA}) that $A^{\prime },A^{\prime \prime }\sim
o(A(t))$. Therefore for sufficiently small $\varepsilon $, from (\ref{estM})
and (\ref{estN}) we obtain that 
\begin{equation*}
\Vert M\bar{p}\Vert _{L_{s}^{1}L_{x}^{2}}+\Vert N\bar{p}\Vert
_{L_{s}^{1}L_{x}^{2}}\leq {\frac{1}{2C_{2}}}\Vert \bar{p}_{s}\Vert
_{L_{s}^{\infty }L_{x}^{2}}.
\end{equation*}%
Hence from (\ref{Sogge2}) it further yields that 
\begin{equation*}
\Vert \bar{p}_{s}\Vert _{L_{s}^{\infty }L_{x}^{2}}\leq 2C_{2}\Vert \Delta
(\nabla \cdot \bar{u})\Vert _{L_{s}^{1}L_{x}^{2}}.
\end{equation*}%
Thus from (\ref{Sogge1}) we finally have 
\begin{equation*}
\Vert \bar{p}\Vert _{L_{s,x}^{4}}+\Vert \bar{p}_{s}\Vert
_{L_{s}^{4}W_{x}^{-1,4}}\leq 2C_{1}\Vert \Delta (\nabla \cdot \bar{u})\Vert
_{L_{s}^{1}L_{x}^{2}}\leq {\frac{2C_{1}\sqrt{T}}{(c\epsilon )^{1/4}}}\Vert
\Delta (\nabla \cdot \bar{u})\Vert _{L_{s}^{2}L_{x}^{2}}.
\end{equation*}%
A similar argument applied to $\Delta ^{-1}\bar{p}$ implies that 
\begin{equation}
\Vert \bar{p}\Vert _{L_{s}^{4}W_{x}^{-2,4}}+\Vert \bar{p}_{s}\Vert
_{L_{s}^{4}W_{x}^{-3,4}}\leq {\frac{2C_{1}\sqrt{T}}{(c\epsilon )^{1/4}}}%
\Vert \nabla \cdot \bar{u}\Vert _{L_{s}^{2}L_{x}^{2}}.  \label{estbarp}
\end{equation}%
Note that 
\begin{equation*}
\Vert \bar{p}\Vert _{L_{s}^{r}}=\Vert \tilde{A}^{-{1/2r}}\tilde{p}_{2}\Vert
_{L_{\tau }^{r}},\quad \bar{p}_{s}=\sqrt{\tilde{A}}\partial _{\tau }\tilde{p}%
_{2}.
\end{equation*}%
Hence from (\ref{estbarp}) we have 
\begin{equation}
\Vert \tilde{p}_{2}\Vert _{L_{\tau }^{4}W_{x}^{-2,4}}+\Vert \partial _{\tau
}p_{2}\Vert _{L_{\tau }^{4}W_{x}^{-3,4}}\lesssim {\frac{\sqrt{T}}{\epsilon
^{1/4}}}\Vert \nabla \cdot \tilde{u}\Vert _{L_{\tau }^{2}L_{x}^{2}}.
\label{tildep_2}
\end{equation}%
Putting together (\ref{tildep_1}) and (\ref{tildep_2}) we have that 
\begin{equation*}
\begin{split}
\Vert \tilde{p}\Vert _{L_{\tau }^{4}W_{x}^{-2,4}}+\Vert \partial _{\tau }%
\tilde{p}\Vert _{L_{\tau }^{4}W_{x}^{-{3},4}}\lesssim & \ \Vert \tilde{p}%
(x,0)\Vert _{\dot{H}_{x}^{-{\frac{1}{2}}}}+\Vert \partial _{\tau }\tilde{p}%
(x,0)\Vert _{\dot{H}_{x}^{-{\frac{3}{2}}}} \\
& +{\frac{\sqrt{T}}{{\epsilon }^{1/4}}}\Vert \nabla \cdot \tilde{u}\Vert
_{L_{\tau }^{2}L_{x}^{2}}+\Vert \tilde{F}\Vert _{L_{\tau }^{1}L_{x}^{{\frac{3%
}{2}}}}.
\end{split}%
\end{equation*}%
Notice that first we have from the rescaling (\ref{rescale}) that 
\begin{equation*}
\tilde{f}_{\tau }=\sqrt{\epsilon }f_{t}^{\varepsilon },\quad \Vert \tilde{f}%
\Vert _{L_{\tau }^{r}L_{x}^{q}}=\epsilon ^{-1/2r}\Vert f^{\varepsilon }\Vert
_{L_{t}^{r}L_{x}^{q}}.
\end{equation*}%
Also, from the second equation in (\ref{ACm}) and (\ref{rewriteepsilon}) we
find that 
\begin{equation}
p_{t}^{\varepsilon }=-\left( {\frac{\nabla \cdot u^{\varepsilon }}{\epsilon
A(t)}}+{\frac{A^{\prime }(t)}{2A(t)}}p^{\varepsilon }\right) .
\label{p_texp}
\end{equation}%
Therefore we can estimate 
\begin{equation*}
\begin{split}
\Vert \partial _{\tau }\tilde{p}(x,0)\Vert _{\dot{H}_{x}^{-{\frac{3}{2}}}}&
\leq \Vert \partial _{\tau }\tilde{p}(x,0)\Vert _{\dot{H}_{x}^{-{1}}}=\sqrt{%
\epsilon }\Vert p_{t}^{\varepsilon }(x,0)\Vert _{\dot{H}_{x}^{-{1}}} \\
& \lesssim \epsilon ^{-1/2}\Vert \nabla \cdot u_{0}^{\varepsilon }\Vert
_{H_{x}^{-1}}+\Vert p_{0}^{\varepsilon }\Vert _{L_{x}^{2}}.
\end{split}%
\end{equation*}%
Putting all the above together we derive (\ref{strichartzforp}).
\end{proof}

Given the a priori energy estimates Theorem \ref{thm_energy_m} and the pressure estimates Theorem \ref{thm_est pressure}, we can now use the Galerkin approximation method to obtain the global existence of weak solutions to system \eqref{ACm}. Since the proof is quite standard we will omit it here.
\begin{theorem}\label{thm_weak}
Let $\varepsilon(t) > 0$ and $(u_0^{\varepsilon }, p_0^{\varepsilon })$ satisfy condition \eqref{initialcond}. Then for any $T>0$, system \eqref{ACm} admits a weak solution $(u^{\varepsilon}, p^{\varepsilon})$ with the following properties
\begin{enumerate}
\item[(1)] $u^{\varepsilon} \in L^{\infty }([0,T];L^{2}({{\mathbb{R}}}^{3}))\cap L^{2}([0,T];\dot{H}^{1}({{\mathbb{R}}}^{3}))$;
\item[(2)] $\sqrt{\varepsilon} p^{\varepsilon} \in L^\infty([0,T]; L^2(\mathbb{R}^3))$.
\end{enumerate}
\end{theorem}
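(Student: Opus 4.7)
The plan is to employ the standard Galerkin approximation scheme. First I would fix a countable orthonormal basis $\{\phi_j\}_{j \geq 1}$ of $L^2(\mathbb{R}^3)$ consisting of smooth, rapidly decaying functions (e.g.\ Hermite functions, whose span is dense in $H^1(\mathbb{R}^3)$), set $V_m = \mathrm{span}\{\phi_1, \ldots, \phi_m\}$, and seek approximants $u_m^{\varepsilon}(t,\cdot), p_m^{\varepsilon}(t,\cdot) \in V_m$ satisfying system \eqref{ACm} after projection onto $V_m$. Because the nonlinearity is polynomial in the coefficients, the resulting finite-dimensional ODE system is locally Lipschitz, and Picard-Lindel\"of gives local-in-time existence.

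Next I would reproduce the proof of Theorem \ref{thm_energy_m} at the Galerkin level: testing the projected momentum equation with $u_m^{\varepsilon}$ and the projected continuity equation with $p_m^{\varepsilon}$, then adding, yields
\begin{equation*}
\frac{d}{dt}\left[\frac{1}{2}\|u_m^{\varepsilon}\|_{L^2}^2 + \frac{\varepsilon(t)}{2}\|p_m^{\varepsilon}\|_{L^2}^2\right] + \|\nabla u_m^{\varepsilon}\|_{L^2}^2 = 0,
\end{equation*}
where the particular splitting $\partial_t(\varepsilon p^{\varepsilon}) - \frac{1}{2}\varepsilon_t p^{\varepsilon}$ in \eqref{ACm} is precisely what makes this identity clean. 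This bound extends the Galerkin solution to all of $[0,T]$ and recovers the uniform-in-$m$ estimates of Corollary \ref{cor_unif est}: $u_m^{\varepsilon}$ is bounded in $L^{\infty}_t L^2_x \cap L^2_t \dot H^1_x$ (the $L^2_t L^6_x$ bound following by Sobolev embedding), $\sqrt{\varepsilon}\,p_m^{\varepsilon}$ is bounded in $L^{\infty}_t L^2_x$, and the nonlinear terms lie in $L^2_t L^1_x \cap L^1_t L^{3/2}_x$ by H\"older.

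I would then extract weak-$*$ limits $u^{\varepsilon}$ and $p^{\varepsilon}$ along a subsequence. The linear and time-dependent terms pass to the limit directly (the regularity of $\varepsilon(t)$ in \eqref{assumponepsilon} renders the coefficients harmless). The delicate issue is the nonlinear terms $(u_m^{\varepsilon}\cdot\nabla) u_m^{\varepsilon} + \frac{1}{2}(\nabla\cdot u_m^{\varepsilon})u_m^{\varepsilon}$, which require strong convergence of $u_m^{\varepsilon}$. For this I would extract a $\partial_t u_m^{\varepsilon}$ bound in $L^{4/3}_t H^{-N}_x$ (for $N$ large enough) from the projected momentum equation, using the already-obtained bounds on $\nabla u_m^{\varepsilon}$, on the quadratic terms, and on $\nabla p_m^{\varepsilon}$ (the latter via Theorem \ref{thm_est pressure}). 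Combined with the spatial bound, a localized Aubin-Lions argument yields strong convergence of a subsequence in $L^2_t L^2_{\mathrm{loc}}$, which suffices to pass to the limit in the nonlinearities against any test function of compact support.

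The main obstacle I foresee is this strong-compactness step in the whole-space setting: to compensate for the lack of spatial compactness one must truncate with smooth cutoffs $\chi_R$ and apply Aubin-Lions on each $\{|x|\leq R\}$, then extract a diagonal subsequence as $R\to\infty$. Once this is carried out, passing to the limit in the pressure equation is routine since it is linear in $(p^{\varepsilon}, \nabla\cdot u^{\varepsilon})$ with smooth coefficients, and the lower semicontinuity of norms under weak-$*$ convergence transfers the uniform bounds to $(u^{\varepsilon},p^{\varepsilon})$, giving regularity (1) and (2) as claimed.
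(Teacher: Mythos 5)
Your proposal follows exactly the route the paper indicates (and omits as ``quite standard''): a Galerkin scheme closed by the energy identity of Theorem \ref{thm_energy_m}, with weak-$*$ limits and a localized Aubin--Lions argument on balls to pass to the limit in the nonlinearity in the whole-space setting. One small correction: since $\varepsilon$ is fixed and positive in this theorem, you should not invoke Theorem \ref{thm_est pressure} to control $\nabla p_m^{\varepsilon}$ (its hypotheses on $\varepsilon_t,\varepsilon_{tt}$ are not assumed here) --- the energy bound already gives $p_m^{\varepsilon}$ bounded in $L^{\infty}_t L^2_x$ with an $\varepsilon$-dependent constant, hence $\nabla p_m^{\varepsilon}$ bounded in $L^{\infty}_t H^{-1}_x$, which suffices for the $\partial_t u_m^{\varepsilon}$ estimate.
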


\section{Convergence to the NSE}

The goal of this section is to establish the convergence of the AC system (%
\ref{ACm}) to the NS system, cf. Theorem \ref{thm_main conv}. The key step
is to show the strong convergence of the gradient part and the
divergence-free part of the velocity field. For this, let us denote ${%
\mathbb{P}}$ the Leray projection defined by 
\begin{equation}
{\mathbb{P}}=I-{\mathbb{Q}},\qquad {\text{where }}\qquad {\mathbb{Q}}=\nabla
(\Delta ^{-1}\nabla \cdot ).  \label{LerayP}
\end{equation}%
Note that ${\mathbb{P}}$ and ${\mathbb{Q}}$ are both bounded linear
operators on $W^{k,q}({\mathbb{R}}^{3})$ for any $k$ and $q\in (
1,\infty )$. See, e.g., \cite{stein2016harmonic}.

From Corollary \ref{cor_unif est} and Theorem \ref{thm_est pressure} we
easily obtain the following result.

\begin{proposition}
\label{prop_conv} Let the assumptions in Theorem \ref{thm_est pressure}
hold. Then as $\varepsilon \to 0$ it follows that 
\begin{align}
&\varepsilon p^{\varepsilon }\to 0\qquad {\text { strongly in }}L^{\infty
}([0,T];L^2({\mathbb{R}}^3))\cap L^4([0,T];W^{-{2},4}({\mathbb{R}}^3)),
\label{strongconvp} \\
&\nabla \cdot u^{\varepsilon }\to 0\quad {\text { strongly in }}W^{-1,\infty
}([0,T];L^2({\mathbb{R}}^3))\cap L^4([0,T];W^{-{3},4}({\mathbb{R}}^3)).
\label{strongconvdivu}
\end{align}
\end{proposition}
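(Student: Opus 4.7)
The plan is to combine the uniform bounds from Corollary \ref{cor_unif est} with the Strichartz bound \eqref{strichartzforp}, and then read off the convergence of $\nabla\cdot u^{\varepsilon}$ from the continuity equation in \eqref{ACm}. Throughout, $\varepsilon(t)\sim\epsilon$ by \eqref{assumponepsilon}, so the powers of $\epsilon$ produced by Theorem \ref{thm_est pressure} may be matched directly against the $\varepsilon$-prefactors that multiply the pressure.

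First I would prove \eqref{strongconvp}. The factorization $\varepsilon p^{\varepsilon}=\sqrt{\varepsilon(t)}\cdot\sqrt{\varepsilon(t)}\,p^{\varepsilon}$, combined with \eqref{pressure} and the upper bound $\varepsilon(t)\le C\epsilon$ from \eqref{assumponepsilon}, immediately yields $\|\varepsilon p^{\varepsilon}\|_{L^\infty_tL^2_x}\lesssim \sqrt{\epsilon}\to 0$. For the $L^4_tW^{-2,4}_x$ piece, I would verify that every term on the right-hand side of \eqref{strichartzforp} is uniformly bounded in $\epsilon$: the two initial-data contributions by \eqref{initialcond}, the divergence term by \eqref{gradu}, and the nonlinear term by \eqref{convective}--\eqref{correction}. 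Consequently $\|p^{\varepsilon}\|_{L^4_tW^{-2,4}_x}\lesssim \epsilon^{-3/8}$, and multiplying by $\varepsilon\lesssim\epsilon$ gives $\|\varepsilon p^{\varepsilon}\|_{L^4_tW^{-2,4}_x}\lesssim \epsilon^{5/8}\to 0$.

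For \eqref{strongconvdivu} I would start from the continuity equation of \eqref{ACm}, rearranged as
\[
\nabla\cdot u^{\varepsilon}=-\varepsilon\,p^{\varepsilon}_t-\tfrac{1}{2}\varepsilon_t\,p^{\varepsilon}.
\]
The second half of \eqref{strichartzforp} gives $\|\varepsilon p^{\varepsilon}_t\|_{L^4_tW^{-3,4}_x}\lesssim \epsilon\cdot\epsilon^{-7/8}=\epsilon^{1/8}$, while the continuous embedding $W^{-2,4}\hookrightarrow W^{-3,4}$ combined with $|\varepsilon_t(t)|=o(\varepsilon(t))=o(\epsilon)$ from \eqref{condonepsilon} and the Strichartz bound on $\|p^{\varepsilon}\|_{L^4_tW^{-2,4}_x}$ yields $\|\varepsilon_t p^{\varepsilon}\|_{L^4_tW^{-3,4}_x}\lesssim o(\epsilon)\cdot\epsilon^{-3/8}=o(\epsilon^{5/8})$. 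For the $W^{-1,\infty}_tL^2_x$ piece I would integrate in time: the primitive of $-(\varepsilon p^{\varepsilon})_t$ equals $-\varepsilon(t)p^{\varepsilon}(t)+\varepsilon(0)p^{\varepsilon}(0)$, which tends to zero in $L^\infty_tL^2_x$ by the pressure step and by \eqref{initialcond}; the primitive of $\tfrac{1}{2}\varepsilon_t p^{\varepsilon}$ is controlled in $L^\infty_tL^2_x$ by $\tfrac{T}{2}\sup_t\bigl(|\varepsilon_t(t)|/\sqrt{\varepsilon(t)}\bigr)\,\|\sqrt{\varepsilon}p^{\varepsilon}\|_{L^\infty_tL^2_x}=o(\sqrt{\epsilon})\to 0$, using \eqref{condonepsilon} and \eqref{pressure}.

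The only delicate point is bookkeeping: the Strichartz bound produces the pressure at the cost of a negative power $\epsilon^{-3/8}$ (respectively $\epsilon^{-7/8}$ for $p_t^{\varepsilon}$), so one must check that the explicit $\varepsilon$-prefactors defeat these losses and that the hypothesis $\varepsilon_t=o(\varepsilon)$ of \eqref{condonepsilon} is actually used when estimating the remainder term produced by the non-autonomous continuity equation. No compactness, subsequence extraction, or oscillation argument is required; the proposition is a genuine strong-convergence statement obtained by direct quantitative estimation.
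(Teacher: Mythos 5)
Your proposal is correct and follows essentially the same route as the paper, which simply asserts that \eqref{strongconvp} follows from \eqref{pressure} and \eqref{strichartzforp} and that \eqref{strongconvdivu} follows from \eqref{strichartzforp} and the continuity equation of \eqref{ACm}; you have merely filled in the power-counting and the time-integration step for the $W^{-1,\infty}_tL^2_x$ norm that the authors leave implicit. No discrepancy to report.
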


\begin{proof}
It is easily seen that (\ref{strongconvp}) follows from (\ref{pressure}), (%
\ref{strichartzforp}). Further, (\ref{strongconvdivu}) follows from (\ref%
{strichartzforp}) and the second equation of (\ref{ACm}).
\end{proof}

\subsection{Strong convergence of ${\mathbb{Q}}u^{\protect\varepsilon }$}

We will first prove that ${\mathbb{Q}}u^{\varepsilon }$ goes to zero in some
strong sense as $\varepsilon \to 0$.

\begin{lemma}
\label{lem_conv Q} Let $(u^{\varepsilon },p^{\varepsilon })$ be the solution
of the Cauchy problem to system (\ref{ACm}) with initial data $%
(u_{0}^{\varepsilon },p_{0}^{\varepsilon })$ satisfying (\ref{initialcond}).
Assume also that $\varepsilon (t)$ satisfies (\ref{assumponepsilon}). Then
for any $4\leq p<6$, 
\begin{equation}
{\mathbb{Q}}u^{\varepsilon }\rightarrow 0\quad {\text{ in }}\quad
L^{2}([0,T];L^{p}),\quad {\text{as }}\ \varepsilon \rightarrow 0.
\label{convQ}
\end{equation}
\end{lemma}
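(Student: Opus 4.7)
The plan is to combine the strong convergence of $\nabla\cdot u^{\varepsilon}$ coming from Proposition \ref{prop_conv} with the uniform $L^{2}(\dot H^{1})$ energy bound from Corollary \ref{cor_unif est}, via a Gagliardo--Nirenberg interpolation in space followed by H\"older in time.

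First, since $\mathbb{Q}=\nabla(-\Delta)^{-1}\nabla\cdot$ is a smoothing operator of order one (mapping $W^{s,q}$ to $W^{s+1,q}$ for $1<q<\infty$), the strong convergence $\nabla\cdot u^{\varepsilon}\to 0$ in $L^{4}([0,T];W^{-3,4}({\mathbb{R}}^{3}))$ supplied by Proposition \ref{prop_conv} immediately lifts to
\[
\mathbb{Q}u^{\varepsilon}\to 0\quad\text{in}\quad L^{4}([0,T];W^{-2,4}({\mathbb{R}}^{3}))\;\subset\; L^{2}([0,T];W^{-2,4}({\mathbb{R}}^{3})),
\]
the second inclusion by H\"older in $t$ on the bounded interval $[0,T]$. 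Simultaneously, since $\mathbb{Q}$ is bounded on $\dot H^{1}$, the estimate \eqref{gradu} gives that $\mathbb{Q}u^{\varepsilon}$ is uniformly bounded in $L^{2}([0,T];\dot H^{1}({\mathbb{R}}^{3}))$.

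Next, for each fixed $4\le p<6$ I would invoke the scaling-invariant Gagliardo--Nirenberg inequality
\[
\|f\|_{L^{p}({\mathbb{R}}^{3})}\lesssim\|f\|_{\dot W^{-2,4}({\mathbb{R}}^{3})}^{\alpha}\,\|f\|_{\dot H^{1}({\mathbb{R}}^{3})}^{1-\alpha},
\]
where the exponent $\alpha=\alpha(p)\in(0,1/9]$ is forced by dilation scaling via $3/p=\tfrac{11}{4}\alpha+\tfrac{1}{2}(1-\alpha)$; in particular $\alpha=1/9$ when $p=4$ and $\alpha\to 0$ as $p\to 6$. This inequality is obtained by complex interpolation of Bessel potential spaces, $[\dot W^{-2,4},\dot H^{1}]_{1-\alpha}=\dot W^{s_{\alpha},r_{\alpha}}$, composed with a Sobolev embedding $\dot W^{s_{\alpha},r_{\alpha}}\hookrightarrow L^{p}$ that remains valid throughout the stated range.

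Finally, squaring the pointwise-in-$t$ inequality, integrating on $[0,T]$, and applying H\"older in time with conjugate exponents $1/\alpha$ and $1/(1-\alpha)$ yields
\[
\int_{0}^{T}\|\mathbb{Q}u^{\varepsilon}\|_{L^{p}}^{2}\,dt\lesssim\left(\int_{0}^{T}\|\mathbb{Q}u^{\varepsilon}\|_{W^{-2,4}}^{2}\,dt\right)^{\alpha}\left(\int_{0}^{T}\|\mathbb{Q}u^{\varepsilon}\|_{\dot H^{1}}^{2}\,dt\right)^{1-\alpha}.
\]
The right-hand factor is uniformly bounded by the energy estimates, while the left-hand factor tends to zero by the first step, so the desired conclusion $\mathbb{Q}u^{\varepsilon}\to 0$ in $L^{2}([0,T];L^{p})$ follows. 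The main obstacle is verifying the Gagliardo--Nirenberg inequality when one endpoint is a negative-order Sobolev space; a Littlewood--Paley decomposition (or, equivalently, complex interpolation of Bessel potential spaces followed by Sobolev embedding) handles it, and the admissible scaling is precisely what forces the restriction $p<6$ through the degeneracy $\alpha\to 0$ at the upper endpoint.
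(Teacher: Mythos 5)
Your argument is correct in substance and uses the same two ingredients as the paper --- the vanishing of $\mathbb{Q}u^{\varepsilon}$ in a negative-order space (coming from the Strichartz bounds) traded against the uniform $\dot H^{1}$ energy bound --- but packages them differently. The paper's proof splits $\mathbb{Q}u^{\varepsilon}=(\mathbb{Q}u^{\varepsilon}-(\mathbb{Q}u^{\varepsilon})_{\alpha})+(\mathbb{Q}u^{\varepsilon})_{\alpha}$ with a mollifier at scale $\alpha$, estimates the rough part by $\alpha^{1-3(1/2-1/p)}\Vert\nabla u^{\varepsilon}\Vert_{L^{2}_{t,x}}$ and the mollified part by Bernstein-type bounds \eqref{molest} applied to $\mathbb{Q}u^{\varepsilon}=-\epsilon\nabla\Delta^{-1}(Ap_{t}^{\varepsilon}+\tfrac12A'p^{\varepsilon})$ from \eqref{p_texp}, and then optimizes $\alpha=\varepsilon^{1/14}$ to get the explicit rate $\varepsilon^{(6-p)/(28p)}$. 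Your version replaces that physical-space low/high frequency splitting by the abstract interpolation inequality $\Vert f\Vert_{L^{p}}\lesssim\Vert f\Vert_{\dot W^{-2,4}}^{\alpha}\Vert f\Vert_{\dot H^{1}}^{1-\alpha}$ followed by H\"older in time; this is more modular (and your exponent bookkeeping, $\alpha=\tfrac49(\tfrac3p-\tfrac12)\in(0,1/9]$, is consistent with scaling), at the cost of having to justify a Gagliardo--Nirenberg inequality with a negative-order endpoint --- which you correctly flag, and which does hold for $4\le p<6$ by the Littlewood--Paley argument you sketch, degenerating exactly at $p=6$ just as the paper's rate does. Two small imprecisions to fix. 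First, $\mathbb{Q}=\nabla\Delta^{-1}\nabla\cdot$ is a bounded operator of order \emph{zero} (a composition of Riesz transforms); what gains one derivative is $\nabla\Delta^{-1}$ acting on $\nabla\cdot u^{\varepsilon}$, and this gain is only valid in \emph{homogeneous} spaces, since $|\nabla|^{-1}$ does not map $W^{-3,4}$ into $W^{-2,4}$ at low frequencies. You should therefore run the first step through the identity \eqref{p_texp} in homogeneous norms (as the paper implicitly does) rather than asserting $W^{s,q}\to W^{s+1,q}$ boundedness of $\mathbb{Q}$. Second, your route loses a little in the rate ($\varepsilon^{(6-p)/(36p)}$ rather than $\varepsilon^{(6-p)/(28p)}$, roughly) because you do not optimize the crossover frequency against $\epsilon$; this is harmless for the qualitative statement of the lemma.
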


\begin{proof}
We follow the idea from \cite[Proposition 5.3]{donatelli2006whole}. Consider
the standard mollifier 
\begin{equation*}
\eta \in C_{0}^{\infty }({\mathbb{R}}^{3}),\ \eta \geq 0,\ \int_{{\mathbb{R}}%
^{3}}\eta \,dx=1;\qquad \eta _{\alpha }(x):=\alpha ^{-3}\eta (x/\alpha ),\
0<\alpha <1.
\end{equation*}%
Set $f_{\alpha }:=f\ast \eta _{\alpha }$. Then for any $f\in \dot{H}^{1}({%
\mathbb{R}}^{3})$ it holds 
\begin{equation}
\Vert f-f_{\alpha }\Vert _{L^{p}}\leq C\alpha ^{1-3\left( {\frac{1}{2}}-{%
\frac{1}{p}}\right) }\Vert \nabla f\Vert _{L^{2}},\quad \Vert f_{\alpha
}\Vert _{L^{r}}\leq C\alpha ^{-s-3\left( {\frac{1}{q}}-{\frac{1}{r}}\right)
}\Vert f\Vert _{W^{-s,q}}  \label{molest}
\end{equation}%
where $p\in \lbrack 2,6]$, $1\leq q\leq r\leq \infty $, $s\geq 0$. 

With the above, we decompose ${\mathbb{Q}}u^{\varepsilon }$ as 
\begin{equation*}
\Vert {\mathbb{Q}}u^{\varepsilon }\Vert _{L_{t}^{2}L_{x}^{p}}\leq \Vert {%
\mathbb{Q}}u^{\varepsilon }-({\mathbb{Q}}u^{\varepsilon })_{\alpha }\Vert
_{L_{t}^{2}L_{x}^{p}}+\Vert ({\mathbb{Q}}u^{\varepsilon })_{\alpha }\Vert
_{L_{t}^{2}L_{x}^{p}}=:J_{1}+J_{2}.
\end{equation*}%
Applying (\ref{molest}) to $J_{1}$ we have 
\begin{equation*}
J_{1}\leq C\alpha ^{1-3\left( {\frac{1}{2}}-{\frac{1}{p}}\right) }\left(
\int_{0}^{T}\Vert \nabla {\mathbb{Q}}u^{\varepsilon }\Vert
_{L_{x}^{2}}^{2}\,dt\right) ^{1/2}\leq C\alpha ^{1-3\left( {\frac{1}{2}}-{%
\frac{1}{p}}\right) }\Vert \nabla u^{\varepsilon }\Vert
_{L_{t}^{2}L_{x}^{2}}.
\end{equation*}%
As for $J_{2}$, from (\ref{p_texp}) we see that 
\begin{equation*}
{\mathbb{Q}}u^{\varepsilon }=\nabla \Delta ^{-1}(\nabla \cdot u^{\varepsilon
})=-\epsilon \nabla \Delta ^{-1}\left( Ap_{t}^{\varepsilon }+{\frac{1}{2}}%
A^{\prime }p^{\varepsilon }\right) .
\end{equation*}%
Thus from (\ref{molest}) we have 
\begin{align*}
J_{2}& =\epsilon \left\Vert \nabla \Delta ^{-1}\left( Ap_{t}^{\varepsilon }+{%
\frac{1}{2}}A^{\prime }p^{\varepsilon }\right) \ast \psi _{\alpha
}\right\Vert _{L_{t}^{2}L_{x}^{p}} \\
& \lesssim \epsilon \alpha ^{-{\frac{3}{2}}-3\left( {\frac{1}{4}}-{\frac{1}{p%
}}\right) }\Vert Ap_{t}^{\varepsilon }\Vert _{L_{t}^{2}W_{x}^{-{3}%
,4}}+\epsilon \alpha ^{-{\frac{1}{2}}-3\left( {\frac{1}{4}}-{\frac{1}{p}}%
\right) }\Vert A^{\prime }p^{\varepsilon }\Vert _{L_{t}^{2}W_{x}^{-{2},4}} \\
& \lesssim T^{{\frac{1}{4}}}\epsilon ^{{\frac{1}{8}}}\alpha ^{-{\frac{3}{2}}%
-3\left( {\frac{1}{4}}-{\frac{1}{p}}\right) }\Vert \epsilon ^{{\frac{7}{8}}%
}p_{t}^{\varepsilon }\Vert _{L_{t}^{4}W_{x}^{-{3},4}}+T^{{\frac{1}{4}}%
}\epsilon ^{{\frac{5}{8}}}\alpha ^{-{\frac{1}{2}}-3\left( {\frac{1}{4}}-{%
\frac{1}{p}}\right) }\Vert \epsilon ^{{\frac{3}{8}}}p^{\varepsilon }\Vert
_{L_{t}^{4}W_{x}^{-{2},4}}.
\end{align*}%
Now summing up the estimates for $J_{1}$ and $J_{2}$ and using Corollary \ref%
{cor_unif est} and Theorem \ref{thm_est pressure} we find that for any $%
4\leq p<6$, 
\begin{equation*}
\Vert {\mathbb{Q}}u^{\varepsilon }\Vert _{L_{t}^{2}L_{x}^{p}}\lesssim \alpha
^{1-3\left( {\frac{1}{2}}-{\frac{1}{p}}\right) }+\epsilon ^{{\frac{1}{8}}%
}\alpha ^{-{\frac{3}{2}}-3\left( {\frac{1}{4}}-{\frac{1}{p}}\right)
}+\epsilon ^{{\frac{5}{8}}}\alpha ^{-{\frac{1}{2}}-3\left( {\frac{1}{4}}-{%
\frac{1}{p}}\right) }.
\end{equation*}%
Therefore when choosing, e.g., 
\begin{equation*}
\alpha =\varepsilon ^{{\frac{1}{14}}},
\end{equation*}%
the above estimate becomes 
\begin{equation*}
\Vert {\mathbb{Q}}u^{\varepsilon }\Vert _{L_{t}^{2}L_{x}^{p}}\lesssim
\varepsilon ^{{\frac{6-p}{28p}}}+\varepsilon ^{{\frac{6+15p}{28p}}}\lesssim
\varepsilon ^{{\frac{6-p}{28p}}},\quad {\text{ for any }}\ 4\leq p<6,
\end{equation*}%
which implies (\ref{convQ}).
\end{proof}

\subsection{Strong convergence of ${\mathbb{P}}u^{\protect\varepsilon }$}

Let us first recall the celebrated Aubin-Lions lemma \cite{Au63,jllions1969}.

\begin{lemma}
\label{lem_AL} Let $X_0,X$ and $X_1$ be Banach spaces with $X_0\subset
X\subset X_1$. Suppose that $X_0$ is compactly embedded in $X$ and that $X$
is continuously embedded in $X_1$. Suppose also that $X_0$ and $X_1$ are
reflexive. For $1<p,q<\infty $, let 
\begin{equation*}
W:=\left \{u\in L^q([0,T];X_0):\ {\frac{du}{dt}}\in L^q([0,T];X_1)\right \}.
\end{equation*}
Then the embedding of $W$ into $L^p([0,T];X)$ is compact.
\end{lemma}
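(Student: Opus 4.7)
The plan is to follow the classical Aubin-Lions-Simon strategy, combining an interpolation inequality coming from the compact embedding $X_{0} \hookrightarrow X$ with a uniform time-translation estimate derived from the bound on $du/dt$ in $L^{q}([0,T];X_{1})$. The goal is to verify the hypotheses of the Kolmogorov-Fr\'echet-Riesz compactness characterization in $L^{p}([0,T];X)$.

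First I would establish Ehrling's lemma: for every $\eta > 0$ there exists $C_{\eta} > 0$ such that
\[
\|v\|_{X} \leq \eta \|v\|_{X_{0}} + C_{\eta}\|v\|_{X_{1}} \quad \text{for all } v \in X_{0}.
\]
This follows by a standard contradiction argument using the compact embedding of $X_{0}$ into $X$ together with the continuous embedding $X \hookrightarrow X_{1}$. Next, given a bounded sequence $\{u_{n}\} \subset W$, I would establish time-equicontinuity: the bound
\[
\|u_{n}(t+h) - u_{n}(t)\|_{X_{1}} \leq \int_{t}^{t+h} \left\|\frac{du_{n}}{ds}\right\|_{X_{1}} ds,
\]
combined with H\"older's inequality, yields $\|\tau_{h}u_{n} - u_{n}\|_{L^{p}([0,T-h];X_{1})} = O(h^{\alpha})$ uniformly in $n$ for some $\alpha > 0$, where $\tau_{h}u(t) := u(t+h)$. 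Combining this with Ehrling gives
\[
\|\tau_{h}u_{n} - u_{n}\|_{L^{p}([0,T-h];X)} \leq 2\eta \|u_{n}\|_{L^{p}([0,T];X_{0})} + C_{\eta}\|\tau_{h}u_{n} - u_{n}\|_{L^{p}([0,T-h];X_{1})},
\]
and sending first $h \to 0$ and then $\eta \to 0$ produces the required uniform equicontinuity in $L^{p}([0,T];X)$. Boundedness of $\{u_{n}\}$ in $L^{p}([0,T];X)$ is inherited from the $L^{q}([0,T];X_{0})$ bound via the continuous embedding $X_{0} \hookrightarrow X$ and H\"older in time, and the pointwise precompactness in $X$ needed for the Kolmogorov-Fr\'echet-Riesz criterion follows from the compact embedding $X_{0} \hookrightarrow X$ applied to time averages.

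The main obstacle will be the exponent bookkeeping needed to make the argument uniform over the full parameter range $1 < p,q < \infty$, particularly when $p$ exceeds $q$: in that case one cannot pass from $L^{q}$ to $L^{p}$ bounds by a direct H\"older step, and must interpolate the $L^{q}(X_{0})$ bound against an $L^{\infty}(X_{1})$ bound coming from the embedding $W \hookrightarrow C([0,T];X_{1})$ (which itself follows from the $X_{1}$-valued fundamental theorem of calculus). Reflexivity of $X_{0}$ and $X_{1}$ then enters in the subsequence extraction: one picks weak limits of $u_{n}$ in $L^{q}([0,T];X_{0})$ and of $du_{n}/dt$ in $L^{q}([0,T];X_{1})$, and the equicontinuity together with pointwise $X$-precompactness upgrades this to strong convergence in $L^{p}([0,T];X)$.
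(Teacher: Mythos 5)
The paper itself offers no proof of this lemma: it is quoted as the classical Aubin--Lions compactness result and attributed to Aubin and Lions, so there is nothing internal to compare your argument against. Your scheme --- Ehrling's inequality $\|v\|_{X}\le \eta\|v\|_{X_{0}}+C_{\eta}\|v\|_{X_{1}}$ obtained by contradiction from the compact embedding, the translation estimate $\|u(t+h)-u(t)\|_{X_{1}}\le h^{1-1/q}\|u'\|_{L^{q}(X_{1})}$, and the Kolmogorov--Riesz--Fr\'echet criterion with precompactness of time averages supplied by the $L^{q}(X_{0})$ bound --- is exactly Simon's modern proof of the lemma, and it is sound and essentially complete in outline whenever $p\le q$, i.e.\ whenever $\|u_{n}\|_{L^{p}([0,T];X_{0})}$ is controlled from $\|u_{n}\|_{L^{q}([0,T];X_{0})}$ by H\"older in time. (Reflexivity is not actually needed on this route; it enters only in Lions's original argument.)

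The genuine gap is the case $p>q$, which you correctly flag but then propose to repair by ``interpolating the $L^{q}(X_{0})$ bound against an $L^{\infty}(X_{1})$ bound.'' In the abstract three-space setting there is no multiplicative inequality $\|v\|_{X}\lesssim \|v\|_{X_{0}}^{\theta}\|v\|_{X_{1}}^{1-\theta}$: Ehrling's lemma is additive, and it cannot convert $L^{q}$ integrability in $X_{0}$ plus boundedness in $X_{1}$ into $L^{p}(X)$ control for $p>q$. In fact the statement as literally written fails for $p$ large relative to $q$: on the torus take $X_{0}=H^{1}$, $X=L^{2}$, $X_{1}=H^{-1}$ and $u_{n}(t,x)=\phi_{n}(t)e_{n}(x)$, with $e_{n}$ the $n$-th normalized Fourier mode and $\phi_{n}$ a smooth bump of width $n^{-2}$ and height $n^{2/q-1}$; then $u_{n}$ and $u_{n}'$ are bounded in $L^{q}(H^{1})$ and $L^{q}(H^{-1})$ respectively and $u_{n}\rightharpoonup 0$, yet $\|u_{n}\|_{L^{p}(L^{2})}\sim n^{2/q-2/p-1}$ stays bounded away from zero once $1/q-1/p\ge 1/2$ (e.g.\ $q=4/3$, $p=4$), so no subsequence converges strongly in $L^{p}(L^{2})$. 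The defect is really in the statement rather than in your strategy: the standard form of the lemma has $u\in L^{p}([0,T];X_{0})$ in the definition of $W$ with compact embedding into $L^{p}([0,T];X)$, or equivalently one must restrict to $p\le q$ here. Either corrected version is what is actually used in Lemma \ref{lem_conv P}, where the additional integrability needed there is supplied by the separate $L^{\infty}L^{2}\cap L^{2}H^{1}$ bounds; with that correction your proof goes through.
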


Next we will apply the above lemma to establish the strong compactness of
the divergence-free part of the velocity field ${\mathbb{P}}u^{\varepsilon }$%
.

\begin{lemma}
\label{lem_conv P} Let $(u^{\varepsilon },p^{\varepsilon })$ be the solution
of the Cauchy problem to system (\ref{ACm}) with initial data $%
(u_{0}^{\varepsilon },p_{0}^{\varepsilon })$ satisfying (\ref{initialcond}).
Assume also that $\varepsilon (t)$ satisfies (\ref{assumponA}). Then ${%
\mathbb{P}}u^{\varepsilon }$ is pre-compact in $L^{2}([0,T];L_{{\text{loc}}%
}^{2}({\mathbb{R}}^{3}))$.
\end{lemma}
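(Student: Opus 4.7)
My plan is to invoke the Aubin-Lions lemma (Lemma~\ref{lem_AL}) on each ball $B_R \subset \mathbb{R}^3$ and then extract a diagonal subsequence to pass to precompactness in $L^2([0,T]; L^2_{\text{loc}})$. The critical first move is to apply the Leray projection $\mathbb{P}$ to the momentum equation of (\ref{ACm}): since $\mathbb{P}\nabla p^\varepsilon = 0$ and $[\mathbb{P}, \Delta] = 0$ on $\mathbb{R}^3$, the pressure drops out and we are left with
$$\partial_t \mathbb{P}u^\varepsilon = \Delta \mathbb{P}u^\varepsilon - \mathbb{P}\left[(u^\varepsilon \cdot \nabla)u^\varepsilon + \tfrac{1}{2}(\nabla \cdot u^\varepsilon)u^\varepsilon\right].$$

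For the higher-regularity slot of Aubin-Lions, Corollary~\ref{cor_unif est} gives a uniform bound on $u^\varepsilon$ in $L^2([0,T]; \dot{H}^1(\mathbb{R}^3))$, combined with the uniform $L^\infty L^2$ bound. Since $\mathbb{P}$ is a zeroth-order Fourier multiplier and hence bounded on every $W^{s,q}$ with $1<q<\infty$, this yields a uniform bound of $\mathbb{P}u^\varepsilon$ in $L^2([0,T]; H^1(B_R))$ for each $R$.

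Next I bound $\partial_t \mathbb{P}u^\varepsilon$ in a negative-order space. The viscous term $\Delta \mathbb{P}u^\varepsilon$ is uniformly in $L^2([0,T]; H^{-1}(\mathbb{R}^3))$ by the gradient bound. For the two nonlinear terms, estimates (\ref{convective})-(\ref{correction}) give uniform bounds in $L^2([0,T]; L^1(\mathbb{R}^3))$; restricting to $B_R$ and using the continuous embedding $L^1(B_R) \hookrightarrow H^{-s}(B_R)$ for any $s > 3/2$ (dual to the Sobolev embedding $H^s(B_R) \hookrightarrow L^\infty(B_R)$), along with boundedness of $\mathbb{P}$ on $H^{-s}$, I conclude that $\partial_t \mathbb{P}u^\varepsilon$ is uniformly bounded in $L^2([0,T]; H^{-2}(B_R))$.

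With these two uniform bounds, I apply Lemma~\ref{lem_AL} with the triple $X_0 = H^1(B_R)$, $X = L^2(B_R)$, $X_1 = H^{-2}(B_R)$ and $p = q = 2$: the compact embedding $X_0 \hookrightarrow X$ is Rellich-Kondrachov, $X \hookrightarrow X_1$ is continuous, and both $X_0$ and $X_1$ are reflexive Hilbert spaces. This yields relative compactness of $\mathbb{P}u^\varepsilon$ in $L^2([0,T]; L^2(B_R))$ for each fixed $R$, and a standard diagonal extraction over $R = 1, 2, 3, \dots$ delivers the claimed precompactness in $L^2([0,T]; L^2_{\text{loc}}(\mathbb{R}^3))$. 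The main delicacy I anticipate is the interplay between the global definition of $\mathbb{P}$ on $\mathbb{R}^3$ and the local function spaces $H^{\pm s}(B_R)$; I sidestep this by always keeping $\mathbb{P}u^\varepsilon$ as a distribution on the whole space and only restricting to $B_R$ after the Leray projection has been applied.
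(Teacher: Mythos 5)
Your proof is correct and follows essentially the same route as the paper: apply the Leray projection to eliminate the pressure, obtain a uniform bound on $\partial_t\mathbb{P}u^{\varepsilon}$ in a negative-order Sobolev space, and invoke Aubin--Lions (Lemma~\ref{lem_AL}) locally. The only difference is in a single estimate: the paper controls the convective terms via Temam's inequality $\Vert (u^{\varepsilon}\cdot\nabla)u^{\varepsilon}\Vert_{H^{-1}}\leq \Vert u^{\varepsilon}\Vert_{L^{2}}^{1/2}\Vert u^{\varepsilon}\Vert_{H^{1}}^{3/2}$ to place $\mathbb{P}u_{t}^{\varepsilon}$ in $L^{4/3}([0,T];H^{-1}(\mathbb{R}^{3}))$, whereas you use the $L^{2}([0,T];L^{1})$ bound of Corollary~\ref{cor_unif est} together with the embedding $L^{1}(\mathbb{R}^{3})\hookrightarrow H^{-2}(\mathbb{R}^{3})$ (valid globally, so that $\mathbb{P}$ can be applied as an $H^{-2}$-bounded Fourier multiplier before restricting to $B_{R}$, exactly as you note) to land in $L^{2}([0,T];H^{-2})$ --- both choices of $X_{1}$ serve equally well in Lemma~\ref{lem_AL}.
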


\begin{proof}
We follow the standard idea in treating the NS equation to show that 
\begin{equation}
{\mathbb{P}}u_{t}^{\varepsilon }\ {\text{ is uniformly bounded in }}\ L^{{%
\frac{4}{3}}}([0,T];H^{-1}({\mathbb{R}}^{3})).  \label{u_t}
\end{equation}%
To this end, we apply ${\mathbb{P}}$ to the first equation in (\ref{ACm}) to
obtain 
\begin{equation*}
{\mathbb{P}}u_{t}^{\varepsilon }=\Delta ({\mathbb{P}}u^{\varepsilon })-{%
\mathbb{P}}\left[ (u^{\varepsilon }\cdot \nabla )u^{\varepsilon }\right] -{%
\mathbb{P}}\left[ {\frac{1}{2}}(\nabla \cdot u^{\varepsilon })u^{\varepsilon
}\right] .
\end{equation*}%
From Theorem \ref{thm_energy_m} we know that $u^{\varepsilon }$ is uniformly
bounded in $L^{2}([0,T];H^{1}({\mathbb{R}}^{3}))$, and hence $\Delta ({%
\mathbb{P}}u^{\varepsilon })$ is uniformly bounded in $L^{2}([0,T];H^{-1}({%
\mathbb{R}}^{3}))$. The estimates for the second and the third terms on the
right-hand side of the above equation are quite similar. So we only consider
the second term. From \cite[Lemma2.1]{temam1983book} we know that 
\begin{equation*}
\Vert (u^{\varepsilon }\cdot \nabla )u^{\varepsilon }\Vert _{H^{-1}}\leq
\Vert u^{\varepsilon }\Vert _{L^{2}}^{{\frac{1}{2}}}\Vert u^{\varepsilon
}\Vert _{H^{1}}^{{\frac{3}{2}}}.
\end{equation*}%
Therefore 
\begin{equation*}
\Vert (u^{\varepsilon }\cdot \nabla )u^{\varepsilon }\Vert _{L_{t}^{{\frac{4%
}{3}}}H_{x}^{-1}}\leq \Vert u^{\varepsilon }\Vert _{L_{t}^{\infty
}L_{x}^{2}}^{{\frac{1}{2}}}\Vert u^{\varepsilon }\Vert
_{L_{t}^{2}H_{x}^{1}}^{{\frac{3}{2}}},
\end{equation*}%
which implies (\ref{u_t}), and hence proves the lemma.
\end{proof}

\subsection{Convergence theorem}

We are now in a position to state and prove the main theorem of this section.

\begin{theorem}
\label{thm_main conv} Let $(u^{\varepsilon },p^{\varepsilon })$ be the
solution of the Cauchy problem to system (\ref{ACm}) with initial data $%
(u_{0}^{\varepsilon },p_{0}^{\varepsilon })$ satisfying (\ref{initialcond}).
Assume also that $\varepsilon (t)$ satisfies (\ref{assumponepsilon}). Then
it holds that

\begin{enumerate}
\item[(1)] there exists a $u\in L^{\infty }([0,T];L^2({\mathbb{R}}^3))\cap
L^2([0,T];\dot{H}^1({\mathbb{R}}^3))$ such that 
\begin{equation*}
u^{\varepsilon }\rightharpoonup u\quad {\text {weakly in }}\ L^2([0,T];\dot{H%
}^1({\mathbb{R}}^3)).
\end{equation*}

\item[(2)] the divergence-free part and the gradient part of $u^{\varepsilon
}$ satisfy 
\begin{equation*}
\begin{split}
&{\mathbb{P}}u^{\varepsilon }\to {\mathbb{P}}u=u\quad {\text { strongly in }}%
L^2([0,T];L_{{\text {loc}}}^2({\mathbb{R}}^3)); \\
&{\mathbb{Q}}u^{\varepsilon }\to 0\quad {\text { strongly in }}L^2([0,T];L^p(%
{\mathbb{R}}^3)),\ {\text { for any }}4\le p<6.
\end{split}%
\end{equation*}

\item[(3)] the pressure $p^{\varepsilon }$ will converge in the sense of
distribution. Indeed, 
\begin{equation*}
p^{\varepsilon }\to p=\Delta ^{-1}\nabla \cdot [(u\cdot \nabla )u]\quad {%
\text {in }}\ {\mathcal{D}}^{\prime }.
\end{equation*}
\end{enumerate}

Moreover, $u={\mathbb{P}}u$ is a Leray weak solution to the incompressible
NS equation 
\begin{equation*}
{\mathbb{P}}\left[ u_{t}-\Delta u+(u\cdot \nabla )u\right] =0\quad {\text{in 
}}\ {\mathcal{D}}^{\prime },
\end{equation*}%
and the energy inequality (\ref{energyinequ}) holds.
\end{theorem}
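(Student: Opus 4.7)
\bigskip
\noindent\textbf{Proof Plan for Theorem \ref{thm_main conv}.}

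The plan is to assemble the a priori bounds from Corollary \ref{cor_unif est}, the dispersive estimate from Theorem \ref{thm_est pressure}, the gradient-part convergence from Lemma \ref{lem_conv Q} and the compactness of the divergence-free part from Lemma \ref{lem_conv P} into a passage-to-the-limit argument, in the spirit of \cite{donatelli2006whole} but with the non-autonomous coefficient $\varepsilon(t)$ now accounted for. First, the uniform bounds \eqref{gradu}--\eqref{u} together with Banach--Alaoglu yield a subsequence (still labeled $u^{\varepsilon }$) and a limit $u\in L^{\infty}([0,T];L^{2})\cap L^{2}([0,T];\dot{H}^{1})$ with $u^{\varepsilon }\rightharpoonup u$ weakly. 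Since $\mathbb{Q}$ is a bounded linear operator on $L^{2}$, this gives $\mathbb{P}u^{\varepsilon }\rightharpoonup \mathbb{P}u$ and $\mathbb{Q}u^{\varepsilon }\rightharpoonup \mathbb{Q}u$. Combined with Lemma \ref{lem_conv Q}, uniqueness of weak limits forces $\mathbb{Q}u=0$, i.e.\ $u=\mathbb{P}u$ is divergence-free. Lemma \ref{lem_conv P} then upgrades $\mathbb{P}u^{\varepsilon}\to\mathbb{P}u=u$ to strong convergence in $L^{2}([0,T];L^{2}_{\text{loc}})$, which proves item (2).

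Next I would pass to the limit in the weak formulation of \eqref{ACm} tested against a divergence-free test function $\varphi\in C_{0}^{\infty}([0,T)\times\mathbb{R}^{3})$ with $\nabla\cdot\varphi=0$, which kills the pressure term. The linear terms $\partial_{t}u^{\varepsilon}$ and $-\Delta u^{\varepsilon}$ pass to the limit by the weak convergence in $L^{2}\dot{H}^{1}$. For the correction term ${\frac{1}{2}}(\nabla\cdot u^{\varepsilon})u^{\varepsilon}$, the bound \eqref{correction} together with the strong convergence $\nabla\cdot u^{\varepsilon}\to 0$ from Proposition \ref{prop_conv} shows that it vanishes in the sense of distributions against any $\varphi$. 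The main obstacle is the nonlinearity $(u^{\varepsilon}\cdot\nabla)u^{\varepsilon}$; to handle it I split
\begin{equation*}
(u^{\varepsilon}\cdot\nabla)u^{\varepsilon}
=(\mathbb{P}u^{\varepsilon}\cdot\nabla)u^{\varepsilon}+(\mathbb{Q}u^{\varepsilon}\cdot\nabla)u^{\varepsilon}.
\end{equation*}
On the support of $\varphi$, the first piece is of the form (locally strongly convergent in $L^{2}$) $\times$ (weakly convergent in $L^{2}\dot{H}^{1}$), so it passes to $(u\cdot\nabla)u$ by the standard product-of-strong-and-weak trick. The second piece is controlled by Lemma \ref{lem_conv Q}: $\mathbb{Q}u^{\varepsilon}\to 0$ in $L^{2}L^{p}$ for $4\le p<6$, paired with the $L^{2}\dot{H}^{1}$ bound on $\nabla u^{\varepsilon}$ and Hölder's inequality it vanishes. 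This yields ${\mathbb{P}}\bigl[u_{t}-\Delta u+(u\cdot\nabla)u\bigr]=0$ in $\mathcal{D}^{\prime}$.

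For item (3), I would take the divergence of the momentum equation to obtain
\begin{equation*}
-\Delta p^{\varepsilon}=\nabla\cdot\partial_{t}u^{\varepsilon}-\Delta(\nabla\cdot u^{\varepsilon})+\nabla\cdot\Bigl[(u^{\varepsilon}\cdot\nabla)u^{\varepsilon}+\tfrac{1}{2}(\nabla\cdot u^{\varepsilon})u^{\varepsilon}\Bigr].
\end{equation*}
Using Proposition \ref{prop_conv} to eliminate the first two right-hand terms in the limit and the nonlinear convergence just established, inverting the (non-local but continuous on tempered distributions) Laplacian gives $p^{\varepsilon}\to\Delta^{-1}\nabla\cdot[(u\cdot\nabla)u]$ in $\mathcal{D}^{\prime}$. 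Finally, for the energy inequality, I start from the identity \eqref{energyestm}:
\begin{equation*}
{\frac{1}{2}}\!\int_{\mathbb{R}^{3}}\!\!|u^{\varepsilon}(t)|^{2}\,dx+{\frac{\varepsilon(t)}{2}}\!\int_{\mathbb{R}^{3}}\!\!|p^{\varepsilon}(t)|^{2}\,dx+\int_{0}^{t}\!\!\int_{\mathbb{R}^{3}}\!\!|\nabla u^{\varepsilon}|^{2}\,dxds={\frac{1}{2}}\!\int_{\mathbb{R}^{3}}\!\!\bigl(|u_{0}^{\varepsilon}|^{2}+\varepsilon_{0}|p_{0}^{\varepsilon}|^{2}\bigr)dx.
\end{equation*}
The right-hand side converges to $\tfrac{1}{2}\|u_{0}\|_{L^{2}}^{2}$ by \eqref{initialcond}. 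The $\varepsilon(t)p^{\varepsilon}(t)^{2}$ term is non-negative so may be dropped, while $\|u^{\varepsilon}(t)\|_{L^{2}}^{2}$ and $\|\nabla u^{\varepsilon}\|_{L^{2}L^{2}}^{2}$ are lower semicontinuous under weak convergence; this yields \eqref{energyinequ}. The principal difficulty in the whole scheme is the nonlinear term together with the loss of strong convergence in the gradient direction of $u^{\varepsilon}$ --- this is exactly where the dispersive Strichartz estimate of Theorem \ref{thm_est pressure}, and the assumption \eqref{assumponepsilon} that makes it possible, are indispensable, since without them one cannot control $\mathbb{Q}u^{\varepsilon}$ in a space strong enough to kill $(\mathbb{Q}u^{\varepsilon}\cdot\nabla)u^{\varepsilon}$ when testing against $\varphi$.
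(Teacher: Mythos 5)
Your plan is correct and follows essentially the same route as the paper: items (1) and (2) are read off from Theorem \ref{thm_energy_m}, Corollary \ref{cor_unif est}, and Lemmas \ref{lem_conv Q} and \ref{lem_conv P}, while your treatment of the nonlinearity (splitting via ${\mathbb{P}}$ and ${\mathbb{Q}}$, using the Strichartz-derived decay of ${\mathbb{Q}}u^{\varepsilon }$), the identification of the limit pressure by taking the divergence, and the energy inequality via weak lower semicontinuity is precisely the argument of \cite[Theorem 3.3]{donatelli2006whole} to which the paper defers. You have simply written out the details the paper omits by citation.
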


\begin{proof}
It is easily seen that (1) follows from Theorem \ref{thm_energy_m} and
Corollary \ref{cor_unif est}, and (2) follows from Lemmas \ref{lem_conv Q}
and \ref{lem_conv P}. The proof of (3) and the energy inequality follows the
same way as in the proof of \cite[Theorem 3.3]{donatelli2006whole}, so we omit it here. 
\bibliographystyle{plain}
\bibliography{ACreference}
\end{proof}

\section{Numerical Tests of the New Model}

To test the stability and accuracy of the new model, we perform numerical
tests of the variable timestep algorithm. The tests employ the finite
element method to discretize space, with Taylor-Hood ($\mathbb{P}_{2}/%
\mathbb{P}_{1}$) elements, \cite{G89}. The meshes used for both tests are
generated using a Delaunay triangulation. Finally, the software package
FEniCS is used for both experiments \cite{Al15}.

\subsection{Test 1: Oscillating $\protect\varepsilon(t)$}

We first apply the method to a three-\newline
dimensional offset cylinder problem. Let $\Omega
_{1}=\{(x,y,z):x^{2}+y^{2}<1,0<z<2\}$ and $\Omega
_{2}=\{(x,y,z):(x-.5)^{2}+y^{2}\leq .01,0\leq z\leq 2\}$ be cylinders of
radii 1 and .1 and height 2, respectively. Let then $\Omega =\Omega
_{1}\setminus \Omega _{2}$. Both cylinders and the top and bottom surfaces
are fixed, so no-slip boundary conditions are imposed. A rotational body
force $f$ is imposed, where 
\begin{equation*}
f(x;t):=(-4y(1-x^{2}-y^{2}),4x(1-x^{2}-y^{2}),0)^{T}.
\end{equation*}%
For initial conditions, we let $u(x;0),p(x;0)$ be the solutions to a
stationary Stokes solve at $t=0$. This does not yield a fully developed
initial condition so damped pressure oscillations\ at startup are expected
and observed. For this test, we let $\nu =.001$ and the final time $T=5$. We
let $\varepsilon _{n}=k_{n}$, where $k_{n}$ changes according the function 
\begin{equation*}
\varepsilon(t_n) = k(t_{n}):=%
\begin{cases}
.01 & 0\leq n\leq 10 \\ 
.01+.002\sin {(10t_{n})} & n>10.%
\end{cases}%
\end{equation*}%
The first plots in Figure \ref{fig:norms} below track the velocity and
pressure $L^{2}$\ norms over the duration of the simulation. After an
initial spike (typical of artificial compression methods with poorly
initialized pressures), the velocity and pressure stabilize. The vertical
axes of $||u_{h}||$ and $||p_{h}||$ are on a logarithmic scale. The variable 
$\varepsilon $, velocity, and pressure are all clearly stable. 
\begin{figure}[tbp]
\centering
\begin{subfigure}{.49\linewidth}
   \centering
   \includegraphics[width = 1\linewidth]{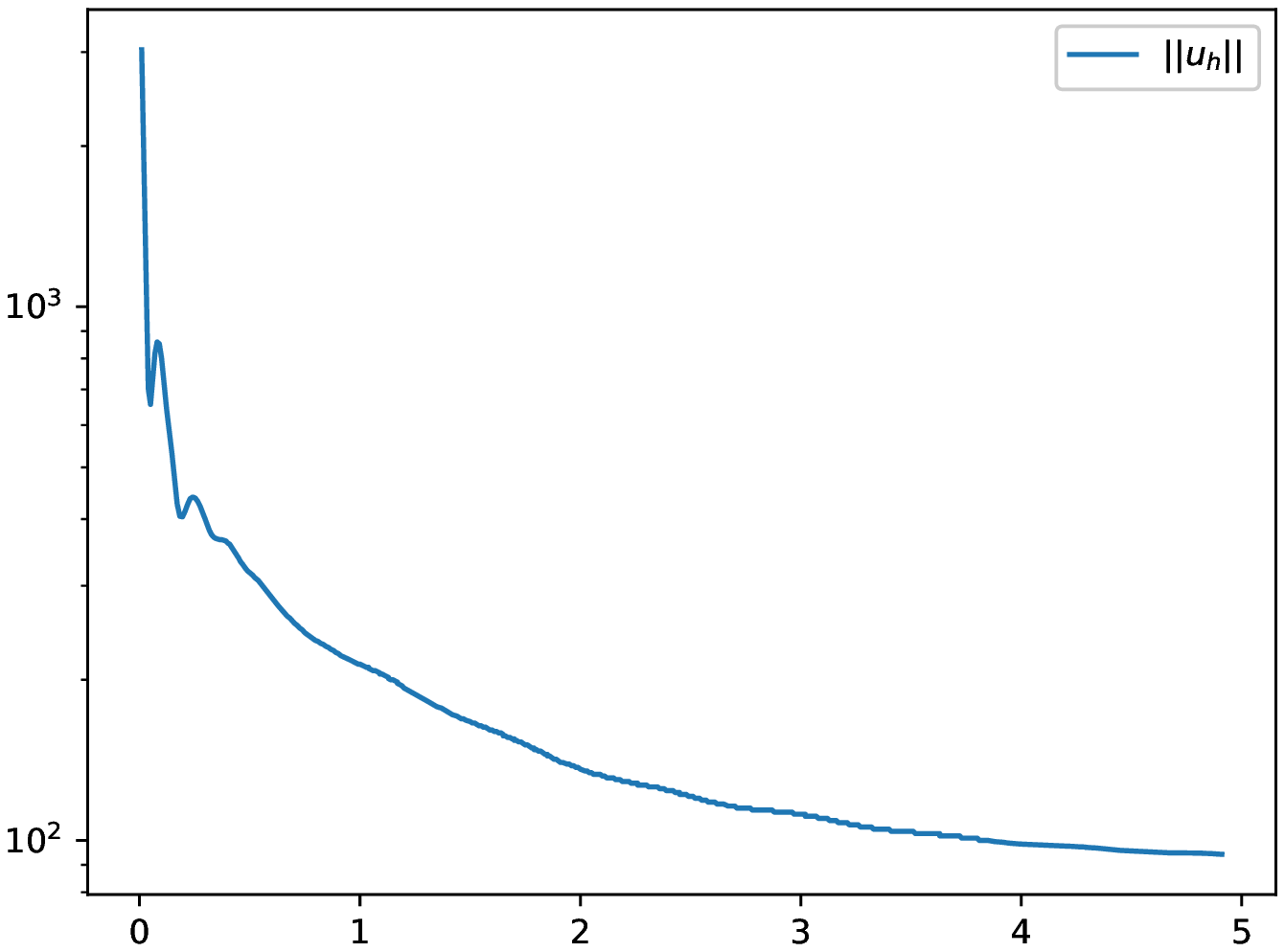}
   \caption{}
\end{subfigure}
\centering
\begin{subfigure}{.49\textwidth}
   \centering
   \includegraphics[width = 1\linewidth]{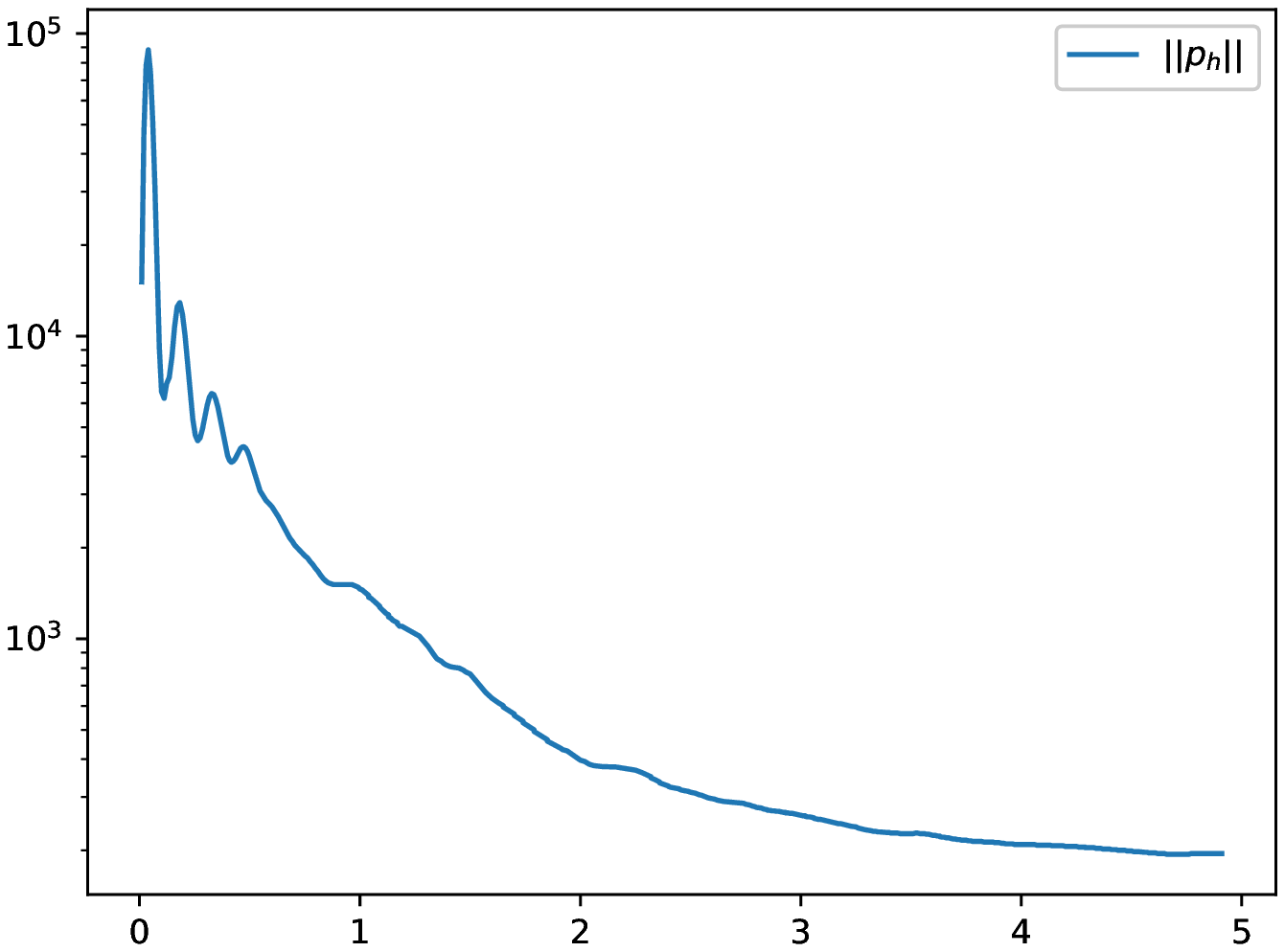}
   \caption{}
\end{subfigure}
\caption{Velocity and pressure norms over time $t$.}
\label{fig:norms}
\end{figure}
In Figure \ref{fig:cylinders}, we give plots of velocity magnitude at times $%
t=1,2,3,4$ on $\Omega $ at five cross-sections of $\Omega $. 
\begin{figure}[tbp]
\centering
\begin{subfigure}{.49\linewidth}
   \centering
   \includegraphics[width = \linewidth]{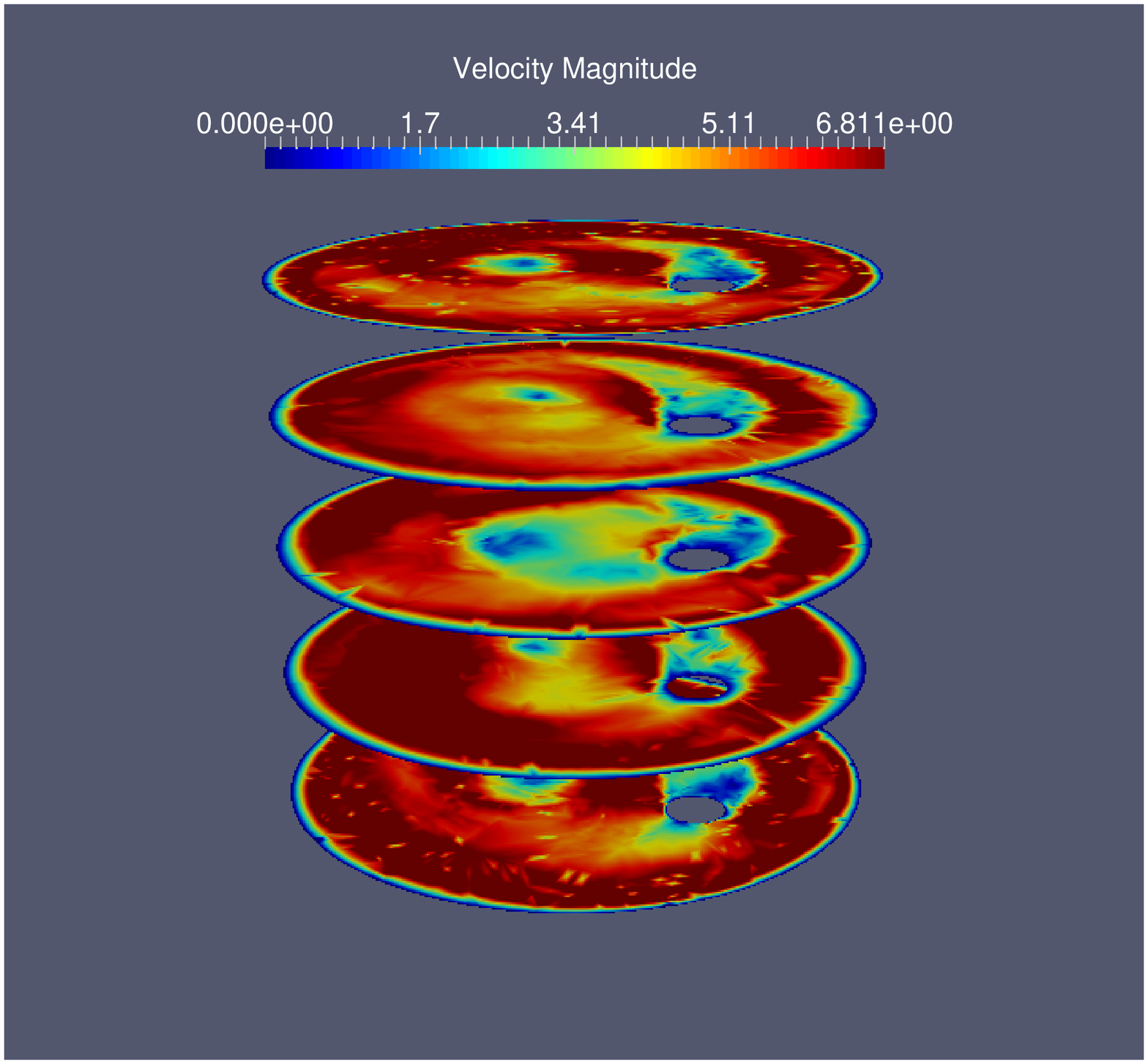}
   \caption{$t=1$}
\end{subfigure}
\centering
\begin{subfigure}{.49\textwidth}
   \centering
   \includegraphics[width = \linewidth]{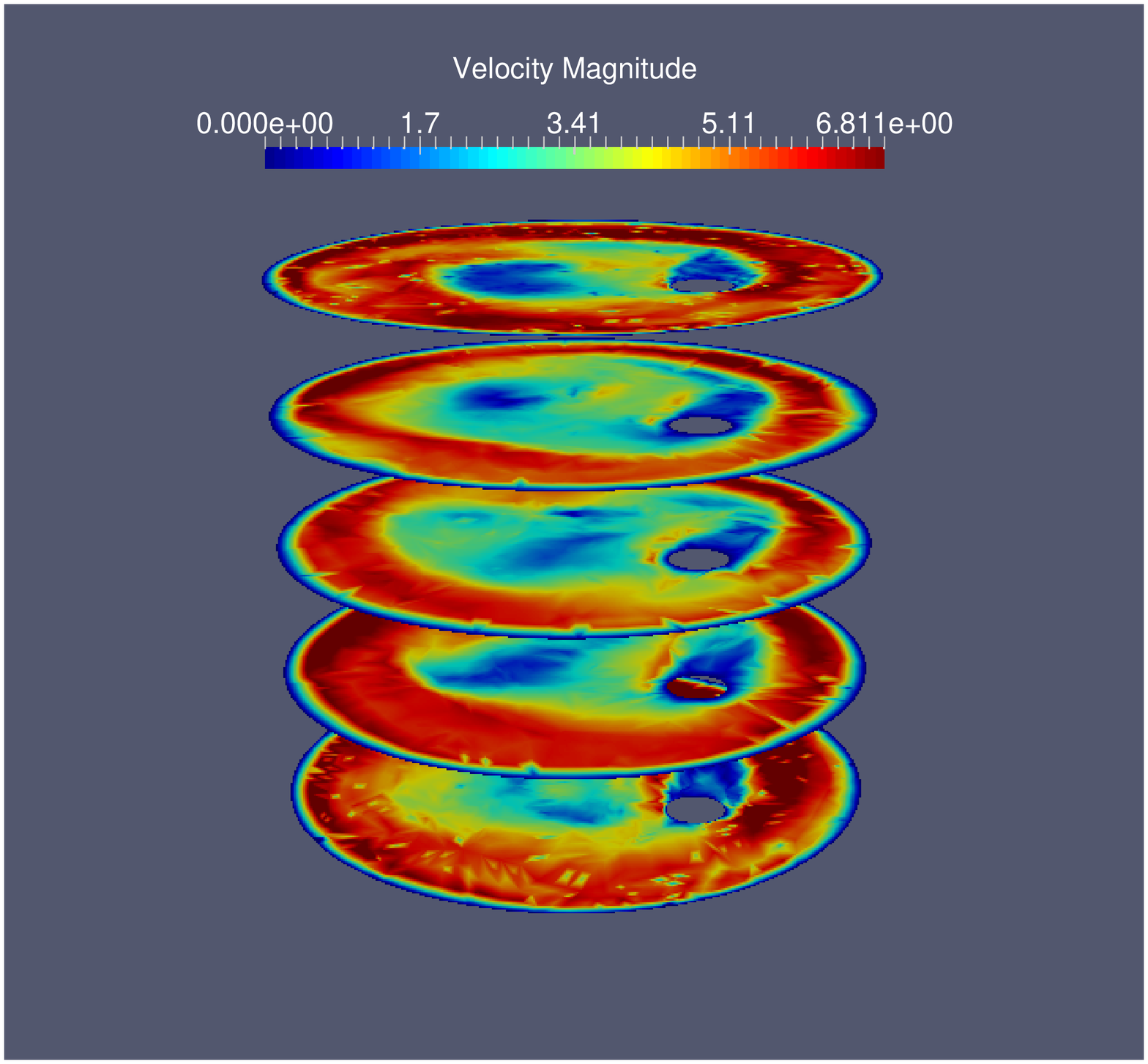}
   \caption{$t=2$}
\end{subfigure}
\end{figure}
\begin{figure}[tbp]
\begin{subfigure}{.49\linewidth}
   \centering
   \includegraphics[width = \linewidth]{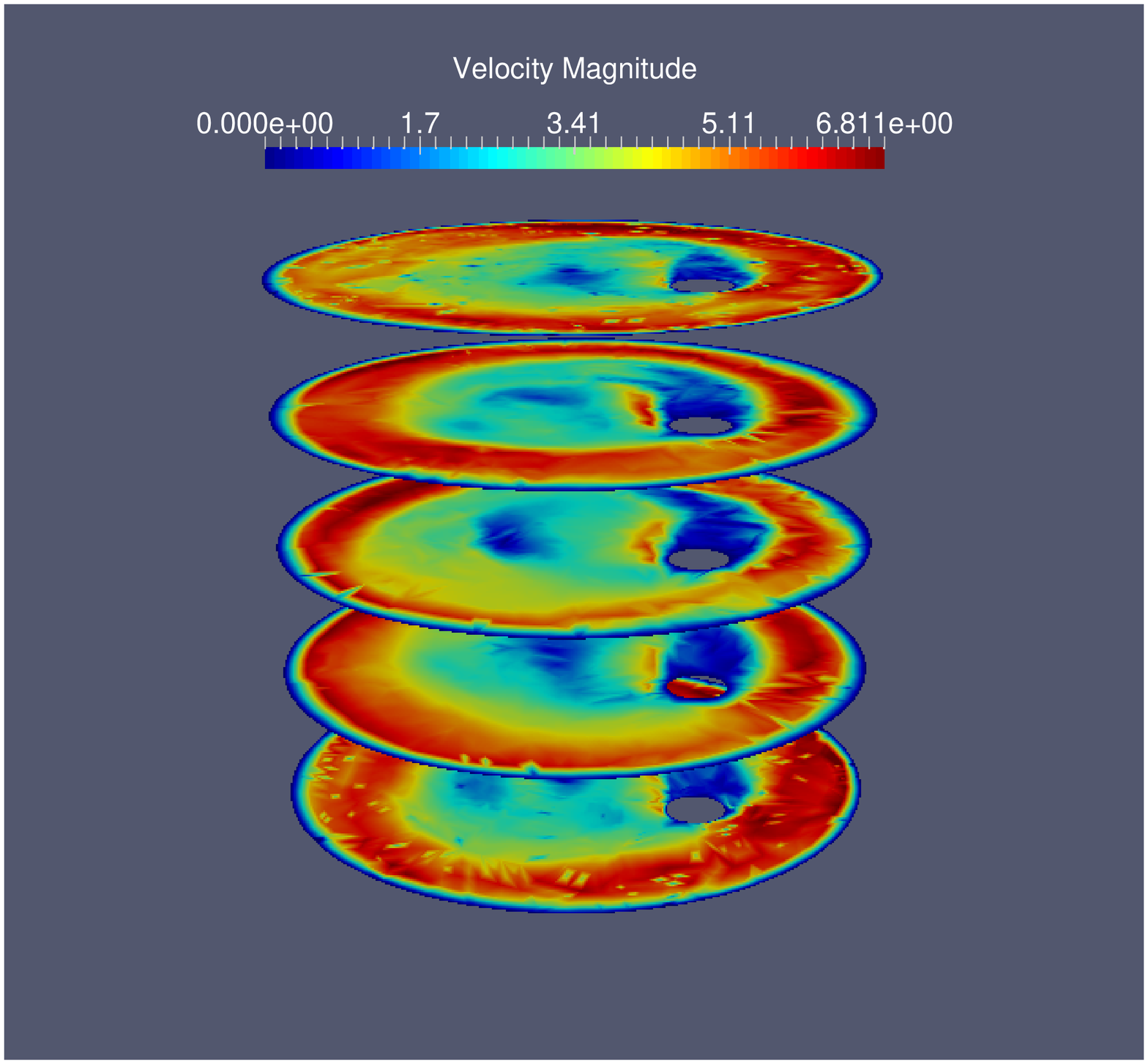}
   \caption{$t=3$}
\end{subfigure}
\centering
\begin{subfigure}{.49\textwidth}
   \centering
   \includegraphics[width = \linewidth]{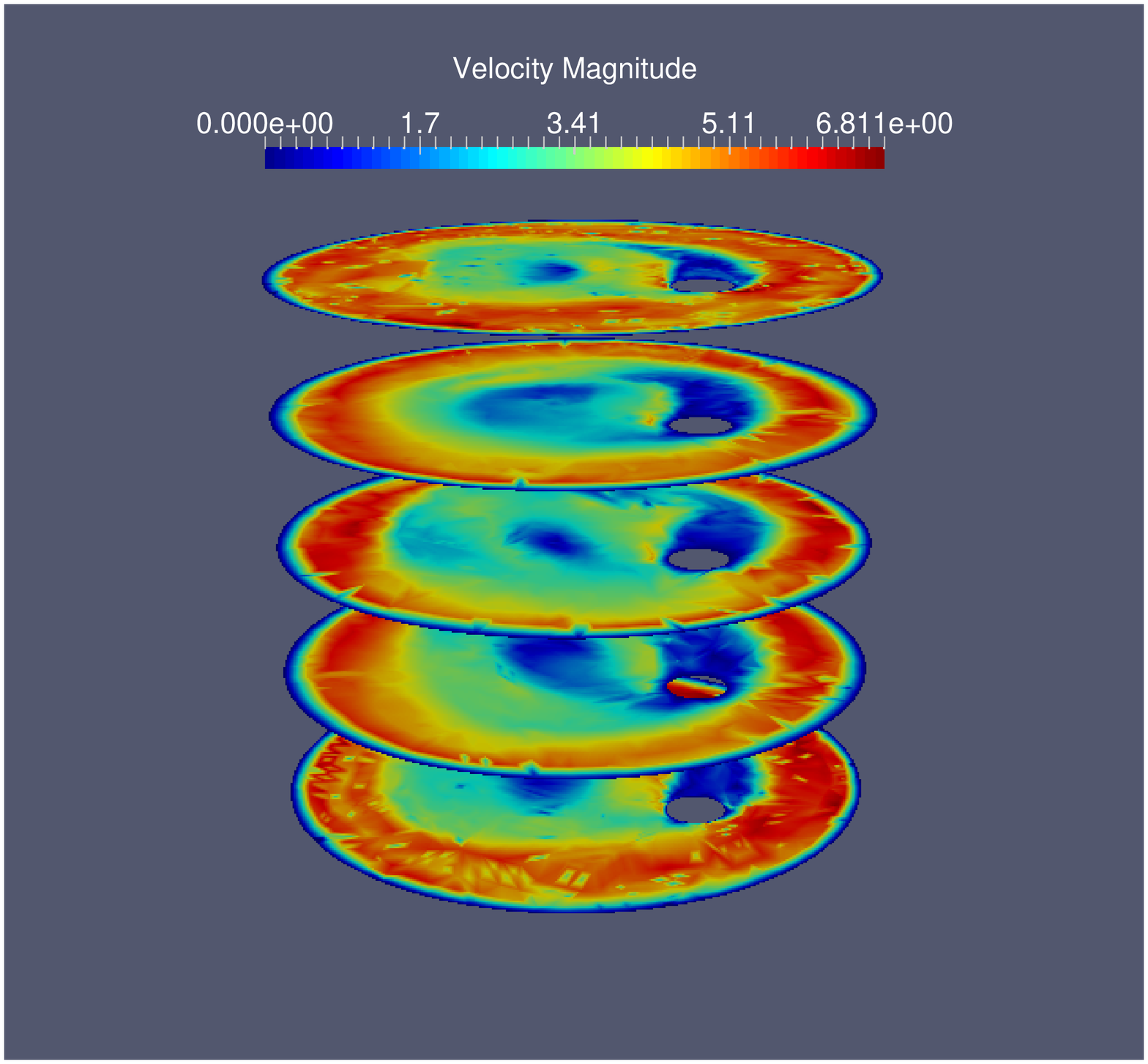}
   \caption{$t=4$}
\end{subfigure}
\caption{Velocity magnitude at different $t$.}
\label{fig:cylinders}
\end{figure}

\subsection{Test 2: Adaptive, Variable $\protect\varepsilon(t)$}

The next test investigates self-adaptive variation of $\varepsilon _{n}$ and
the resulting accuracy. We now consider a two-dimensional flow over $\Omega
=~]0,1[^{2}$ with the exact solution 
\begin{gather*}
u(x,y;t):=\sin (t)(\sin (2\pi x)\sin ^{2}(2\pi x),\sin (2\pi x)\sin
^{2}(2\pi y))^{T}, \\
p(x,y;t):=\cos (t)\cos (\pi x)\sin (\pi y)
\end{gather*}%
and corresponding body force $f$. We let $\nu =1$, the final time $T=1$, $%
\varepsilon _{n}=k_{n}$, and $k_{0}=.001$. To adapt the timestep (and
generate $k_{n}$), we employ a halving-and-doubling technique using $%
||\nabla \cdot {u_{h}}||$ as the estimator. We let the tolerance interval be 
$(.001,.01)$ (If $||\nabla \cdot {u_{h}}||<0.001$, $k_{n}$ and $\varepsilon
_{n}$ are doubled, while if $||\nabla \cdot {u_{h}}||>0.01$, the two are
halved and the step is repeated). This procedure does not control the local
truncation error, only the violation of incompressibility.

The plots in Figure \ref{fig:test2} show the velocity and pressure errors,
as well as the fluctuation of $k_n$ and $\nabla\cdot{u}$, over time. We see
that the errors of both the velocity and pressure fluctuate with changes in
the timestep, as does the divergence. 
\begin{figure}[]
\centering
\begin{subfigure}{.49\linewidth}
   \centering
   \includegraphics[width = 1\linewidth]{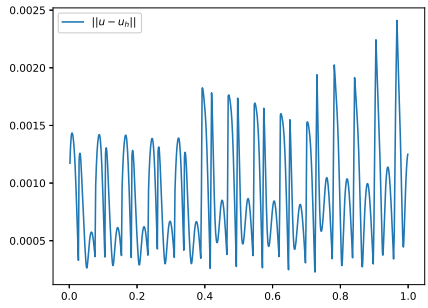}
   \caption{Velocity error}
\end{subfigure}
\centering
\begin{subfigure}{.49\textwidth}
   \centering
   \includegraphics[width = 1\linewidth]{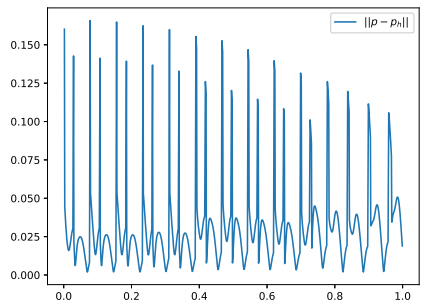}
   \caption{Pressure error}
\end{subfigure}
\begin{subfigure}{.49\linewidth}
   \centering
   \includegraphics[width = 1\linewidth]{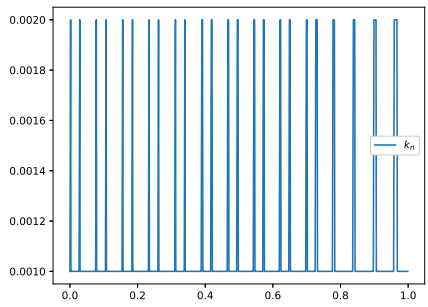}
   \caption{Timestep evolution}
\end{subfigure}
\centering
\begin{subfigure}{.49\textwidth}
   \centering
   \includegraphics[width = 1\linewidth]{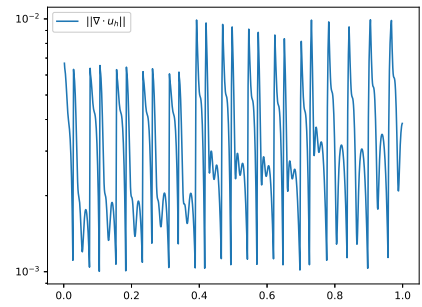}
   \caption{Divergence evolution}
\end{subfigure}
\caption{Accuracy and adaptability results.}
\label{fig:test2}
\end{figure}
Figure \ref{fig:test2}(a) shows that the velocity error is reasonable but
does grow (slowly), consistent with separation of trajectories of the
Navier-Stokes equations. Figure \ref{fig:test2}(d) shows $||\nabla \cdot {%
u_{h}}||$ is controlled. Figure \ref{fig:test2}(b) shows the pressure error
actually decreases. Figure \ref{fig:test2}(c) shows that the evolution of $%
k_{n}$, and therefore $\varepsilon _{n}$, is not as smooth as required by
condition (\ref{assumponepsilon}). Nevertheless, the simulation produced
approximations of reasonable accuracy.

\section{Conclusions and future prospects}

Slightly compressible fluids models provide a basis for challenging
numerical simulations. Efficiency and especially time accuracy in such
simulations require variable timestep and thus variable $\varepsilon
=\varepsilon (t)$. Variable $\varepsilon $ is beyond existing mathematical
foundations for slightly compressible models. The method and associated
continuum model considered herein is modified from the standard one for
variable $\varepsilon $, has been proven to be stable and converge to a weak
solution of the incompressible Navier-Stokes equations as $\varepsilon
(t)\rightarrow 0$, $\varepsilon _{t}(t)\rightarrow 0$ and $\varepsilon
_{tt}(t)\rightarrow 0$ provided $\varepsilon _{t}(t),\varepsilon
_{tt}(t)\leq C\varepsilon (t)^{1+\delta }$. The analysis of the long time
stability of the standard method and model for variable $\varepsilon
=\varepsilon (t)$ is an open problem with no clear entry point for its
analysis (Section 2). The fluctuation condition $\varepsilon
_{t}(t),\varepsilon _{tt}(t)\leq C\varepsilon (t)^{1+\delta }$ we require is
too strong for practical computation. Proving convergence to an NSE weak
solution under a relaxation of the condition is an important open problem. 
Preliminary numerical tests in Section 5 with halving and doubling (which
does not satisfy the condition) suggest that the condition $\varepsilon
_{t}(t),\varepsilon _{tt}(t)\leq C\varepsilon (t)^{1+\delta }$ should be
improvable. Other open questions include convergence of flow quantities
(e.g., vorticity, lift, drag, energy dissipation rates, $Q$-criterion values
and so on) to their incompressible values as $\varepsilon (t),\varepsilon
_{t}(t)\cdot \cdot \cdot \rightarrow 0$, derivation of the rates of
convergence for strong solutions and extension of the analysis herein.

\end{document}